\documentclass[12pt]{article}
\usepackage{amssymb,amsmath}
\input{amssym.def}\input{amssym}
\usepackage{graphicx, tikz}

\headsep-0.5in
\oddsidemargin+0.1cm \evensidemargin+1cm
\textheight=9.3in
\textwidth=6.7in

\newcommand{\C}{{\mathbb C}}
\newcommand{\D}{{\mathbb D}}

\newcommand{\N}{{\mathbb N}}

\newcommand{\R}{{\mathbb R}}

\newcommand\cB{{\mathcal B}}
\newcommand\cC{{\mathcal C}}

\newcommand\cH{{\mathcal H}}

\newcommand\cN{{\mathcal N}}

\newcommand\cW{{\mathcal W}}
\newcommand\cY{{\mathcal Y}}
\newcommand\cP{{\mathcal P}}
\newcommand\cS{{\mathcal S}}


\newcommand\bD{{\mathbb D}}
\newcommand\bE{{\mathbb E}}

\newcommand\bN{{\mathbb N}}

\newcommand\bR{{\mathbb R}}

\newcommand\bT{{\mathbb T}}

\newcommand\Id{{\bf 1}}

\newtheorem{theorem}{Theorem}[section]
\newtheorem{prop}{Proposition}[section]

\newtheorem{lemma}{Lemma}[section]
\newtheorem{remark}{Remark}[section]
\newtheorem{definition}{Definition}[section]
\newenvironment{proof}{\noindent {\bf Proof.}}{ \hfill $\Box$\\ }
\newenvironment{proofof}[1]{\noindent {\bf Proof of #1.}}{ \hfill $\Box$\\ }

\newcommand\ve{\varepsilon}

\newcommand\vf{\varphi}

\title{Regular variation and rates of mixing for infinite measure preserving almost Anosov diffeomorphisms}
\author{Henk Bruin
\thanks{Faculty of Mathematics, University of Vienna, 
Oskar Morgensternplatz 1, 1090 Vienna, Austria; {\it henk.bruin@univie.ac.at}}
 \and Dalia Terhesiu
\thanks{Department of Mathematics, Harrison Building Streatham Campus, 
University of Exeter, North Park Road, Exeter EX4 4QF, UK; {\it daliaterhesiu@gmail.com}}
}
\date{January 2018}

\begin{document}

\maketitle

\abstract{The purpose of this paper is to establish mixing rates for infinite measure preserving almost Anosov diffeomorphisms
on the two-dimensional torus. The main task is to establish regular variation of the tails
of the first return time to the complement of a neighbourhood of the neutral fixed point.
}

\section{Introduction}
Almost Anosov diffeomorphisms
on the two-dimensional torus were introduced in~\cite{HY95, Hu00}  
where it was shown that, depending on the local dynamics near the (for simplicity single)
neutral fixed point, there is a finite or infinite SRB measure.
The recent paper \cite{HuZhangprepr} provides polynomial mixing rates,
in the finite measure setting and with bounds on the exponent rather than a precise exponent,
within a certain parameter range, see Remark~\ref{rem:HZ}.
In \cite{HuZhangprepr}, the existence of a Markov partition, a first return
inducing scheme, quotient dynamics (i.e., they factor out the stable direction),
and the by now well-established renewal theory \cite{Sarig02, Gouezel04} are used.

In this paper we prove mixing results for almost Anosov diffeomorphisms on the torus,
with an infinite SRB measure. The class of maps is somewhat wider (see \eqref{eq:diffeo} below),
but more importantly, using an entirely different method from \cite{Hu00, HuZhangprepr}, we are able to
establish much more precise tail estimates for the inducing scheme: there is $C_0>0$ such that
\begin{equation}\label{eq:tail}
\mu(\varphi > n) =  C_0 n^{-\beta} (1+o(1))
\end{equation}
with the more precise form the $o(1)$ given in Theorems~\ref{thm:regvarmu} and \ref{thm:regvarmu_special}.
We say that the sequence $\mu(\varphi > n)$ is regularly varying with index $\beta$ with 
involved slowly varying function $\ell(n) = C_0$.
These estimates hold for the entire parameter range $\{ a_0,a_2,b_0,b_2 > 0, a_0b_2-a_2b_0 \neq 0\}$,
see formula~\eqref{eq:diffeo}, so both for the finite measure ($\beta  1$)
and the infinite measure case ($\beta \leq 1$).
In the finite measure setting we limit ourselves to a few remarks, because the equality in \eqref{eq:tail}
 is not needed to obtain mixing rates or limit laws (including $\beta$-stable laws for $\beta \in (1,2]$). 
In this paper, we focus our attention on the infinite measure setting where regular variation of the 
tail of $\varphi$ is crucial to obtain mixing results
as presented in Theorems~\ref{thm:mix1}, \ref{thm:mix2} and \ref{thm:mix3}.

A way of obtaining mixing rates for finite measure preserving invertible Markov systems is
to collapse stable leaves, work on the quotient space and transfer the results back to
the original system.
In the infinite measure setting this strategy gives mixing, but doesn't seem to work
for mixing rates (that is, higher order of the correlation function, \cite{MT}, see also 
Theorem~\ref{thm:mix2} below for an illustration of this notion): see \cite{Mel15}.

For this reason we use, as in \cite{LT},
anisotropic Banach spaces of distributions where the transfer operator of the induced map acts,
and we establish the required spectral properties directly.
More precisely, we show that a slightly modified version
of the Banach space considered in~\cite{LT} (a simplification of~\cite{DemersLiverani08}) can be used to obtain optimal rates 
of mixing for almost Anosov diffeomorphisms preserving an infinite SRB measure studied in~\cite{Hu00}.
\\[3mm]
{\bf Acknowledgements:} Both authors are grateful to Viviane Baladi for many inspiring 
discussions and careful reading of a previous version of this text.
DT would like to thank CNRS for enabling her a three month visit to
IMJ-PRG, Pierre et Marie Curie University.
HB would like to thank Freddy Dumortier for his explanations of \cite{DRR}
and related literature.
He would also like to thank A\"OU (Aktion \"Osterreich-Ungarn) 
and P\'eter Imre Toth for discussions during a visit to Budapest funded by A\"OU project 92\"ou6.
Finally, both authors would like to thank the Erwin Schr\"odinger Institute
where the final version of this paper was prepared during a ``Research in Teams'' project.

\subsection{Set-up}\label{sec:set-up}

\begin{definition}{~\cite[Definition 1]{Hu00}}
\label{def-AlmAn} 
Let $\bT^2$ be te two-dimensional torus.
A diffeomorphism $f:{\bT}^2\to {\bT}^2$ is called {\em almost Anosov} if there exists two continuous 
families of non-trivial cones $x\to\cC_x^u, \cC_x^s$ such that except for a finite set $S$,
\begin{itemize}
\item[i)] $D f_x\cC_{x}^u\subseteq \cC_{f(x)}^u$ and $D f_x\cC_{x}^s\supseteq \cC_{f(x)}^s$;
\item[ii)] $|D f_x v|>|v|$ for any $0 \neq v\in \cC_x^u$ and $|D f_x v|<|v|$ for any $0 \neq v\in \cC_x^s$.
\end{itemize}
For $x \in S$, $Df_x = I$ is the identity. 
\end{definition}

\begin{remark}\label{rem:add_def}
 In our setting the families of cones will be transversal, i.e., $\cC_x^u \cap \cC_x^s = \{ 0 \}$
 at every $x \in {\bT}^2$, unlike the systems studied in \cite{LM05}
 where $Df_p = \begin{pmatrix} 1 & 1 \\ 0 & 1 \end{pmatrix}$ at the neutral fixed point $p$
 and the corresponding stable and unstable manifolds are in fact tangent.
\end{remark}

The starting point is an almost Anosov Markov diffeomorphism $f:{\bT}^2 \to {\bT}^2$, 
with a single neutral fixed point $p$
and $f$ is $C^{\kappa+2}$-smooth (with even $\kappa \in \N$ fixed), except possibly at
the local stable and unstable manifolds $W^{u/s}(p)$ where only $C^1$ 
smoothness\footnote{but on eather side of $W^{u/s}$ we can extend $f$ to $W^{u/s}$ in a $C^{\kappa+2}$ manner} 
is assumed.
Let $\{ P_i \}_{i = 0}^k$ be the Markov partition for $f$ (which we can assume to exist since $f$ is a local
perturbation from a Anosov diffeomorphism on $\bT^2$).
We assume that $p$ belongs to the interior of $P_0$ and use a system of coordinates in which $p$ is the origin, and
the diffeomorphism $f(x,y)$ has the local form
\begin{equation}\label{eq:diffeo}
\left(x(1+a_0x^\kappa+a_2y^\kappa + O( |(x,y)|^{\kappa+1})), 
y(1-b_0x^\kappa-b_2y^\kappa + O( |(x,y)|^{\kappa+1})) \right),
\end{equation}
on a neighbourhood $U \supset P_0$. 
Here $a_0, a_2, b_0, b_2$ are positive real constants with $\Delta := a_2b_0 - a_0b_2 \neq 0$.
(This notation is taken from \cite{Hu00}; the absence of mixed terms and coefficients
$a_1, b_1$ is explained further in Remark~\ref{rem:mixed}.)
Then the horizontal and vertical axes are the unstable and stable manifolds of $p$ respectively.
We assume that the Markov partition element $P_0 \subset U$ is a small rectangle such that 
$\overline{f^{-1}(P_0) \cup P_0 \cup f(P_0)} \subset U$.
Due to the symmetries $(x,y) \mapsto (\pm x, \pm y)$,
it suffices to do the analysis only in the first quadrant 
$Q = [0,\zeta_0] \times [0,\eta_0]$ of $P_0$, see Figure~\ref{fig:leaves}.
Without loss of generality (see Lemma~\ref{lem:straight})
we can think of $[0, \zeta_0] \times \{ \eta_0 \}$ as a local unstable leaf
and $\{ \zeta_0\} \times [0, \eta_0]$ as a local stable leaf of the global diffeomorphism.

\begin{figure}[ht]
\begin{center}
\begin{tikzpicture}[scale=1.5]
\node at (4.3,-0.1) {\small $x$}; \node at (2.8,-0.2) {\small $\zeta_0$}; \node at (4,-0.2) {\small $\zeta_1$}; 
\node at (-0.1,4.3) {\small $y$}; \node at (-0.2,2.8) {\small $\eta_0$};\node at (-0.2,4) {\small $\eta_1$};
\draw[->, draw=red] (-0.5,0)--(4.4,0);
\draw[->, draw=blue] (0,-0.5)--(0,4.4);
\node at (0.73,3.4) {\tiny $\{ \varphi = n\}$};
\node at (3.4,0.5) {\tiny $F(\{\varphi = n\})$};
\node at (1.5,1.5) {$Q$};
\node at (1.5,2.7) {\tiny $W^u$}; \node at (1.2,4) {\tiny $f^{-1}(W^u)$};
\node at (2.98,1.3) {\tiny $W^s$}; \node at (4.3,0.8) {\tiny $f(W^s)$};
%
\draw[-, draw=red] (0,4)--(0.8,4); \draw[-, draw=blue] (4,0)--(4,0.8); 
\draw[-, draw=blue] (0.1,4) .. controls (0.11,2.5) and (0.4,1.5) .. (0.5,-0.2);
\draw[-, draw=blue] (0.2,4) .. controls (0.21,2.5) and (0.5,1.5) .. (0.6,-0.2);
\draw[-, draw=blue] (0.3,4) .. controls (0.31,2.5) and (0.6,1.5) .. (0.7,-0.2);
\draw[-, draw=blue] (2.8,2.8)--(2.8,-0.0);
\draw[-, draw=red] (4, 0.1) .. controls (2.5, 0.11) and (1.5, 0.4) .. (-0.2, 0.5);
\draw[-, draw=red] (4, 0.2) .. controls (2.5, 0.21) and (1.5, 0.5) .. (-0.2, 0.6);
\draw[-, draw=red] (4, 0.31) .. controls (2.5, 0.32) and (1.5, 0.61) .. (-0.2, 0.72);
\draw[-, draw=red] (2.8,2.8)--(-0.0,2.8);
\end{tikzpicture}
\caption{The first quadrant $Q$ of the rectangle $P_0$, with stable and unstabe foliations
drawn vertically and horizontally, respectively.}
\label{fig:leaves}
\end{center}
\end{figure}
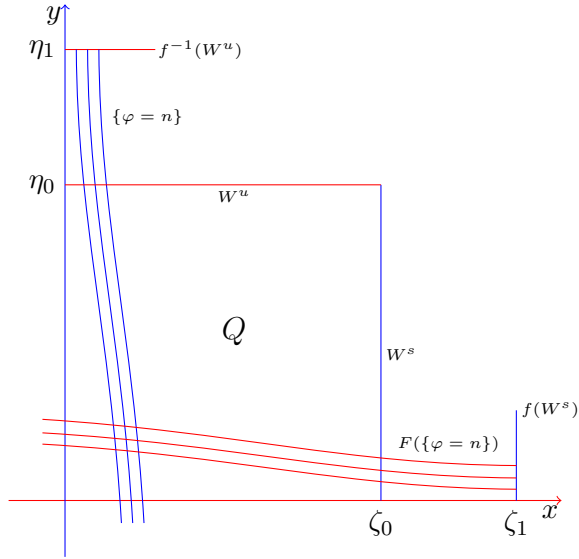 

We consider an induced map $F = f^\varphi: Y \to Y$ for $Y := {\bT}^2 \setminus P_0$, where
$$
\varphi(z) = \min\{ n \geq 1 : f^n(z) \notin P_0\}
$$
is the first return time to $Y$. Note that $F$ is invertible because $f$ is.
In the first quadrant of $U \setminus P_0$, $\{ \varphi = n \} := \{ z \in f^{-1}(Q) \setminus Q : \varphi(z) = n\}$, 
$n \geq 2$, are vertical strips 
adjacent to the local unstable leaf $[0,\zeta_0] \times \{\eta_0 \}$, and 
converging to $\{ 0 \} \times [\eta_0, \eta_1]$
as $n \to \infty$. The images
$F(\{ \varphi = n\})$ are horizontal strips, adjacent to the local stable leaf $\{ \zeta_0 \} \times [0,\eta_0]$, 
and converging to $[\zeta_0, \zeta_1] \times \{ 0 \}$ as $n \to \infty$,
see Figure~\ref{fig:leaves}.

In contrast to $f$, the induced map $F$ is uniformly hyperbolic, but only piecewise continuous.
Indeed, continuity fails at the boundaries of the strips $\{ \varphi = n\}$, $n \geq 2$,
but these boundaries are local stable and unstable leaves,
and it is possible to create a countable Markov partition refining
$\{ P_i \}_{i=1}^k$ of $Y$ for $F$,
in which all the strips $\{ \varphi = n \}$ are partition elements.

The map $F$ preserves an SRB measure $\mu$ and for every maximal unstable leaf $W^u$ inside
$F(P_i)$ for some $i$, the conditional measure
$\mu_{W^u}$ is absolutely continuous w.r.t.\ the conditional Lebesgue measure $m_{W^u}$,
see \cite[Lemma 5.2]{HY95}.
In fact, the density $h_{W^u} := \frac{d\mu_{W^u}}{dm_{W^u}}$ is bounded, bounded away 
from zero and (using a straightforward adaptation of \cite[Proposition 3.1]{HY95})
$C^{\kappa+1}$ smooth.

Our goal is to estimate the tails
$$
\mu(\varphi > n) := \mu(\{z \in f^{-1}(Q) \setminus Q: \varphi(z) > n\}).
$$
The $f$-invariant pullback measure $\mu_f(A) = \sum_{n\geq 0} \mu(f^{-n}(A) \cap \{\varphi > n\})$
is finite if and only if $\sum_n \mu(\varphi > n)< \infty$.
When later on we speak of the (in)finite measure case, this refers to $\mu_f$ being (in)finite,
since $\mu$ itself is always finite, and scaled to be a probability measure.

To estimate $\mu(\varphi > n)$, we first estimate $m(\varphi > n)$ 
(in Proposition~\ref{prop:regvar})
assuming that locally, $f$ is the time-$1$ map
of the flow of the differential equation
\begin{equation}\label{eq:vfhot}
\begin{cases}
\dot x = x(a_0x^\kappa+a_2 y^\kappa + O(|(x,y)|^{\kappa+1}) ), \\
\dot y =  -y(b_0x^\kappa+b_2y^\kappa + O(|(x,y)|^{\kappa+1}) ),
\end{cases}
\end{equation}
where $a_0, a_2, b_0, b_2 > 0$ with $\Delta = a_2b_0 - a_0b_2 \neq 0$. are as before.
This approach of replacing a map by the time-$1$ map of a flow, in the context of billiards, 
was followed in \cite{K79, Machta83, PSZ16} and \cite[Section 6]{BCD11}
but only insofar that the time-$1$ map of the flow approximates the map.
In general, it is unlikely that  on the whole manifold a diffeomorphism exactly coincides with
the time-$1$ map of a vector field, see e.g.\ \cite{Palis}.
However, due to \cite[Theorem B and consequence 1(ii) on page 36)]{DRR}, inside the neighbourhood $U$ 
this holds for $C^\infty$ diffeomorphisms. 
Proposition~\ref{prop:perturb} yields the regular variation of
the tails w.r.t.\ Lebesgue measure in this $C^\infty$ setting.
For $C^{\kappa+2}$ diffeomorphisms $f$, there seems to be no general result that
$f$ is the time-$1$ map of a $C^{\kappa+1}$ vector field 
(\cite{DR} gives results depending on the smoothness of the vector field). 
Therefore, we first perturb $f$
to a $C^\infty$ diffeomorphism, next 
use the asymptotics obtained from the vector field,
and finally interpret the results for $f$ using an extra approximation argument,
see Lemma~\ref{lem:xi}.
The fact that the conditional densities $h_{W^u}$ are smooth finally
allows us to transfer the Lebesgue estimates to estimates in terms of $\mu$. 

\subsection{Main results}\label{sec:mainresults}

\begin{theorem}\label{thm:regvarmu}
Let $f$ be a $C^{\kappa+2}$ diffeomorphism (with even $\kappa \in \N$) on the torus of the form \eqref{eq:diffeo}
near $(0,0)$. Then the tails $\mu(\varphi > n)$ have regular variation:
$$
\mu(\varphi > n) = C_0 n^{-\beta} (1 + O(\max \{ n^{-\beta_*}, n^{-1}\log n\}) ),
$$
where $\beta := \frac{a_2+b_2}{\kappa b_2}$, 
$\beta_* := \frac{1}{\kappa}\min\left\{1,\frac{a_2}{b_2}, \frac{b_0}{a_0}\right \} \leq \min\{ \beta, 1/\kappa\}$
and $C_0>0$ is a constant given at the end of the proof.
In particular, $\mu_f$ is infinite if and only if $\beta \leq 1$.
\end{theorem}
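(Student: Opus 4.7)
The plan is to follow the four-stage programme sketched at the end of Section~\ref{sec:set-up}: first obtain the tail asymptotic for Lebesgue measure $m$ by approximating $f$ locally by the time-$1$ map of the vector field \eqref{eq:vfhot}, and then transfer the result to the SRB measure $\mu$ using the fact that the conditional densities $h_{W^u}$ along unstable leaves are bounded and smooth.

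First I would handle the Lebesgue tails in the $C^\infty$ case. By the result of \cite{DRR} cited in the excerpt, a $C^\infty$ diffeomorphism of the form~\eqref{eq:diffeo} coincides on $U$ with the time-$1$ map of a $C^\infty$ vector field of the form~\eqref{eq:vfhot}. Proposition~\ref{prop:regvar} then yields the required regularly varying asymptotic $m(\varphi>n)=\tilde C_0\, n^{-\beta}(1+O(\cdot))$ for the time-$1$ map of the flow, and Proposition~\ref{prop:perturb} extends this to $C^\infty$ diffeomorphisms of the form~\eqref{eq:diffeo}. For a general $C^{\kappa+2}$ diffeomorphism $f$, I would then mollify $f$ to a $C^\infty$ diffeomorphism with the same $(\kappa+1)$-jet at $p$ and apply Lemma~\ref{lem:xi}, which controls the perturbation of the tail induced by such an approximation, absorbing its contribution into the stated error term.

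Second, I would transfer from $m$ to $\mu$. The set $\{\varphi>n\}\cap Q$ is a vertical strip whose width tends to zero as $n\to\infty$, collapsing onto $\{0\}\times[\eta_0,\eta_1]$. Disintegrating $\mu$ along the unstable leaves $W^u\subset F(P_i)$, one has
$$
\mu(A)=\int\left(\int_{W^u\cap A}h_{W^u}\,dm_{W^u}\right)d\nu(W^u)
$$
for the transverse factor measure $\nu$. Because each $h_{W^u}$ is bounded between positive constants and $C^{\kappa+1}$-smooth, restricting to the thin strip $\{\varphi>n\}$ and Taylor-expanding $h_{W^u}$ transverse to the local stable manifold of $p$ gives $\mu(\varphi>n)=C_0\, m(\varphi>n)\bigl(1+O(n^{-1/\kappa})\bigr)$, with $C_0$ an explicit integral of the limiting density along the unstable foliation. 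Since $\beta_*\leq 1/\kappa$, the factor $O(n^{-1/\kappa})$ is dominated by $O(n^{-\beta_*})$ and is absorbed into the stated error. Combining with the Lebesgue asymptotic gives the theorem.

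The main obstacle is the Lebesgue estimate inside the flow, packaged in Proposition~\ref{prop:regvar}: the precise exponent $\beta_*=\frac{1}{\kappa}\min\{1,a_2/b_2,b_0/a_0\}$ and the borderline logarithmic term $n^{-1}\log n$ reflect the slowest of several competing rates at which an orbit separates from the stable manifold as it spends time $\sim n$ near the degenerate saddle $p$, and pinning them down requires a careful asymptotic integration of~\eqref{eq:vfhot} in which the remainder $O(|(x,y)|^{\kappa+1})$ cannot be discarded naively. Assuming Propositions~\ref{prop:regvar},~\ref{prop:perturb} and Lemma~\ref{lem:xi}, the genuine work in the present theorem is the bookkeeping needed to verify that this precision survives both the mollification $f\to \tilde f$ and the $m\to\mu$ disintegration, in particular ensuring the multiplicative error from replacing $h_{W^u}$ by its limiting boundary value does not exceed $\max\{n^{-\beta_*},n^{-1}\log n\}$.
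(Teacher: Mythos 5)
Your proposal follows the same route as the paper: obtain the width asymptotic $\xi(\eta,n)\sim\xi_0(\eta)n^{-\beta}$ via the vector-field/time-$1$-map reduction (Propositions~\ref{prop:regvar}, \ref{prop:perturb} and the $C^\infty$-approximation of Lemma~\ref{lem:xi}), then disintegrate $\mu$ along unstable leaves and Taylor-expand the conditional densities $h_{W^u}$ at the stable manifold of $p$, yielding $C_0=\int_{\eta_0}^{\eta_1}\xi_0(y)h_{W^u(y)}(0,y)\,d\mu^s$. This matches the paper's proof; the only cosmetic difference is that your density-replacement error is quoted as $O(n^{-1/\kappa})$ where the paper gets $O(n^{-\beta})$, but both are dominated by $n^{-\beta_*}$ and absorbed into the stated remainder.
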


\begin{remark}
The $C^{\kappa+2}$ assumption is used 
for metric estimates in Proposition~\ref{prop:perturb}.
As $\kappa \geq 2$, this implies that $f$ is $C^4$,
and this is used to guarantee the existence of the SRB-measure $\mu_f$
(which relies on $C^4$ smoothness of the stable and unstable leaves,
see \cite[Theorem B]{Hu00}).
\end{remark}



In a special case (namely, when $f$ is the time-$1$ map of a vector field
without higher order terms), our estimates are much sharper:

\begin{theorem}\label{thm:regvarmu_special}
Suppose that the $C^4$ almost Anosov diffeomorphism $f$ on the torus
is near $(0,0)$ the time-$1$ map of the flow of  
\begin{equation}\label{eq:vf}
\begin{cases}
\dot x = x(a_0x^\kappa+a_2y^\kappa), \\
\dot y = -y(b_0x^\kappa+b_2 y^\kappa),
\end{cases}
\end{equation}
where $a_0, a_2, b_0, b_2$ are positive real constants with 
$\Delta := a_2b_0 - a_0b_2 \neq 0$.
Then there are real constants\footnote{These constants are typically nonzero; the precise values are 
given in the proof of Theorem~\ref{thm:regvarmu_special}.} $H_j$ and $\hat H_j$ such that 
$$
 \mu(\varphi > n) = \sum_{j=1}^\kappa H_j n^{-j \beta} - \sum_{j=1}^\kappa \hat H_j n^{-(j\beta+1)}
 + O(\max\{ n^{-(\kappa+1)\beta}, n^{-(2+\beta)} \} )
$$
for all $n \geq 2$. As before, $\beta = \frac{a_2+b_2}{\kappa b_2}$.
\end{theorem}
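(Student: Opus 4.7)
Because $f$ is exactly the time-$1$ map of~\eqref{eq:vf} near the fixed point, the tail of $\varphi$ can be computed by explicit integration along flow orbits and no perturbation estimate is needed. First, use the symmetry $(x,y)\mapsto(\pm x,\pm y)$ to reduce to the first-quadrant portion of $\{\varphi>n\}$, which sits in the strip above the top edge $\{y=\eta_0\}$ of $Q$. Each point $z$ of this strip lies on a unique flow orbit that last crossed $\{y=\eta_0\}$ at a point $(x^{\mathrm{top}},\eta_0)$ at flow time $-\tau$ for some $\tau\in(0,1]$; since $f=\phi^1$ one verifies
$$
\varphi(z)>n \iff T(x^{\mathrm{top}})+\tau>n,
$$
where $T(x^{\mathrm{top}})$ is the continuous exit time from $Q$ of the orbit through $(x^{\mathrm{top}},\eta_0)$. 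Three analogous contributions from the other quadrants add by symmetry.

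\textbf{Exit time from a first integral.} The substitution $w=x^\kappa/y^\kappa$ separates~\eqref{eq:vf} and produces the first integral
$$
H(x,y)=y^\kappa\, w^{\alpha}\bigl((a_0+b_0)w+(a_2+b_2)\bigr)^{\gamma},\qquad \alpha=\tfrac{b_2}{a_2+b_2},\quad \gamma=\tfrac{\Delta}{(a_2+b_2)(a_0+b_0)}.
$$
Along each orbit, using $dt=dw/\dot w$ and the conservation of $H$,
$$
T(x^{\mathrm{top}})=\frac{1}{\kappa H}\int_{w^{\mathrm{top}}}^{w_{\mathrm{exit}}(H)}\frac{w^{\alpha-1}\,dw}{\bigl((a_0+b_0)w+(a_2+b_2)\bigr)^{1-\gamma}},
$$
with $w^{\mathrm{top}}=(x^{\mathrm{top}}/\eta_0)^\kappa$. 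The integrand is integrable at both endpoints since $\alpha>0$ and $\alpha-1+\gamma=-a_0/(a_0+b_0)<0$; endpoint expansions near $w=0$ and $w=\infty$, together with the expansion of $1/H$ in $w^{\mathrm{top}}$, yield three interleaved asymptotic series of corrections to the leading behaviour $T\sim C(\eta_0)(x^{\mathrm{top}})^{-1/\beta}$. Formally inverting $T(x^{\mathrm{top}})=t$ gives
$$
x^{\mathrm{top},\ast}(t)=A\, t^{-\beta}+A_1\, t^{-\beta-1}+O\bigl(\max\{t^{-(\kappa+1)\beta},\,t^{-\beta-2}\}\bigr),
$$
the $t^{-\beta-1}$ term arising from the \emph{constant} shift of $T$ produced by the $w\to\infty$ part of the integral.

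\textbf{Taylor expansion of the SRB density and Euler--Maclaurin.} The SRB measure $\mu$ has $C^{\kappa+1}$ conditional densities $h=h_{W^u}$ bounded away from $0$ and $\infty$, by \cite[Proposition 3.1]{HY95}. Taylor expanding $h(\cdot,y)$ at $x=0$ to order $\kappa+1$,
$$
\int_0^{x^{\mathrm{top},\ast}}h(x,y)\,dx=\sum_{j=1}^{\kappa}\frac{(x^{\mathrm{top},\ast})^j}{j!}\,\partial_x^{j-1}h(0,y)+O\bigl((x^{\mathrm{top},\ast})^{\kappa+1}\bigr),
$$
and substituting the expansion of $x^{\mathrm{top},\ast}(t)$ above yields $\kappa$ terms of order $t^{-j\beta}$ (from $A^j$) together with $\kappa$ terms of order $t^{-j\beta-1}$ (from $jA^{j-1}A_1$). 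The outer integration $\int_0^1(\cdots)\,d\tau$ contributes, via Taylor expansion in $\tau$, an Euler--Maclaurin addition $+\tfrac{j\beta}{2}A^j$ to each $t^{-j\beta-1}$ coefficient; the $O((x^{\mathrm{top},\ast})^{\kappa+1})$ remainder and the $k\geq 2$ Taylor-in-$\tau$ terms together produce the error $O(\max\{n^{-(\kappa+1)\beta},\,n^{-\beta-2}\})$. Summing the four quadrant contributions gives the announced expansion, with $H_j$ proportional to $\int A(y)^j\,\partial_x^{j-1}h(0,y)\,dy$ and $\hat H_j$ the corresponding linear combination of the two $t^{-j\beta-1}$ sources.

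\textbf{Main obstacle.} The delicate step is controlling the three interleaved series of corrections to $T$: the $1/H$- and the $\int_0^{w^{\mathrm{top}}}$-corrections in integer powers of $(x^{\mathrm{top}})^\kappa$, and the $\int_{w_{\mathrm{exit}}}^\infty$-corrections in powers of $H^{(a_0+b_0)/a_0}$, whose exponents $k(a_0+b_0)/a_0$ are typically irrational. A direct arithmetic check using $\beta\kappa=(a_2+b_2)/b_2$ (equivalently $\beta\kappa\alpha=1$) shows that, after inversion, the first two series produce $x^{\mathrm{top},\ast}$-corrections at $n^{-\beta(1+k\kappa)}$ for $k\geq 1$ (absorbed in $n^{-(\kappa+1)\beta}$), while the third series contributes at $n^{-\beta-1-k(a_0+b_0)/a_0}$ and is for $k\geq 1$ strictly smaller than $n^{-\beta-2}$; only the $k=0$ (constant) shift is visible at the announced error level, giving the $A_1\,t^{-\beta-1}$ term. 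Verifying these absorptions uniformly, and tracking their combination with the Taylor and Euler--Maclaurin expansions, is the main bookkeeping burden of the proof.
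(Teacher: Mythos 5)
Your computation of the exit time via a first integral is correct and is essentially the paper's Proposition~\ref{prop:regvar} in different variables: your $w=x^\kappa/y^\kappa$ is $M^{-\kappa}$ for the paper's $M=y/x$, your $H$ is a positive power of the first integral $L$ of Lemma~\ref{lem:lf}, and your integrand $w^{\alpha-1}\bigl((a_0+b_0)w+(a_2+b_2)\bigr)^{\gamma-1}$ is exactly what \eqref{eq:M4} becomes under that substitution. Your inversion $x^{\mathrm{top},\ast}(t)=At^{-\beta}+A_1t^{-\beta-1}+O(\max\{t^{-(\kappa+1)\beta},t^{-\beta-2}\})$ matches the conclusion of Proposition~\ref{prop:regvar}, your arithmetic absorbing the three correction series into the stated error is right, and the order-$\kappa$ Taylor expansion of the $C^{\kappa+1}$ conditional density is the paper's step verbatim. (One slip in attribution: the constant shift of $T$ comes from \emph{both} truncated tails, since $\int_0^{w^{\mathrm{top}}}$ and $\int_{w_{\mathrm{exit}}}^\infty$ are each asymptotically a constant multiple of $H$; it is not produced by the $w\to\infty$ part alone.)

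The genuine gap is the transverse integration. You foliate the return strip by the flow-time curves $\Phi^{-\tau}(\{y=\eta_0\})$, $\tau\in(0,1]$, and integrate $d\tau$ against the unstable conditional density $h$. But $\mu$ is the SRB measure of $F$: its conditionals are absolutely continuous with $C^{\kappa+1}$ densities only along \emph{unstable leaves} of $f$; there is no reason for $\Phi^{-\tau}(\{y=\eta_0\})$ to be an unstable leaf for non-integer $\tau$ (the unstable foliation is invariant under $f=\Phi^1$, not under $\Phi^\tau$, since $f$ only agrees with the flow locally); and the transverse measure over any foliation is not Lebesgue in $\tau$. Hence the implicit factorisation $d\mu=h(x,y)\,dx\,d\tau$ behind your Euler--Maclaurin contribution $+\tfrac{j\beta}{2}A^j$ is unjustified, and in addition the boundary of $\{\varphi>n\}$ on the curve $\tau=\mathrm{const}$ is not at $x=x^{\mathrm{top},\ast}(n-\tau)$ but at its image under $\Phi^{-\tau}$, whose $x$-coordinate differs by the order-one factor $1-\tau a_2\eta_0^\kappa+O(\cdots)$. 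The paper sidesteps all of this by disintegrating $d\mu=d\mu^u_{W^u(y)}\,d\mu^s(y)$ along the actual unstable leaves $W^u(y)$, $y\in[\eta_0,\eta_1]$, keeping the transverse measure $\mu^s$ abstract; on each such leaf the set $\{\varphi>n\}$ is the interval $[0,\xi(y,n)]$ with $\xi(y,n)$ from Proposition~\ref{prop:regvar}, so the entire $n^{-(j\beta+1)}$ coefficient comes from the $\xi_1(y)n^{-1}$ correction and no expansion in $\tau$ occurs. Because the theorem only asserts existence of real constants, your expansion's \emph{structure} would survive with an arbitrary finite transverse measure, but the disintegration step as written is not valid and your identification of $\hat H_j$ is wrong; the repair is to integrate over unstable leaves as the paper does.
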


Theorems~\ref{thm:regvarmu} and \ref{thm:regvarmu_special} (proved at the end of 
Sections~\ref{sec:proof} and \ref{sec:asymp} respectively)
strengthen the estimates in \cite{HuZhangprepr} considerably,
see Remark~\ref{rem:HZ}.

\begin{remark}\label{rem:LebesgueSRB}
 If $(\kappa+1) a_0 = b_0$ and $a_2 = (\kappa+1) b_2$, then the divergence of the vector field
 in \eqref{eq:vf} is zero, and the flow preserves Lebesgue measure (cf.\ \cite{K79}).
 In this case $\beta = \frac{\kappa+2}{\kappa} > 1$.
 Provided the global dynamics preserves Lebesgue as well, Lebesgue measure is the SRB measure
 both in forward and backward time.
\end{remark}

\begin{remark}
The exponents seen in Theorems~\ref{thm:regvarmu} and \ref{thm:regvarmu_special}
depend on the quotients $b_2/a_2$ and $b_0/a_0$ rather than on $a_0, a_2, b_0, b_2$
separately. To explain this, note that the linear change of coordinates
 $r \bar x = x, \ s \bar y = y$ (on the quadrant $Q$, so we can drop the absolute value signs)
 transforms \eqref{eq:vf} into
 $$
 \begin{cases}
\dot{\bar x} = \bar x(a_0 r^\kappa \bar x^\kappa+a_2 s^\kappa \bar  y^\kappa), \\
\dot{\bar y} = -\bar y(b_0 r^\kappa \bar x^\kappa+b_2 s^\kappa \bar y^\kappa).
\end{cases}
$$
Apart from changing the size of the rectangle $Q$, this
has no effect on the asymptotic behavior of the 
system (such as those described in Proposition~\ref{prop:regvar}), and therefore
the important parameters (i.e., exponents) in the asymptotics should be independent of $r,s$.
This is indeed true when they depend only on the quotients $b_2/a_2$ and $b_0/a_0$.
\end{remark}


\begin{theorem}\label{thm:mix1}
Consider the setting of Theorem~\ref{thm:regvarmu}
and assume that $\beta \in (\frac12,1)$.
 Then for all observables $v, w \in C^1$ supported on $Y$ we have
 \begin{equation}\label{eq:mix1}
  \lim_{n \to \infty} n^{1-\beta} \int_{\bT^2} v \cdot w \circ f^n \, d\mu
  = d_0 \int_{\bT^2} v \, d\mu_f \int_{\bT^2} w \, d\mu,
 \end{equation}
 where $d_0 = \left( C_0\, \Gamma(\beta)\, \Gamma(1-\beta) \right)^{-1}$ 
 with $C_0 > 0$ as in Theorem~\ref{thm:regvarmu}.
\end{theorem}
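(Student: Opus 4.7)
The plan is to realize the correlation $\int v \cdot w \circ f^n \, d\mu$ as a matrix element of the $n$-th iterate of the $f$-transfer operator restricted to $Y$, and then apply the operator renewal theory for infinite measure preserving systems of \cite{MT}. Since $v,w$ are supported on $Y$, this integral coincides with $\int_{\bT^2} v \cdot w \circ f^n \, d\mu_f$, which is precisely the quantity treated in that framework.

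First, I would work with the induced map $F = f^\varphi$ on $Y$ and its transfer operator $\hat L$ on the anisotropic Banach space $\cB$ from \cite{LT}, the same space on which the spectral gap is established elsewhere in the paper. Splitting according to the return time, $\hat L_k u := \hat L(\mathbf{1}_{\{\varphi = k\}} u)$, the standard tower identity gives the operator renewal equation
$$
T_n \;=\; \sum_{k=1}^{n} \hat L_k \, T_{n-k}, \qquad T_0 = I,
$$
where $T_n$ is the restriction to $Y$ of the transfer operator of $f^n$ applied to functions supported on $Y$. Passing to generating functions,
$$
T(\zeta) \;=\; \sum_{n \geq 0} \zeta^n T_n \;=\; \bigl(I - \hat L(\zeta)\bigr)^{-1}, \qquad \hat L(\zeta) := \sum_{k \geq 1} \zeta^k \hat L_k,
$$
so the asymptotics of $T_n$ are governed by the behaviour of $\hat L(\zeta)$ as $\zeta \to 1$.

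The regular variation from Theorem~\ref{thm:regvarmu}, $\mu(\varphi > n) = C_0 n^{-\beta}(1+o(1))$, yields via Karamata's theorem the asymptotic
$$
1 - E(\zeta) \;=\; C_0\,\Gamma(1-\beta)\,(1-\zeta)^\beta\,\bigl(1+o(1)\bigr) \quad \text{as } \zeta \to 1, \qquad E(\zeta) := \sum_{k \geq 1} \zeta^k \mu(\varphi = k),
$$
which, combined with the decomposition $\hat L(\zeta) = E(\zeta)\,P + Q(\zeta)$ where $P$ is the spectral projection of $\hat L(1)$ and $Q(\zeta)$ has spectral radius uniformly less than $1$, produces
$$
T(\zeta)\, P \;\sim\; \frac{1}{C_0\,\Gamma(1-\beta)\,(1-\zeta)^\beta}\, P \quad \text{as } \zeta \to 1.
$$
Applying the operator Tauberian theorem of \cite{MT} in the range $\beta \in (\tfrac12, 1)$ — after verifying (i) the spectral gap of $\hat L$, (ii) the aperiodicity condition that $I - \hat L(\zeta)$ is invertible for every $\zeta \in \partial\D \setminus \{1\}$, and (iii) enough H\"older regularity of $\zeta \mapsto \hat L(\zeta)$ on the closed unit disk — gives
$$
n^{1-\beta}\, T_n \;\longrightarrow\; \frac{1}{C_0\,\Gamma(\beta)\,\Gamma(1-\beta)}\, P \;=\; d_0\, P
$$
in the strong operator topology on $\cB$, since the coefficient of $\zeta^n$ in $(1-\zeta)^{-\beta}$ is asymptotic to $n^{\beta-1}/\Gamma(\beta)$. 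Pairing with $w$, whose $C^1$ regularity places it in an appropriate predual of $\cB$, converts this limit into \eqref{eq:mix1}.

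The main obstacle is verifying the three operator-theoretic inputs directly on the anisotropic space $\cB$ of distributions: both aperiodicity and continuous dependence of $\hat L(\zeta)$ on $\zeta$ up to the boundary are subtler than on standard function spaces, and the required regularity of $\zeta \mapsto \hat L(\zeta)$ reduces to summable-in-$k$ operator-norm bounds on $\hat L_k$ that use the precise tail rate in \eqref{eq:tail} together with the smooth geometric structure of the strips $\{\varphi = k\}$ from Section~\ref{sec:set-up}.
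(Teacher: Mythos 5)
Your proposal is correct and follows essentially the same route as the paper: the paper's proof of Theorem~\ref{thm:mix1} is a direct citation of \cite[Theorem 1.1]{LT} (an extension of \cite[Theorem 1.1]{MT}), whose internal mechanism is exactly the operator renewal equation, Karamata asymptotics for $1-E(\zeta)$, and Tauberian argument for $\beta\in(\frac12,1)$ that you sketch. The three operator-theoretic inputs you flag at the end are precisely the hypotheses (H1), (H4) and (H5) that the paper verifies on the anisotropic space $\cB$ in Section~\ref{sec-Bspace}, with (H3) supplied by Theorem~\ref{thm:regvarmu}.
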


\begin{remark}\label{rem:beta1}
 The case $\beta = 1$ and $\liminf$ results for $\beta \leq \frac12$ can be obtained as in \cite{MT}.
Because the proofs here follow the structure of the arguments in \cite{LT}, and the case $\beta = 1$ 
was omitted there, we will omit them here as well. For the range $\beta \leq \frac12$,
we provide a stronger result in Theorem~\ref{thm:mix2}.
\end{remark}

\begin{remark}\label{rem:mix}
It is not clear to us how to get good estimates for the {\em small tails} $\mu(\varphi = n)$.
Theorem~\ref{thm:mix1} could have been improved to all $\beta \in (0,1)$
provided the small tails satisfy $\mu(\varphi = n) = O(n^{-(1+\beta)})$:
this would then have been an immediate consequence of \cite{Gouezel11}.
However, in the setting of Theorem~\ref{thm:regvarmu}, we have no better estimate than
$\mu(\varphi = n) = O(\max\{ n^{-(\beta+\beta_*)}, n^{-(1+\beta)} \log n \})$ and $\beta_* \leq \beta$.
This is caused by the estimates for $\tilde T$ we use in the last step of the proof
of  Proposition~\ref{prop:perturb}.
\end{remark}

Mixing rates are obtained as well.

\begin{theorem}\label{thm:mix2}
 Assume the setting of Theorem~\ref{thm:mix1} with\footnote{There is an open region in parameter space 
(e.g.\ $\frac23 b_2 < a_2 < b_2$, $b_0 > \frac23 a_0$ for $\kappa = 2$),
where $\beta_* \in (\frac13, \frac12)$, and $\beta + \beta_* \in (\frac76,\frac32)$.} 
$\beta \in (\frac12,1)$ and $\beta_* \in (\frac13,\beta)$.
 Take $q := \max\{ j \in \N :  (j+1)\beta > j\}$.
 Then there are  (generically nonzero) real constants $d_1, \dots , d_q$ such that
 \begin{equation}\label{eq:mix2}
  \int_{\bT^2} v \cdot w \circ f^n \, d\mu =
  \left( d_0 n^{\beta-1} + d_1 n^{2(\beta-1)} + \dots + d_q n^{(q+1)(\beta-1)} \right)
   \int_{\bT^2} v \, d\mu \int_{\bT^2} w \, d\mu + E_n,
 \end{equation}
where $E_n = O( n^{-(\beta_*-1/3)}, n^{-(\beta + \beta_* -1)/3})$.
\end{theorem}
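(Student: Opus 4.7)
The plan is to apply operator renewal theory in the spirit of \cite{Gouezel11, MT}, using Theorem~\ref{thm:regvarmu} as the analytic tail input and the quasi-compactness of the transfer operator $\hat F$ of the induced map $F = f^\varphi$ on the (modified) anisotropic Banach space of \cite{LT} as the spectral input. The overall structure parallels \cite{LT}; the novelty is having to accommodate the weaker tail remainder $O(\max\{n^{-\beta_*}, n^{-1}\log n\})$ that Theorem~\ref{thm:regvarmu} supplies, rather than the cleaner $O(n^{-(1+\beta)})$ remainder assumed in \cite{Gouezel11}.

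First I would set up the standard renewal machinery on the Banach space. Introduce the sub-operators $R_n v := \hat F(\mathbf{1}_{\{\varphi = n\}}\,v)$ corresponding to first returns at exactly time $n$, and the generating operator $T(z) := \sum_{n \ge 1} z^n R_n$. By the operator renewal identity of \cite{Sarig02, Gouezel04}, the renewal operators $T_n$ defined through $\sum_n z^n T_n = (I - T(z))^{-1}$ reconstruct the transfer operator of $f^n$ localized to $Y$; modulo a contribution of size $O(\mu(\varphi > n))$, one has $\int v\cdot w\circ f^n \, d\mu \approx \int v \cdot T_n w\, d\mu$ for $v,w\in C^1$ supported in $Y$.

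Second, I would expand $I - T(z)$ around $z = 1$. Writing $I - T(z) = \sum_{n\ge 1}(1-z^n)R_n$ and using Theorem~\ref{thm:regvarmu} together with the smoothness of the conditional SRB densities $h_{W^u}$, one obtains
$$
I - T(z) \;=\; c_0 (1-z)^\beta\, \Pi \;+\; \sum_{j=1}^{q} c_j (1-z)^j \;+\; (\text{remainder}),
$$
where $\Pi$ is a rank-one operator built from the leading eigendata of $\hat F$ at $z = 1$. Inverting via a Neumann series and keeping only the singular part yields
$$
(I - T(z))^{-1} \;=\; \sum_{k=0}^{q} A_k\,(1-z)^{-\beta + k(1-\beta)} \;+\; \text{regular part} \;+\; \text{error},
$$
and a standard Karamata/contour extraction converts the exponent $-\beta + k(1-\beta)$ into the decay rate $n^{\beta - 1 - k(1-\beta)} = n^{(k+1)(\beta-1)}$. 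This produces the constants $d_0,d_1,\ldots,d_q$ in \eqref{eq:mix2}, and the cutoff $q = \max\{j\in\N : (j+1)\beta > j\}$ is precisely the condition $(q+1)(\beta-1) > -1$, i.e.\ that the last retained term still dominates the trivial $1/n$ scale.

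The main obstacle, and the source of the unusual form of $E_n$, is the third step: propagating the weak tail remainder through the renewal inversion. As Remark~\ref{rem:mix} points out, one lacks the $O(n^{-(1+\beta)})$ bound on $\mu(\varphi = n)$ that a direct application of \cite{Gouezel11} would require. The remedy is a Gou\"ezel-style bootstrap: split the renewal convolution at an intermediate scale $n^\alpha$, bound the short-range part using the tail error and the long-range part spectrally, then optimize $\alpha$. The choice $\alpha = 1/3$ gives the two exponents $\beta_* - 1/3$ and $(\beta+\beta_*-1)/3$ appearing in $E_n$, and the hypothesis $\beta_* > 1/3$ is exactly what forces the first to be positive. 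Keeping the Banach-space spectral bounds from \cite{LT} consistent with this interpolation, so that the advertised $E_n$ is achieved uniformly over $C^1$ observables, is the main bookkeeping challenge.
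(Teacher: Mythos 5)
Your proposal is correct in outline and follows essentially the same route as the paper: the paper's entire proof of Theorem~\ref{thm:mix2} is the one-line citation of \cite[Theorem 1.2 i)]{LT} (an extension of \cite[Theorem 9.1]{MT}) ``with $2\beta$ there replaced by $\beta+\beta_*$'', resting on the verification of (H1)--(H5) in Section~\ref{sec-Bspace} and on the tail asymptotics of Theorem~\ref{thm:regvarmu}, which is exactly the input you identify. What you sketch in your second and third steps (the expansion of $I-T(z)$ at $z=1$, the identification of $q$ via $(q+1)(\beta-1)>-1$, and the interpolation producing the exponents $\beta_*-\frac13$ and $(\beta+\beta_*-1)/3$) is precisely the internal content of that cited theorem, so no new argument beyond the substitution $2\beta\to\beta+\beta_*$ is required.
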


In the stronger setting of Theorem~\ref{thm:regvarmu_special}, the rates can be improved.

\begin{theorem}\label{thm:mix3}
 Assume the setting of Theorem~\ref{thm:mix2} for the time-$1$ map of the 
 vector field \eqref{eq:vf} and $\beta \in (0,1)$.
 Then \eqref{eq:mix2} holds with $E_n = O(n^{-1}(\log n)^r)$ for $r = 1$ if $\beta \neq \frac12$ 
 and $r = 2$ if $\beta = \frac12$.
 \end{theorem}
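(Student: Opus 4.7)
The plan is to follow exactly the scheme used for Theorem~\ref{thm:mix2}, namely the anisotropic Banach space setup from \cite{LT} for the induced transfer operator, combined with the operator renewal theory of \cite{MT,Gouezel11}, but to feed in the much sharper tail expansion from Theorem~\ref{thm:regvarmu_special} in place of the cruder estimate from Theorem~\ref{thm:regvarmu}. Concretely, let $\tilde L$ be the transfer operator of the induced map $F$ acting on the space $\cB$ from \cite{LT} (a minor modification of which was already shown to give the spectral gap needed for Theorem~\ref{thm:mix1}), and let $\hat R(e^{i\theta}) := \sum_n e^{i n \theta}\, \tilde L(\mathbf{1}_{\{\varphi=n\}}\cdot)$ be the twisted transfer operator. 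Denote by $\lambda(e^{i\theta})$ the leading eigenvalue of $\hat R(e^{i\theta})$.

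First, I would rewrite the expansion of Theorem~\ref{thm:regvarmu_special} as
$$
\mu(\varphi > n) = \sum_{j : j\beta < 1} H_j n^{-j\beta} \;+\; O\!\left(n^{-1}\right),
$$
where the absorbed terms $H_j n^{-j\beta}$ with $j\beta \geq 1$, the correction terms $\hat H_j n^{-(j\beta+1)}$, and the residual $O(\max\{n^{-(\kappa+1)\beta}, n^{-(2+\beta)}\})$ are all $O(n^{-1})$. Summation by parts then converts this into an expansion of $1 - \lambda(e^{i\theta})$ near $\theta = 0$ of the form $\sum_{j : j\beta < 1} A_j (-i\theta)^{j\beta} + O(|\theta|\, |\log|\theta||)$, with the logarithm reflecting the critical Tauberian exponent at the $n^{-1}$ boundary (and with a resonance when $\beta = 1/2$, discussed below).

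Second, I would invert by Gou\"ezel's operator renewal theorem \cite[Thm.~2.3]{Gouezel11}, which expresses $\tilde L^n$ (after projection to the dominant part) as the $n$-th Fourier coefficient of $(I - \hat R(e^{i\theta}))^{-1}$. The main-order terms in the expansion of $\lambda$ produce exactly the polynomial terms $d_0 n^{\beta-1} + d_1 n^{2(\beta-1)} + \dots + d_q n^{(q+1)(\beta-1)}$, with the cutoff $q = \max\{j : (j+1)\beta > j\}$ being precisely the range in which $n^{(j+1)(\beta-1)}$ dominates $n^{-1}$. All remaining contributions are $O(n^{-1}\log n)$ for generic $\beta$. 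When $\beta = \tfrac12$, the would-be secondary term $n^{2(\beta-1)} = n^{-1}$ collides with the critical error, so the standard Tauberian treatment produces an extra $\log n$, giving $O(n^{-1}(\log n)^2)$. Transferring back to $f$ via the renewal identity $\int v \cdot w \circ f^n \, d\mu = \sum_{k=0}^{n} \int \tilde L^{k} (v \cdot \mathbf{1}_{\{\varphi > n-k\}})\, w\, d\mu$, exactly as in Theorem~\ref{thm:mix2}, converts these asymptotics for $\tilde L^n$ into the claimed expansion \eqref{eq:mix2} with error $E_n = O(n^{-1}(\log n)^r)$.

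The main obstacle is operator-level control of the remainder at the level $n^{-1}(\log n)^r$. Gou\"ezel's theorem is stated for scalar transforms, so one must verify that the asymptotic expansion of $\lambda(e^{i\theta})$ lifts to a uniform expansion of $(I - \hat R(e^{i\theta}))^{-1}$ on $\cB$, with the eigenprojection depending sufficiently smoothly on $\theta$; this requires the spectral perturbation argument of \cite{LT} to be carried out to one order finer than in Theorem~\ref{thm:mix2}. The resonance at $\beta = \tfrac12$ in particular demands a careful bookkeeping of the subleading $\hat H_j$ terms from Theorem~\ref{thm:regvarmu_special}, since these are precisely what feeds the second $\log n$ factor.
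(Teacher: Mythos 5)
Your proposal is correct and follows essentially the same route as the paper: the paper's proof of Theorem~\ref{thm:mix3} is a direct citation of \cite[Theorem 1.3]{T-PTRF} (adapted to the present Banach spaces via \cite[Lemma 3.1]{LT}) together with the closing argument of \cite[Proof of Theorem 1.2 i)]{LT}, and the operator renewal scheme you sketch --- feeding the sharp tail expansion of Theorem~\ref{thm:regvarmu_special} into the expansion of $(I-\hat R(e^{i\theta}))^{-1}$, with the $\beta=\tfrac12$ resonance producing the extra $\log n$ --- is exactly the machinery inside those cited results. The ``main obstacle'' you identify (lifting the scalar expansion to a uniform operator-level one) is precisely what the cited theorem supplies, so your outline matches the paper's argument with the black box opened.
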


\subsection{Summary of the abstract framework and hypotheses in~\cite{LT}}\label{sec-LT}

In this section we recall the abstract framework and hypotheses in~\cite{LT} as relevant 
for the class of maps previously described. We check these hypotheses for our
almost Anosov diffeomorphisms in Section~\ref{sec-Bspace}.

Let $R:L^1(m)\to L^1(m)$ be the transfer operator for the first return 
map $F:Y\to Y$ w.r.t.\ Lebesgue measure $m$. 
We recall that $R$ is defined by duality on $L^1(m)$ via the formula
$\int_Y Rv\,w\,d m = \int_Y v\,w\circ F\,d m$ for all bounded and measurable $w$ 
(see Section~\ref{subsec-Trop} for a more explicit description).

In this work we verify that in the case of the map described in Section~\ref{sec:set-up}, 
there exist two Banach spaces of distributions $\cB$, $\cB_w$ supported on $Y$ such that

\begin{itemize}\label{H1}
\item[(H1)] \begin{itemize}
\item[(i)] $C^1\subset\cB\subset\cB_w\subset (C^1)'$, where 
$(C^1)'$ is the dual of $C^1 = C^1 (Y,\C)$.\footnote{We will use systematically a ``prime" to denote the topological dual.}
\item[(ii)] There exists $C>0$ such that for all $h\in\cB$, $\phi\in C^1$, we have $h\phi\in\cB$ and 
$\|h\phi\|_{\cB}\leq C\|h\|_{\cB} \|\phi\|_{C^1}$.
\item[(iii)] The transfer operator $R$ associated with $F$ mapping continuously from $C^1$ to $\cB$, and
$R$ admits a continuous extension to an operator from $\cB$ to $\cB$, which we still call $R$.
\item[(iv)] The operator $R:\cB\to\cB$ has a simple eigenvalue at $1$
and the rest of the spectrum is contained in a disc of radius less than $1$.
\end{itemize}
\end{itemize}

A few comments on (H1) are in order and here we just recall the ones in~\cite{LT}.
We note that (H1)(i) should be understood in terms of the usual convention  (see, for instance,
~\cite{GL06, DemersLiverani08}) which we follow here: there exists continuous injective linear maps $\pi_i$ such that $\pi_1(C^1)\subset \cB$, $\pi_2(\cB)\subset \cB_w$ and $\pi_3(\cB_w)\subset (C^1)'$.
We will often leave such maps implicit, unless this might create confusion. In particular, 
we assume that $\pi=\pi_3\circ\pi_2\circ \pi_1$ is the usual 
embedding, i.e., for all $h, \phi \in C^1$
$$
\langle \pi(h),\phi\rangle=\int_Y h \phi\, dm.
$$
Via the above identification, the Lebesgue measure $m$ can be identified with the constant function $1$ both in $(C^1)'$ and in $\cB$ 
(i.e., $\pi(1)=m$). 
Also, by (H1)(i), $\cB'\subset (C^1)'$, hence the dual space can be naturally viewed as a space of distributions. Next, note that $\cB'\supset (C^1)''\supset C^1\ni 1$, thus we have $m\in\cB'$ as well.
Moreover, since $m\in\cB$ and $\langle1,\phi\rangle=\langle\phi,1\rangle=\int \phi\, dm$, $m$ can be viewed as the element $1$  of both spaces
$\cB$ and $(C^1)'$.

By (H1), the spectral projection $P$ associated with the eigenvalue $1$ is defined by
$P=\lim_{n\to\infty}R^n$. Note that for each $\phi\in C^1$,
\[
\langle P\phi,1\rangle=m(P \phi)=\lim_{n\to\infty}m(1\cdot R^n\phi)=m(\phi)=\langle\phi,1\rangle.
\]
By  (H1)(iv),  there exists a unique $\mu\in\cB$ such that $R\mu=\mu$ and $\langle\mu,1\rangle =1$ . Thus, $P \phi=\mu \langle \phi,1\rangle$.
Also $R'm=m$ where $R'$ is dual operator acting on $\cB'$. Note that for any $\phi\in C^1$,
\begin{align*}
|\langle \mu, \phi\rangle|=|\langle P1, \phi\rangle|=\left|\lim_{n\to\infty}R^n m(\phi)\right|
= \lim_{n\to\infty}\left|m(\phi\circ F^n)\right|\leq |\phi|_\infty.
\end{align*}
Hence $\mu$ is a measure. For each $\phi\geq 0$, 
\[
\langle P1, \phi\rangle=\lim_{n\to \infty} \langle R^n1, \phi\rangle=\lim_{n\to \infty} \langle 1, \phi\circ F^n\rangle\geq 0.
\]
It follows that $\mu$ is a probability measure.

Summarizing the above, the eigenfunction associated with the eigenvalue $1$
 is an invariant probability measure for $F$ and  we can write $P1=\mu$.
 
In the rest of this section we list the hypotheses of~\cite{LT} that we will verify 
for the map described in Section~\ref{sec:set-up}.
Recall that $\varphi:Y\to \bN$ is the first return time to $Y$. We verify that
\begin{itemize}\label{H2}
\item[(H2)]  
there exists $C>0$, $\alpha\in(0,1]$ such that
for any $n\in\bN$ and $h\in\cB$ we have $\Id_{\{ \varphi = n\}} h\in\cB_w$ and
\[
|\langle \Id_{\{ \varphi = n\}} h, 1\rangle|\leq C \| h\|_{\cB}\;\mu(Y_n)^\alpha.
\]
\end{itemize}

In the \emph{infinite} measure setting Theorem~\ref{thm:regvarmu} implies that

\begin{itemize}
\item[(H3)]
$\mu(y\in Y:\varphi(y)>n)=\ell(n)n^{-\beta}$ where $\ell$ is slowly varying and
$\beta\in(0,1)$.
\end{itemize}

We also verify some standard assumptions in operator renewal theory as first developed in~\cite{Sarig02,Gouezel04} 
(finite measure case) and~\cite{Gouezel11,MT} (infinite measure case). 

Let $\D=\{z\in\C:|z|<1\}$ and
$\bar\D=\{z\in\C:|z|\le1\}$.
Given $z\in\bar\D$, we define the perturbed transfer operator $R(z)$ (acting as operator 
$\cB\to\cB$ or $\cB\to\cB_w$) by
$R(z)v=R(z^\varphi v)$.   Also, for each $n\ge1$, we define
$R_n$ (acting on $\cB, \cB_w$) by  $R_nv=R(1_{\{\varphi=n\}}v)$.
We will show that 

\begin{itemize}
\item[(H4)] $\|R_n\|_{\cB} = O( m(\vf=n) )$.
\end{itemize}

Assumption (H4) ensures that $R(z)=\sum_{n=1}^\infty R_nz^n$ is a well defined family of operators from
$\cB$ to $\cB$ (or from $\cB$ to $\cB_w$). 
Also, we notice that  (H1) and (H4) ensure that $z\mapsto R(z)$,  $z\in\bar\D$,  is a continuous family of bounded operators (analytic on $\D$) from $\cB$ to $\cB$ (or from $\cB$ to $\cB_w$).
The following assumption was essential for the main abstract results in~\cite{LT} and we shall verify it for the studied example.

\begin{itemize}
\item[(H5)]
\begin{itemize}
\item[i)] There exist $C>0$ and $\lambda>1$ such that for all $z\in\bar\D$, $h\in\cB$ and $n\geq 0$,
\[
\|R(z)^n h\|_{\cB_w}\leq C|z|^n\|h\|_{\cB_w},\quad \|R(z)^n 
h\|_{\cB}\leq C\lambda^{-n}|z|^n\|h\|_{\cB}
+C|z|^n\|h\|_{\cB_w}.
\]
\item[ii)] For $z\in\bar\D\setminus\{1\}$, the spectrum of $R(z):\cB\to\cB$ does
not contain $1$.
\end{itemize}
\end{itemize}

\subsection{Proofs of Theorems~\ref{thm:mix1}, \ref{thm:mix2} and \ref{thm:mix3}}

In Section~\ref{sec-Bspace}, we verify that the map described in Section~\ref{sec:set-up}
satisfies (H1)--(H5).  In Section~\ref{sec:regvar}, we provide the set of parameters for which this map satisfies the assumptions
on $\mu(\varphi>n)$ used in Theorems~\ref{thm:mix1}, \ref{thm:mix2} and \ref{thm:mix3}. Given these facts, these 
results are an immediate consequence of previous works, as summarized below.
\\[3mm]
\begin{proofof}{Theorem~\ref{thm:mix1}} 
This follows directly from \cite[Theorem 1.1]{LT} (which is an extension of \cite[Theorem 1.1]{MT}).
\end{proofof}

\begin{proofof}{Theorem~\ref{thm:mix2}}
This follows from ~\cite[Theorem 1.2.\ i)]{LT} (an extension of \cite[Theorem 9.1]{MT})
with $2\beta$ there replaced by $\beta + \beta_*$.~\end{proofof}

\begin{proofof}{Theorem~\ref{thm:mix3}}
We note that~\cite[Theorem 1.3]{T-PTRF} holds with the present Banach space of distributions 
described in Section~\ref{sec:set-up} instead of the Banach space used in~\cite{T-PTRF}
provided  ~\cite[Lemma 2.1]{T-PTRF} is replaced by ~\cite[Lemma 3.1]{LT}.
The conclusion follows from this fact together with the argument used
at the end of~\cite[Proof of Theorem 1.2.\ i)]{LT}.~\end{proofof}

\section{Regular variation of $\mu(\varphi > n)$}\label{sec:regvar}

Let $Q := [0,\zeta_0] \times [0,\eta_0]$. We start with a harmless change of coordinates
turning $\partial Q$ into local stable and unstable leaves.

\begin{lemma}\label{lem:straight}
Let $f$ be a $C^r$ diffeomorphism\footnote{We will use this lemma for $r = \kappa+2$
and later for $r = \infty$ for the $f_j$'s in the proof of Lemma~\ref{lem:xi}.} of the form \eqref{eq:diffeo}.
 There is a $C^{\kappa+2}$ change of coordinates which
 transforms the upper and right boundary of $Q$ into the horizontal
 arc $[0, \zeta_0] \times \{ \eta_0\}$ and the vertical
 arc $\{ \zeta_0 \} \times [0, \eta_0]$ respectively, whilst 
 the diffeomorphism $f$ still has the form \eqref{eq:diffeo}.
 That is, $W^u((0, \eta_0))$ and $W^s((\zeta_0,0))$ contain locally
 a horizontal and vertical arc respectively.
\end{lemma}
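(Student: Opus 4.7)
The plan is to build a $C^{\kappa+2}$ diffeomorphism $\Phi$ of a neighbourhood of $Q$ that flattens the two distinguished leaves $\gamma^u := W^u((0,\eta_0))$ and $\gamma^s := W^s((\zeta_0,0))$ into straight horizontal and vertical arcs, while preserving the coordinate axes and the local normal form \eqref{eq:diffeo}. The construction is essentially graph straightening via cut-off functions that kill the coordinate change in a neighbourhood of the origin.

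\emph{Smoothness of the leaves.} First I would note that the first-return map $F = f^\varphi$ is uniformly hyperbolic on $Y$ and $C^{\kappa+2}$ away from $W^{u/s}(p)$, so the stable/unstable manifold theorem yields that $\gamma^u$ and $\gamma^s$ are $C^{\kappa+2}$ curves which can be written as graphs $\gamma^u = \{(x, g(x)) : x \in [0,\zeta_0]\}$ and $\gamma^s = \{(h(y), y) : y \in [0,\eta_0]\}$ with $g(0) = \eta_0$ and $h(0) = \zeta_0$. Since these curves are tangent to the narrow unstable and stable cones, $\|g'\|_\infty$ and $\|h'\|_\infty$ are small, and both leaves stay at distance at least $\eta_0/2$, resp.\ $\zeta_0/2$, from the origin.

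\emph{Construction of $\Phi$.} Fix a $C^\infty$ cut-off $\chi : \R \to [0,1]$ with $\chi \equiv 0$ on $[0,\delta]$ and $\chi \equiv 1$ on $[2\delta,\infty)$, where $\delta$ is small compared to $\min(\zeta_0, \eta_0)$, and set
\[
\Phi(x,y) := \bigl(\,x + \chi(x)(\zeta_0 - h(y)),\ y + \chi(y)(\eta_0 - g(x))\,\bigr).
\]
Using $\chi(0) = 0$ together with $h(0) = \zeta_0$ and $g(0) = \eta_0$, $\Phi$ fixes both coordinate axes, and on the square $[0,\delta]^2$ it is literally the identity. Because $h(y), g(x) > 2\delta$ throughout their domains, $\chi(h(y)) = \chi(g(x)) = 1$, and a direct substitution gives $\Phi(h(y), y) \in \{\zeta_0\} \times \R$ and $\Phi(x, g(x)) \in \R \times \{\eta_0\}$; hence $\Phi(\gamma^s)$ lies in a vertical arc and $\Phi(\gamma^u)$ in a horizontal arc, as required (after relabelling $\zeta_0, \eta_0$ if the endpoints shift slightly).

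\emph{Diffeomorphism property and preserved form.} The Jacobian $D\Phi$ differs from $I$ by entries bounded by $\|\chi'\|_\infty \max(\|\zeta_0 - h\|_\infty, \|\eta_0 - g\|_\infty)$ and by $\max(\|g'\|_\infty, \|h'\|_\infty)$; shrinking $Q$ (equivalently $P_0$) and exploiting the narrowness of the cones forces these to be small enough to ensure $\Phi$ is a $C^{\kappa+2}$ diffeomorphism onto its image. Since $\Phi \equiv \mathrm{Id}$ on the neighbourhood $[0,\delta]^2$ of $(0,0)$, the conjugate $\Phi \circ f \circ \Phi^{-1}$ agrees with $f$ there and therefore has the local form \eqref{eq:diffeo} verbatim, with the same coefficients. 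The hardest step I anticipate is the quantitative control of $g, h$ and their derivatives required to establish global injectivity of $\Phi$; this is a standard consequence of the invariant manifold theorem for $F$ combined with the explicit cone bounds implied by \eqref{eq:diffeo}, but is the one non-formal ingredient in the argument.
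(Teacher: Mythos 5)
Your proof is correct and is essentially the paper's argument: both straighten the $C^{\kappa+2}$ graph representations of the two leaves by an explicit near-identity shear built with a cut-off function that makes the change of coordinates equal to the identity near the fixed point (and on the axes), so that the local form \eqref{eq:diffeo} is untouched. The only cosmetic difference is that the paper performs two successive one-coordinate shears, each with the bump function localised around the leaf being straightened (via $\chi(y-w(x))$), whereas you do both coordinates at once with cut-offs $\chi(x),\chi(y)$ vanishing near the axes; the quantitative smallness needed for invertibility is of the same nature in both versions.
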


\begin{proof}
 Since $f$ is $C^{\kappa+2}$, the local unstable manifold $W_{loc}^u(\eta_0)$ is a $C^{\kappa+2}$ curve
 (by an adaptation of \cite[Proposition 2.2(3)]{HY95}), 
 and it can be parametrised as $x \mapsto (x,w(x))$, say for $0 \leq x \leq \zeta_0$.
 We can also assume that $\zeta_0$ is so small that $|\eta_0-w(x)| \leq \eta_0/10$
 for $0 \leq x \leq \zeta_0$. Let $\chi:\R \to [0,1]$ be a $C^{\infty}$ bump function supported on 
 $[-\eta_0/3, \eta_0/3]$ such that $\chi(0) = 1$ and $|\chi'|_\infty \leq 3$.
 Define the diffeomorphism
 $$
 \vartheta(x,y) = (\ x\ ,\ y + \chi(y-w(x)) (\eta_0-w(x)) \ )
 $$
 and set $\tilde f = \vartheta \circ f \circ \vartheta^{-1}$. For this diffeomorphism, $W^u(\eta_0)$ is
 a horizontal line.
 Then we apply the same argument to $\tilde f$, with the roles of $x$ and $y$
 interchanged, to straighten the local stable manifold $W^s(\zeta_0)$.
\end{proof}

\subsection{Reformulation of the set-up of the neutral fixed point}\label{sec:reformulation}

Fix an even $\kappa \in \N$. Consider the differential equation $\dot z = X(z)$ for $z = (x,y)$,
defined on $Q$ and vector field $X = (X_1, X_2)$ given by \eqref{eq:vf}.
Clearly $(0,0)$ is the unique stationary point, and the time one map 
of the flow $\Phi^t$ satisfies
\begin{equation}\label{eq:time1}
\Phi^1(x,y) = \left(x(1+a_0x^\kappa+a_2y^\kappa + O( |z|^{2\kappa}))  , 
 y(1-b_0x^\kappa-b_2y^\kappa + O( |z|^{2\kappa})) \right).
\end{equation}
This can be seen by applying the Taylor expansion
$$
(\Phi^t(z))_i = z_i + X_i(z) t + \frac{t^2}{2} \left(DX(\Phi^{s_i}(z)) X(\Phi^{s_i}(z)) \right)_i
\quad \text{ for some } s_i \in [0,t], i = 1,2,
$$
applied to $t = 1$, together with the fact that $X(z) = O(|z|^{\kappa+1})$ and $|DX(\Phi^s(z))| = O(|z|^{\kappa})$.
Here we used that $|\Phi^s(z)| \leq 2|z|$, which follows because the operator
$$
\Pi: C([0,t], \R^2) \to C([0,t], \R^2), \quad (\Pi f)(s) = z+\int_0^s X(f(u)) \ du
$$
maps the ball in $(C([0,t], \R^2), \| \, \|_\infty)$ of radius $2|z|$ into itself (proved via standard methods
to show the existence and uniqueness of solutions to initial value problems, \cite[Chapter 2]{Teschl}).

The point $(0,0)$ is a neutral fixed point of saddle type:
the horizontal axis is the unstable and the vertical axis is the 
stable manifold.

\begin{remark}
One can  reduce the exponent $\kappa$ to $1$ using the change of coordinates $(\bar x,\bar y)
= (x^\kappa, y^\kappa)$. This leads to the differential equation
\begin{equation}\label{eq:coorchange}
\begin{cases}
\dot {\bar x} = \kappa \bar x(a_0\bar x+a_2 \bar y), \\
\dot {\bar y} = -\kappa \bar y(b_0\bar x+b_2 \bar y).
\end{cases}
\end{equation}
and a similar expression for \eqref{eq:time1}. 
(In fact, also when $x$ and $y$ have their
own exponents $\kappa_x, \kappa_y > 0$, the change of coordinates $(\bar x,\bar y)
= (x^{\kappa_x}, y^{\kappa_y})$ leads to the analogue of \eqref{eq:coorchange}.)
However, this change of coordinates reduces a $C^{\kappa+2}$  vector field to a
$C^{3+\frac{1}{\kappa}}$ vector field, which may be awkward to work with.
Keeping the $\kappa$ all the way through retains some clarity in this respect.
\end{remark}

\begin{remark}\label{rem:mixed}
If mixed terms, say $a_1, b_1$, are introduced, then we have no explicit
form of the first integral $L$, see \eqref{eq:lf} below.
In fact, no such $L$ may exist if the mixed terms are too large.
For example, if the vector field has the form
 $$
 \begin{cases}
\dot x = x(a_0x^2 + a_1 xy + a_2y^2), \\
\dot y = -y(b_0x^2 + b_1 xy + b_2 y^2),
\end{cases}
$$
then the origin is no longer a simple neutral saddle point if 
$(a_1 + b_1)^2 \geq 4 (a_0 + b_0)(a_2 + b_2)$.
\end{remark}

Let $\eta_1$ be such that $\Phi^{-1}(0,\eta_0) = (0,\eta_1)$. 
For initial condition $(\xi,\eta) \in \partial Q$ with 
$\eta \in [\eta_0, \eta_1]$ and $0 < \xi < \zeta_0$,
define its {\em exit time} $T(\xi, \eta)$ by
$$
\Phi^{T(\xi,\eta)}(\xi, \eta) = (\zeta_0, \omega(\eta,T)) \in \partial Q,
$$
see Figure~\ref{fig:vf} for the case $\eta = \eta_0$.
We invert this relation to obtain, for fixed $\eta$, the asymptotic behaviour
of $\xi(\eta,T)$ (and $\omega(\eta,T)$ as a byproduct) as $T \to \infty$, 
because for integer values $T = n$ and $T=n-1$,
the curves $\eta \mapsto \xi(\eta,T)$ parametrise the vertical boundaries
of the strips $\{ \varphi = n\}$.

\begin{figure}[ht]
\begin{center}
\begin{tikzpicture}[scale=1.5]
\node at (4.1,-0.1) {\small $x$};
\node at (-0.1,4.1) {\small $y$};
\draw[->] (-0.5,0)--(4.2,0);
\draw[->] (0,-0.5)--(0,4.2);
\draw[-] (0,2.8)--(2.8,2.8); \node at (0.9,3) {\small $(\xi(\eta_0, T),\eta_0)$};
\node at (0.16,2.8) {\small $\bullet$}; 
\draw[-] (2.8,0)--(2.8,2.8); \node at (3.55,0.35) {\small $(\zeta_0,\omega(\eta_0, T))$};
\node at (2.8, 0.16) {\small $\bullet$}; 
\node at (-0.16,2.8) {\small $\eta_0$};
\node at (2.8, -0.13) {\small $\zeta_0$};
\draw[->, draw=blue] (0.1,4) .. controls (0.25,0.5) and (0.5,0.25) .. (4,0.1);
\end{tikzpicture}
\caption{A solution of \eqref{eq:vf} with points $(\xi(\eta_0, T),\eta_0)$
and $(\zeta_0,\omega(\eta_0, T))$ drawn in.}
\label{fig:vf}
\end{center}
\end{figure}
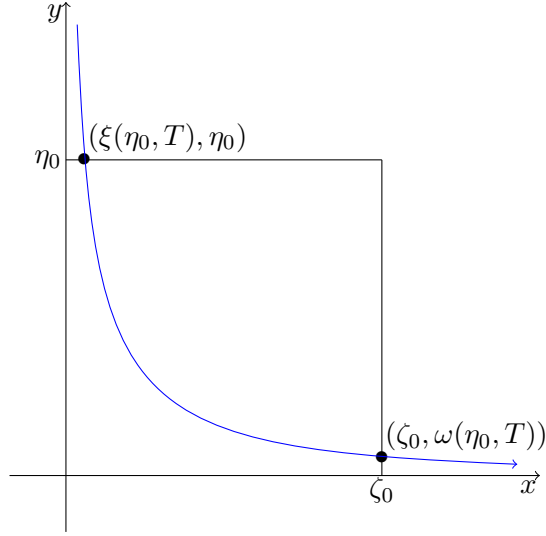 

For the vector field in \eqref{eq:vf}, i.e., without higher order terms,
very accurate estimates for $\xi(\eta,T)$ and $\omega(\eta,T)$
are given in Proposition~\ref{prop:regvar} below.
In itself, this gives no further benefit because the higher order asymptotics
are diluted by the properties of the invariant measure $\mu$,
mostly the densities $\frac{d\mu_{W^u}}{dm_{W^u}}$ conditioned to
unstable leaves $W^u$, of which we have no specific estimates.
However, Proposition~\ref{prop:regvar} sets the scene from which the general case follows
by tracking the effects of the perturbations.
\\[4mm]
Assume that $\Delta := a_2b_0 - a_0b_2 \neq 0$.
Let $u,v \in \R$ be the solutions of the linear equation
\begin{equation}\label{eq:uv}
\begin{cases}
(u+\kappa) a_0 = v b_0 \\
(v+\kappa)b_2 = u a_2
\end{cases}
\quad \text{ that is:}\quad
\begin{cases}
u = \frac{\kappa b_2}{\Delta}(a_0+b_0), \\[2mm]
v = \frac{\kappa a_0}{\Delta} (a_2+b_2).
\end{cases}
\end{equation}
Note that $u,v$ and $\Delta$ all have the same sign. Define:
\begin{equation}\label{eq:gamma}
\beta_0 := \frac{a_0+b_0}{\kappa a_0} = \frac{u+v+\kappa }{\kappa v}, \quad 
\beta_2 := \frac{a_2+b_2}{\kappa b_2} = \frac{u+v+\kappa }{\kappa u}, \quad
\begin{array}{l}c_0 = a_0 + b_0 \\ c_2 = a_2+b_2 \end{array},
\end{equation}
and note that $\frac{\beta_0}{\beta_2} = \frac{u}{v}$ and $\beta_0, \beta_2 > \frac{1}{\kappa}$ (or $=\frac{1}{\kappa}$ if 
we allow $b_0=0$ or $a_2=0$ respectively).
In the statements of the main theorems in Section~\ref{sec:mainresults}, $\beta = \beta_2$.

Recall that in our choice of coordinates
 $\{ \zeta_0 \} \times [0,\eta_0]$ is a local stable leaf.
Therefore also the curves $W_T := \xi(\eta,T),\eta)$ are local stable leaves, simply because
 $\Phi^T(W_T) \subset \{ 1 \} \times [0,\eta_0]$.
The next proposition establishes precise estimates for the parametrisation $\eta \mapsto \xi(\eta,T)$
of such local stable leaves (i.e., vertical boundaries of the strips $\{ \varphi = n\}$):

\begin{prop}\label{prop:regvar}
Consider a vector field on the $2$-torus with local form \eqref{eq:vf}
for $a_0, a_2, b_0, b_2 \geq 0$ and $\Delta \neq 0$.
There are functions $\xi_0(\eta), \omega_0(\eta), \xi_1(\eta), \omega_1(\eta) > 0$ 
independent of $T$ (with exact expressions given in the proof) 
such that 
$$
\xi(\eta, T) = \xi_0(\eta) T^{-\beta_2} \left(1 - \xi_1(\eta) T^{-1} + O(T^{-2}, T^{-\kappa \beta_2} )\right)
$$
and 
$$
\omega(\eta, T) = \omega_0(\eta) T^{-\beta_0} \left(1 - \omega_1(\eta) T^{-1} + O(T^{-2}, T^{-\kappa \beta_0} )\right).
$$
\end{prop}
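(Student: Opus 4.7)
The plan is to exploit a conserved quantity for the flow \eqref{eq:vf}. A direct computation using the relations $(u+\kappa)a_0 = vb_0$ and $(v+\kappa)b_2 = ua_2$ from \eqref{eq:uv} shows that
$$L(x,y) := x^u y^v \bigl(c_0 x^\kappa + c_2 y^\kappa\bigr)$$
is a first integral of \eqref{eq:vf}: in $\dot L/L$ the $x^{2\kappa}$ and $y^{2\kappa}$ contributions cancel thanks to these relations, and the remaining cross term $x^\kappa y^\kappa$ collapses because $c_0(a_2+b_2) = c_2(a_0+b_0)$.

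Conservation $L(\xi,\eta) = L(\zeta_0,\omega) =: L_0$ reads
$$c_2\xi^u\eta^{v+\kappa}\bigl(1 + \tfrac{c_0}{c_2}(\xi/\eta)^\kappa\bigr) = c_0\zeta_0^{u+\kappa}\omega^v\bigl(1 + \tfrac{c_2}{c_0}(\omega/\zeta_0)^\kappa\bigr).$$
So once $L_0$ is known as a function of $T$, I can solve for $\xi$ and $\omega$ separately; the relative error in doing so is $O(\xi^\kappa)$ and $O(\omega^\kappa)$, which will convert to $O(T^{-\kappa\beta_2})$ and $O(T^{-\kappa\beta_0})$ once the leading behaviour is known. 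It therefore remains to determine $L_0$ as a function of $T$.

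For this I pass to coordinates $p = x^\kappa, q = y^\kappa$, in which the flow becomes a planar Lotka--Volterra-type system, $L$ retains the form $p^{u/\kappa}q^{v/\kappa}(c_0 p + c_2 q)$, and $q$ is monotone decreasing along the orbit. Parametrising by $q$ gives
$$\kappa T = \int_{\omega^\kappa}^{\eta^\kappa} \frac{dq}{q\bigl(b_0 p(q) + b_2 q\bigr)},$$
where $p(q)$ is defined implicitly by $L(p,q)=L_0$. I split the integral at the crossover $\tilde p$ characterised by $p=q=\tilde p$, equivalently $\tilde p^{M/\kappa} = L_0/(c_0+c_2)$ with $M := u+v+\kappa$. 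On the upper branch $q>\tilde p$ one has $p\ll q$, so $p \approx (L_0/c_2)^{\kappa/u}q^{-(v+\kappa)/u}$ and $b_0p+b_2q \approx b_2q$, contributing $\tfrac{1}{b_2}(\tilde p^{-1}-\eta^{-\kappa})$. On the lower branch $q<\tilde p$ one has $p\gg q$; switching the integration variable to $p$ and using $(u+\kappa)a_0=vb_0$ produces a contribution $\tfrac{1}{a_0}(\tilde p^{-1}-\zeta_0^{-\kappa})$. Summing and inverting gives
$$\tilde p = \frac{a_0+b_2}{\kappa a_0 b_2}\,T^{-1}\bigl(1 + O(T^{-1})\bigr),$$
so $L_0 = (c_0+c_2)\tilde p^{M/\kappa}$ is of order $T^{-M/\kappa}$.

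Feeding $L_0$ back into the two branches of the first integral and inverting yields the asymptotics with explicit $\xi_0(\eta), \omega_0(\eta)$ (prefactors built from $a_0, b_2, c_0, c_2, \eta, \zeta_0$), the $T^{-1}$ corrections $\xi_1(\eta), \omega_1(\eta)$ coming from the subleading $-\eta^{-\kappa}, -\zeta_0^{-\kappa}$ terms above and from the $O(T^{-1})$ relative error in $\tilde p$, and the $T^{-\kappa\beta_2}$ (resp.\ $T^{-\kappa\beta_0}$) error inherited from the $O(\xi^\kappa)$ (resp.\ $O(\omega^\kappa)$) term in the expansion of $L_0$; the identities $\beta_2 = M/(\kappa u)$ and $\beta_0 = M/(\kappa v)$ from \eqref{eq:gamma} produce the claimed exponents. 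The main obstacle I anticipate is the matching region $p \asymp q \asymp \tilde p$, where neither asymptotic form of $p(q)$ is accurate: this zone must contribute only $O(1)$ to $\kappa T$ so that after dividing by $\tilde p^{-1}\sim T$ it gets absorbed into the $O(T^{-1})$ error. This should follow from a uniform bound on $|p(q)-q|$ on an interval of $q$-width comparable to $\tilde p$, obtained either by an explicit interpolating expansion of $p(q)$ or by passing to rescaled variables $(p/\tilde p, q/\tilde p)$ in which the problem is compact.
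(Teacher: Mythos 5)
Your setup is sound up to the point where you invert the time integral: your $x^uy^v(c_0x^\kappa+c_2y^\kappa)$ is indeed a constant multiple of the first integral in Lemma~\ref{lem:lf}, the identity $\kappa T=\int_{\omega^\kappa}^{\eta^\kappa}\frac{dq}{q(b_0p(q)+b_2q)}$ is correct, and the exponents $\beta_0,\beta_2$ would come out right. The gap is exactly at the step you flag at the end, and it is fatal rather than technical: the crossover region $p\asymp q\asymp\tilde p$ contributes $\Theta(\tilde p^{-1})=\Theta(T)$ to $\kappa T$, not $O(1)$. Indeed, there the integrand $\frac{1}{q(b_0p+b_2q)}$ is of size $\tilde p^{-2}$ over a $q$-interval of length $\asymp\tilde p$; equivalently, the relative error made in replacing $b_0p+b_2q$ by $b_2q$ is of order $1$ near $q=\tilde p$, and its integral against $\frac{dq}{b_2q^2}$ is again $\asymp\tilde p^{-1}$. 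Consequently your leading relation $\kappa T\sim(\frac{1}{a_0}+\frac{1}{b_2})\tilde p^{-1}$ has the wrong constant, and so do $L_0$, $\xi_0(\eta)$ and $\omega_0(\eta)$. A concrete check: for $\kappa=2$, $a_0=b_2=1$, $a_2=b_0=2$ one has $u=v=2$, $M=6$, and passing to $s=p/q$ gives exactly $\kappa T=\frac{\kappa}{M}L_0^{-\kappa/M}\int_0^\infty 3^{1/3}s^{-2/3}(1+s)^{-2/3}\,ds\,(1+o(1))=\frac{3^{1/3}}{3}B(\tfrac13,\tfrac13)\,L_0^{-1/3}(1+o(1))\approx 1.40\,\tilde p^{-1}$ (using $L_0=6\,\tilde p^{3}$), whereas your two-branch computation gives $2\,\tilde p^{-1}$.

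The repair is not to split at the crossover at all: in the ratio variable ($s=p/q$, or the paper's $M=y/x$) the full integral over $(0,\infty)$ converges, and its value --- a Beta-type integral depending nontrivially on all four parameters, cf.\ \eqref{eq:M4} and \eqref{eq:xi0} --- is the correct constant; your branch asymptotics are only the $s\to0$ and $s\to\infty$ tails of that integrand. Once this is done you have essentially reproduced the paper's proof. Your treatment of the $T^{-1}$ corrections via the boundary terms $-\eta^{-\kappa}/b_2$ and $-\zeta_0^{-\kappa}/a_0$ does identify the right sources (compare the paper's $\xi_1(\eta)=\frac{\beta_2}{\kappa}(\frac{1}{a_0\zeta_0^\kappa}+\frac{1}{b_2\eta^\kappa})$), but the prefactor again requires the convergent-integral normalisation, so $\xi_1,\omega_1$ would also have to be recomputed; moreover controlling the error down to $O(T^{-2},T^{-\kappa\beta_2})$ requires a second-order expansion of the crossover contribution, which your scheme does not provide. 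Finally, for $\Delta<0$ one has $u,v<0$ (the paper then works with $1/L$), so your formulas need to be adapted in that case.
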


\begin{remark}
 If $a_2 = 0$ (resp.\ $b_2=0$), then the term $O(T^{-\kappa \beta_2}) = O(T^{-1})$
 (resp.\ the term $O(T^{-\kappa \beta_0}) = O(T^{-1})$) and hence the term
 $\xi_1(\eta) T^{-1}$ (resp.\ $\omega_1(\eta) T^{-1})$) is no longer an 
 exact asymptotic in our estimates.
\end{remark}

\begin{remark}\label{rem:HZ}
 In \cite[Proposition 4.1]{HuZhangprepr}, it is shown that for $\kappa = 2$, $\eta = 1$ and $T = n \in \N$,
 $D_\beta n^{-1/\beta} \leq \xi(1,n) \leq  D_\alpha n^{-1/\alpha}$,
 for any choice of $\alpha, \beta$ satisfying
 $\beta < \frac{2a_2 b_2}{a_2^2 + a_2b_2 + b_2^2} < \frac{2b_2}{a_2} < \alpha$,
 and $D_\alpha, D_\beta > 0$ are independent of $n$.
 Proposition~\ref{prop:regvar} corresponds to the value $\frac{1}{\beta_2} = \frac{2b_2}{a_2+b_2}$, which lies
 in $[\frac{2a_2 b_2}{a_2^2 + a_2b_2 + b_2^2} \ , \ \frac{2b_2}{a_2}]$,
 so Proposition~\ref{prop:regvar} confirms \cite{HuZhangprepr} and makes it more precise.
\end{remark}

\begin{remark}
 The solution of \eqref{eq:vf} with initial condition $(\zeta_0,0)$ and $t \leq 0$ is
 $(x(t), y(t)) = ( (\zeta_0-\kappa a_0 t)^{-\frac1\kappa }\ , \ 0)$,
 so $x(-T) \sim T^{-\frac1\kappa }$.
 Then $(\xi(\eta_0, T),\eta_0)$ and $(x(-T), 0)$ lie on the same stable leave.
 Therefore the holonomy map $\pi: (\xi(\eta_0, T), \eta_0) \mapsto (x(-T), 0)$ 
 along the stable foliation
 is at best H\"older continuous with exponent $\frac{b_2}{a_2+b_2}$.
 This exponent tends to $1$ as $a_2 \to 0$ or $b_2 \to \infty$, as one should expect.
 Namely, if $a_2 = 0$, then the first equation of \eqref{eq:vf} is decoupled
 and can be solved directly, see also \cite{AA13} where a product type almost Anosov
 is taken as an example, and upper bounds for mixing rates are
 found using Young towers.
 For initial condition $(0, \eta_0)$,
 equation \eqref{eq:vf} his solved by $(x(t), y(t)) = (0, (b_2\kappa t + \eta_0^{-\kappa})^{-\frac1\kappa})$.
 Therefore the limit case $b_2\to\infty$ corresponds to arbitrary fast contraction along stable leaves, 
 and it is known that already exponential contraction in the stable direction produces a Lipschitz stable foliation.
\end{remark}

\noindent
The plan of action is:
\begin{enumerate}
 \item Find a first integral $L = L(x,y)$ for \eqref{eq:vf}, so that the orbits are
 confined to level sets of $L$.
 This allows also to compute the exit point $(\zeta_0,\omega) = \Phi^T(\xi,\eta_0)$
 simply because $L(\xi,\eta_0) = L(\zeta_0,\omega)$. This is done in Lemma~\ref{lem:lf} in
 Section~\ref{sec:lya}.
 \item Take the new coordinate $M = M(x,y) = y/x$, which reduces 
 \eqref{eq:vf} to a one-dimensional differential equation
\begin{equation}\label{eq:vf1}
\dot M = Z(M,L). 
\end{equation}
\item Solve \eqref{eq:vf1}, or integrate its inverse $\frac{dt}{dM} = \frac{1}{ Z(M,L) }$
to compute $T = \int_{M(\xi,\eta_0)}^{M(\zeta_0,\omega)}\frac{1}{ Z(M,L) }\ dM$
for $L = L(\xi,\eta_0)$. This and the previous step are done in Section~\ref{sec:asymp}.
\item Perturb \eqref{eq:vf} by including higher order terms, and follow the proof
of Proposition~\ref{prop:regvar} in detail to estimate the effect of this perturbation.
This is done in Section~\ref{sec:perturb} and leads to the proof of Theorem~\ref{thm:regvarmu_special}.
\item Study the transition from diffeomorphism $f$ to vector field $X$.
If $f$ is $C^\infty$ and has the form \eqref{eq:diffeo}, then it can always be written as
the time-$1$ map of a vector field of the form \eqref{eq:vfhot},
see \cite[Theorem B and consequence 1(ii) on page 36)]{DRR}. 
\item The $C^{\kappa+2}$ setting requires an extra argument
in which the vector field is approximated by $C^\infty$ vector fields with the same $\kappa+2$-jet
(i.e., the same Taylor expansion truncated at the $\lceil \kappa+2 \rceil$-nd term).
Section~\ref{sec:proof} contains this argument, and also the final step passing from length estimates
to estimates in terms of the invariant measure $\mu$.
This leads to the proof of Theorem~\ref{thm:regvarmu}.
\end{enumerate}

\subsection{First integral}\label{sec:lya}

\begin{lemma}\label{lem:lf} Recall $u,v$ and $\Delta$ from \eqref{eq:uv}.
The function
\begin{equation}\label{eq:lf}
L(x,y) = 
\begin{cases}
  x^u y^v ( \frac{a_0}{v}\ x^\kappa  + \frac{b_2}{u}\ y^\kappa ) & \text{ if } \Delta > 0;\\
  x^{-u} y^{-v} ( \frac{a_0}{v}\ x^\kappa  + \frac{b_2}{u}\ y^\kappa )^{-1} & \text{ if } \Delta < 0.
\end{cases}
\end{equation}
is a first integral of \eqref{eq:vf}.
\end{lemma}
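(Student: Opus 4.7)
The claim is a direct computational verification: I will show $\dot L \equiv 0$ along trajectories of \eqref{eq:vf}, using the linear relations in \eqref{eq:uv}. It suffices to treat the case $\Delta > 0$, because the $\Delta < 0$ formula is just the reciprocal $1/(x^u y^v G)$ with $G := \frac{a_0}{v} x^\kappa + \frac{b_2}{u} y^\kappa$, and any first integral has its reciprocal (where defined) as a first integral as well. (Note also that $u,v$ share the sign of $\Delta$, so in either case $x^{\pm u} y^{\pm v}$ is well-defined in the open first quadrant.)

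So set $L = x^u y^v G$ with $G$ as above and compute
\begin{align*}
\partial_x L &= x^{u-1} y^v\bigl(uG + \tfrac{a_0 \kappa}{v} x^\kappa\bigr), \qquad
\partial_y L = x^u y^{v-1}\bigl(vG + \tfrac{b_2 \kappa}{u} y^\kappa\bigr).
\end{align*}
Plugging in $\dot x = x(a_0 x^\kappa + a_2 y^\kappa)$ and $\dot y = -y(b_0 x^\kappa + b_2 y^\kappa)$ and pulling out the common factor $x^u y^v$ reduces the claim $\dot L = 0$ to the polynomial identity
$$
\bigl(uG + \tfrac{a_0\kappa}{v} x^\kappa\bigr)(a_0 x^\kappa + a_2 y^\kappa) \;=\; \bigl(vG + \tfrac{b_2\kappa}{u} y^\kappa\bigr)(b_0 x^\kappa + b_2 y^\kappa).
$$
The plan is then to expand both sides and compare the three monomials $x^{2\kappa}$, $y^{2\kappa}$, $x^\kappa y^\kappa$. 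Using $uG = \frac{u a_0}{v} x^\kappa + b_2 y^\kappa$ and $vG = a_0 x^\kappa + \frac{v b_2}{u} y^\kappa$, the coefficient of $x^{2\kappa}$ on the LHS$-$RHS is $\frac{a_0}{v}\bigl(a_0(u+\kappa) - v b_0\bigr)$, which vanishes by the first equation of \eqref{eq:uv}; the coefficient of $y^{2\kappa}$ is $\frac{b_2}{u}\bigl(u a_2 - b_2(v+\kappa)\bigr)$, which vanishes by the second equation; and the cross coefficient reduces to $\frac{a_0 a_2 (u+\kappa)}{v} - \frac{b_0 b_2 (v+\kappa)}{u}$, which by substituting $v b_0$ for $a_0(u+\kappa)$ and $u a_2$ for $b_2(v+\kappa)$ becomes $a_2 b_0 - a_2 b_0 = 0$.

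There is no real obstacle here; the whole argument is a bookkeeping exercise, and the only step that requires care is making sure the $a_0 b_2$ terms arising from $uG \cdot a_0 x^\kappa$ and $vG \cdot b_2 y^\kappa$ cancel between the two sides before invoking \eqref{eq:uv} for the remaining pieces. Once the three monomials are checked, $\dot L = 0$ follows identically on the first quadrant, and by continuity on $Q$; the case $\Delta < 0$ follows as observed above.
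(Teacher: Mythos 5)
Your proposal is correct and is essentially the paper's own argument: a direct verification that the Lie derivative $\dot L = \langle \nabla L, X\rangle$ vanishes using the relations \eqref{eq:uv}, with the $\Delta<0$ case handled by noting that the reciprocal of a first integral is again a first integral. The only cosmetic difference is that the paper pre-substitutes \eqref{eq:uv} to write $L$ in two equivalent forms before differentiating, whereas you expand once and match the three monomial coefficients; both computations check out.
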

This means that solutions of \eqref{eq:vf} are confined to level
sets $\{ L(x,y) = L_0\}$ for $L_0 \in \R$. For $L_0 = 0$, 
this level set is the union of the coordinate axes,
and (as used in Proposition~\ref{prop:perturb} below) $L$ is H\"older continuous near the axes.
For $L_0 > 0$, the level set $\{ L(x,y) = L_0\}$ roughly resembles a hyperbola in the first
quadrant, with the coordinate axes as asymptotes, see Figure~\ref{fig:vf}.
\\[3mm]
\begin{proofof}{Lemma~\ref{lem:lf}}
First assume $\Delta > 0$, so $u,v > 0$ as well.
By \eqref{eq:uv}, we can write $L(x,y)$ as
$$
L(x,y) = x^u y^v ( \frac{b_0}{u+\kappa }\ x^\kappa  + \frac{b_2}{u}\ y^\kappa )
= x^u y^v ( \frac{a_0}{v}\ x^\kappa  + \frac{a_2}{v+\kappa }\ y^\kappa ).
$$
Using these two equivalent expressions, we
compute the Lie derivative directly
\begin{eqnarray*}
\dot L = \langle \nabla L , X \rangle &=&
\left(\frac{b_0}{u+\kappa } (u+\kappa ) x^{u+\kappa-1} y^v + \frac{b_2}{u} u x^{u-1} y^{v+\kappa }\right)
x(a_0 x^\kappa +a_2 y^\kappa )\\
&& - \left(\frac{a_0}{v} x^{u+\kappa } vy^{v-1} + \frac{a_2}{v+\kappa } x^u (v+\kappa )y^{v+\kappa-1}\right)
y(b_0 x^\kappa +b_2 y^\kappa) = 0.
\end{eqnarray*}
Any function of a first integral is a first integral, in particular this holds for $1/L$.
Therefore the conclusion is immediate for $\Delta < 0$ too.
\end{proofof}

\subsection{The exact asymptotics for the vector field \eqref{eq:vf}}\label{sec:asymp}

\begin{proofof}{Proposition~\ref{prop:regvar}}
We carry out the proof for $\Delta > 0$, so 
$L(x,y) =  x^u y^v ( \frac{a_0}{v}\ x^\kappa  + \frac{b_2}{u}\ y^\kappa )$ as in Lemma~\ref{lem:lf}.
The case $\Delta < 0$ goes likewise.
Fix $\eta$ such that $(\xi(\eta,T), \eta) \in \overline{\Phi^{-1}(Q) \setminus Q}$.
For simplicity of notation, we will suppress the $\eta$ and $T$ in $\xi(\eta,T)$.
We use the variable $M = y/x$, so $y = Mx$ and differentiating gives
$\dot y = \dot M x + M \dot x$.
Recalling that $c_0 = a_0+b_0$ and $c_2 = a_2+b_2$ and inserting the values for $\dot x$ and $\dot y$ from \eqref{eq:vf},
we get
\begin{equation}\label{eq:M0}
\dot M = -M( c_0 + c_2 M^\kappa ) x^\kappa .
\end{equation}
Assume that we are in the level set 
$L(x,y) =  L(\xi,\eta) = \xi^u \eta^v(\frac{a_0}{v} \xi^\kappa  + \frac{b_2}{u}\eta^\kappa )$, then
we can solve for $x^\kappa $ in the expression
$$
\xi^u\eta^v(\frac{a_0}{v} \xi^\kappa  + \frac{b_2}{u}\eta^\kappa ) = x^u y^v (\frac{a_0}{v} x^\kappa  + \frac{b_2}{u} y^\kappa ) 
= x^{u+v+\kappa } M^v (\frac{a_0}{v} + \frac{b_2}{u} M^\kappa ).
$$
Use \eqref{eq:uv} and \eqref{eq:gamma} to obtain
$$
\frac{a_0}{v} + \frac{b_2}{u} M^\kappa  = \frac{\Delta}{\kappa c_0c_2} ( c_0+c_2M^\kappa )
\quad \text{ and }\quad 
\frac{a_0 \xi^\kappa}{v}  + \frac{b_2 \eta^\kappa }{u} = \frac{\Delta}{\kappa c_0c_2} ( c_0 \xi^\kappa +c_2 \eta^\kappa ).
$$
This gives 
$x^\kappa  = \xi^{\frac{\kappa u}{u+v+\kappa }} 
\eta^{\frac{\kappa v}{u+v+\kappa }} M^{-\frac{\kappa v}{u+v+\kappa }}
\left(\frac{c_0\xi^\kappa +c_2\eta^\kappa }{c_0+c_2M^\kappa } \right)^{\frac{\kappa }{u+v+\kappa }}$
and, combined with \eqref{eq:M0},
$$
\dot M = -M^{1-\frac{\kappa v}{u+v+\kappa }} (c_0+c_2M^\kappa )^{1-\frac{\kappa }{u+v+\kappa }} \
\xi^{\frac{\kappa u}{u+v+\kappa }} \ \eta^{\frac{\kappa v}{u+v+\kappa } } \ 
(c_0\xi^\kappa +c_2\eta^\kappa )^{\frac{\kappa }{u+v+\kappa }}.
$$
Recall
$\beta_0 = \frac{u+v+\kappa }{\kappa v}$ and $\beta_2 = \frac{u+v+\kappa }{\kappa u}$ 
from \eqref{eq:gamma} (which also gives $1-\frac{\kappa }{u+v+\kappa } = \frac{1}{\kappa \beta_0} + \frac{1}{\kappa \beta_2}$)
to simplify this to
\begin{equation}\label{eq:M3}
 \dot M = -G M^{1-\frac{1}{\beta_0}} 
 \left(c_0+c_2M^\kappa  \right)^{\frac{1}{\kappa \beta_0} + \frac{1}{\kappa \beta_2}}
\end{equation}
with
\begin{equation}\label{eq:Exi}
G = G(\xi,\eta) := \xi^{ \frac{1}{\beta_2} } \eta^{ \frac{1}{\beta_0} }
\left( c_0\xi^\kappa  + c_2\eta^\kappa  \right)^{1-\frac{1}{\kappa \beta_0}-\frac{1}{\kappa \beta_2}}.
\end{equation}
For the exit time $T \geq 0$, recall that $\xi(\eta,T)$ and $\omega(\eta,T)$ are such that 
the solution of \eqref{eq:vf} satisfies
$(x(0), y(0)) = (\xi(\eta,T), \eta)$ and $(x(T), y(T)) = (\zeta_0,\omega(\eta,T))$.
This implies $M(0) = \eta/\xi(\eta,T)$ and $M(T) = \omega(\eta,T)/\zeta_0$.
Inserting this in \eqref{eq:M3}, separating variables, and integrating we get
\begin{equation}\label{eq:M4}
\int_{\omega(\eta,T)/\zeta_0}^{\eta/\xi(\eta,T)} \frac{dM}
{  M^{1-\frac{1}{\beta_0}}  \left(c_0+c_2 M^\kappa  \right)^{ \frac{1}{\kappa \beta_0} + \frac{1}{\kappa \beta_2} } }
= G(\xi(\eta,T), \eta) T.
\end{equation}
In the rest of the proof, we will frequently suppress the dependence on $\eta$ and $T$ in $\xi(\eta,T)$
and $\omega(\eta,T)$. We know that 
$L(\xi(\eta, T), \eta) = \xi^u \eta^v(\frac{a_0}{v} \xi^\kappa  + \frac{b_2}{u}\eta^\kappa ) = 
\zeta_0^u \omega^v (\frac{a_0}{v} \zeta_0^\kappa + \frac{b_2}{u} \omega^\kappa )
= L(\eta,\omega(\eta,T))$,
which gives
\begin{equation}\label{eq:xieta}
\xi^u \eta^v (c_0\xi^\kappa +c_2\eta^\kappa ) = \zeta_0^u \omega^v(c_0\zeta_0^\kappa+c_2\omega^\kappa ).
\end{equation}
From their definition, $\xi(\eta,T)$ and $\omega(\eta, T)$ are clearly decreasing in $T$,
so their $T$-derivatives $\xi'(\eta,T), \omega'(\eta,T) \leq 0$. 
Since $c_0, c_2 > 0$ (otherwise $\Delta = 0$), the integrand of \eqref{eq:M4} is $O(M^{\frac{1}{\beta_0}-1})$ as $M \to 0$ and 
$O(M^{-\frac{1}{\beta_2}-1})$ as $M \to \infty$. Hence the integral is increasing and bounded in $T$.
But this means that $C_{\xi(\eta,T)} T$ is increasing and bounded as well.
Let $g(\eta,T) = \xi(\eta,T) T^{\beta_2}$. Since
$$
G(\xi(\eta,T), \eta) T  = g(\eta,T)^{\frac{1}{\beta_2}} \eta^{\frac{1}{\beta_0}} (c_0 g(\eta,T)^\kappa  T^{-\kappa  \beta_2}
+ c_2 \eta^\kappa )^{1-\frac{1}{\kappa \beta_0} - \frac{1}{\kappa \beta_2}},
$$
and $1-\frac{1}{\kappa \beta_0} - \frac{1}{\kappa \beta_2} > 0$,
we find that $g(\eta,T)$ converges
\footnote{For the symmetric statement on $\omega(\eta,T)$, define $\hat g(\eta,T) = \omega(\eta, T) T^{\beta_0}$.
Then $\lim_{T \to \infty} \hat g(\eta,T) = \lim_{T \to \infty} g(\eta,T)^{ \beta_0/\beta_2 } \eta^{1+\kappa /v} 
\zeta_0^{-b_0/a_0} (\frac{c_2}{c_0})^{1/v}$.}:
\begin{equation}\label{eq:xi0}
\xi_0(\eta) := \lim_{T\to\infty} g(\eta,T) = c_2^{-\frac1u} \eta^{- \frac{a_2}{b_2} } \left( \int_0^\infty \frac{ dM }
{  M^{1-\frac{1}{\beta_0}}  \left( c_0 + c_2 M^\kappa  \right)^{ \frac{1}{\kappa \beta_0} + \frac{1}{\kappa \beta_2} } }
\right)^{\beta_2} ,
\end{equation}
where we have used $-\beta_2(1-\frac{1}{\kappa \beta_0} - \frac{1}{\kappa \beta_2}) = -\frac{\kappa \beta_2}{u+v+\kappa } = -\frac{1}{u}$
for the exponent of $c_2$, and $\frac{\kappa }{u} + \frac{\beta_2}{\beta_0} = \frac{v+\kappa }{u} = \frac{a_2}{b_2}$ 
for the exponent of $\eta$.

We continue the proof to get higher asymptotics.
Differentiating \eqref{eq:M4} w.r.t.\ $T$ gives
\begin{equation}\label{eq:Mprime}
-\frac{ \eta^{\frac{1}{\beta_0}} \xi^{\frac{1}{\beta_2}-1} \xi'}{(c_0 \xi^\kappa  + c_2\eta^\kappa )^{\frac{1}{\kappa \beta_0} + \frac{1}{\kappa \beta_2}}}
-\frac{\zeta_0^{\frac{1}{\beta_2}} \omega^{\frac{1}{\beta_0}-1} \omega'}{(c_0\zeta_0^\kappa + c_2 \omega^\kappa )^{\frac{1}{\kappa \beta_0} + \frac{1}{\kappa \beta_2}}}
=  \frac{\partial G(\xi,\eta)}{\partial\xi} T \xi'  + G(\xi,\eta),
\end{equation}
where (by differentiating \eqref{eq:Exi})
$$
\frac{\partial G(\xi,\eta)}{\partial\xi}  = \kappa  (b_0\xi^\kappa +b_2\eta^\kappa ) \xi^{\frac{1}{\beta_2}-1} \eta^{\frac{1}{\beta_0}}
 (c_0\xi^\kappa  + c_2 \eta^\kappa )^{-\frac{1}{\kappa \beta_0} - \frac{1}{\kappa \beta_2}}.
$$
Combined with \eqref{eq:xieta} and \eqref{eq:Mprime} this gives
\begin{equation}\label{eq:xi2}
-\eta^{\frac{1}{\beta_0}} \xi' - \zeta_0^{\frac{1}{\beta_2}}
\left(\frac{\zeta_0^u \omega^v}{\xi^u \eta^v} \right)^{\frac{1}{\kappa \beta_0} + \frac{1}{\kappa \beta_2}} 
\frac{\omega^{\frac{1}{\beta_0}-1}}{\xi^{\frac{1}{\beta_2}-1}}  \omega' =
\kappa (b_0 \xi^\kappa  + b_2\eta^\kappa ) T \eta^{\frac{1}{\beta_0}} \xi' + \eta^{\frac{1}{\beta_0}} \xi (c_0 \xi^\kappa +c_2 \eta^\kappa ).
\end{equation}
Because $\frac{1}{\kappa \beta_0} + \frac{1}{\kappa \beta_2} - 1 = -\frac{\kappa }{u+v+\kappa }$, using
\eqref{eq:gamma} and dividing by $\eta^{\frac{1}{\beta_0}}$, we can simplify \eqref{eq:xi2} to 
\begin{equation}\label{eq:xi2a}
- \xi' - \frac{\zeta_0^u}{\eta^v} \frac{\omega^{v-1}}{\xi^{u-1}} \omega' =
\kappa (b_0 \xi^\kappa  + b_2\eta^\kappa ) T \xi' + \xi (c_0 \xi^\kappa +c_2 \eta^\kappa ).
\end{equation}
Taking the derivative of \eqref{eq:xieta} w.r.t.\ $T$  and multiplying with $\Delta/(\kappa c_0c_2)$ gives
$$
( b_0 \xi^\kappa + b_2\eta^\kappa ) \eta^v \xi^{u-1} \xi' = (a_0\zeta_0^\kappa + a_2 \omega^\kappa )\zeta_0^u \omega^{v-1} \omega'.
$$
Hence, we can rewrite \eqref{eq:xi2a} as
$$
-(1 +\frac{ b_0 \xi^\kappa + b_2\eta^\kappa  }{a_0 \zeta_0^\kappa+ a_2 \omega^\kappa }) \xi' =
\kappa (b_0 \xi^\kappa  + b_2 \eta^\kappa ) T  \xi' +  \xi (c_0 \xi^\kappa +c_2 \eta^\kappa ).
$$
We insert $\xi' = g'(T) T^{-\beta_2} - \beta_2 g(T) T^{-(1+\beta_2)}$ and multiply with $T^{\beta_2}$,
which leads to
\begin{equation*}
-(1+\frac{b_0 \xi^\kappa +b_2\eta^\kappa }{a_0\zeta_0^\kappa +a_2\omega^\kappa }) (g'(T) - \beta_2 g(T) T^{-1})
 = \kappa (b_0\xi^\kappa  + b_2\eta^\kappa ) g'(T) \ T 
-\frac{\Delta}{b_2} g(T)^{\kappa+1} T^{-\kappa \beta_2}.
\end{equation*}
Since $\xi = O(T^{-\beta_2})$ and $\omega = O(T^{-\beta_0})$, we can write this differential equation as
$$
\frac{g'}{g} = \frac1{T^2} \frac{\beta_2}{\kappa} 
\frac{\frac{a_0\zeta_0^\kappa +b_2 \eta^\kappa + O(T^{-\kappa \beta_2})}{a_0\zeta_0^\kappa +O(T^{-\kappa \beta_0})}
-\frac{\Delta}{b_2} g(T)^\kappa T^{-\frac{a_2}{b_2}} }
{b_2 \eta^\kappa + O(T^{-\kappa \beta_2}) + O(T^{-1})}.
$$
Keeping the leading
terms only (where we use that $\kappa \beta_2, \kappa \beta_0 > 1$), we get the differential equation
$$
\frac{g'}{g} = (\xi_1(\eta) + O(\max\{T^{-1}, T^{-\frac{a_2}{b_2}}\} ) ) \frac{1}{T^2} \quad \text{ for }
\xi_1 = \xi_1(\eta) := \frac{\beta_2}{\kappa} \left( \frac{1}{a_0\zeta_0^\kappa}+ \frac{1}{b_2\eta^\kappa } \right).
$$
Using the limit boundary value $\xi_0 = \xi_0(\eta) = \lim_{T \to \infty} g(\eta,T)$, we find the solution
$$
g(\eta,T) = \xi_0 e^{-(\xi_1 + O(\max\{T^{-1}, T^{-\frac{a_2}{b_2}}\} ) )T^{-1} } 
= \xi_0( 1 - \xi_1 T^{-1}  + O(\max\{T^{-2}, T^{-\kappa \beta_2}\} ) )
$$
as required. The analogous asymptotics for $\omega$ and the constants $\omega_0$ and $\omega_1$
can be derived by changing the time direction and the roles 
$(a_0, a_2) \leftrightarrow (b_2, b_0)$, and also by the relation $\xi^u \eta^{v+\kappa }c_2 \sim \zeta_0^{u+\kappa} \omega^v c_0$
from \eqref{eq:xieta}:
$$
\omega_0(\eta) := c_0^{-1/v} \left( \int_0^\infty \frac{ dM }
{  M^{1-\frac{1}{\beta_2}}  \left( c_0 M^\kappa  + c_2 \right)^{ \frac{1}{\kappa \beta_0} + \frac{1}{\kappa \beta_2} } }
\right)^{\beta_0}
\ \text{ and } \ \omega_1(\eta) := \frac{\beta_0}{\kappa } \left(\frac{1}{b_2\zeta_0^\kappa}+ \frac{1}{a_0\eta^\kappa }\right).
$$
This concludes the proof.
\end{proofof}

Recall that the strip $\{\varphi = n\}$ is bounded by the unstable curve
$W^u = \{ y = \eta_0\}$ forming the upper boundary of $Q$,
its preimage $\Phi^{-1}(\{ y = \eta_0 \})$ (see $f^{-1}(W^u)$ in Figure~\ref{fig:leaves})
and the stable curves
through the points $\xi(\eta_0,n)$ and $\xi(\eta_0,n-1)$.
Let $\eta_1$ be such that $\Phi^{-1}(\{ y = \eta_0 \})$ intersect the vertical axis at $(0,\eta_1)$.
Proposition~\ref{prop:regvar} gave us the estimates for $\xi(\eta,n)$ for $\eta_0 \leq \eta \leq \eta_1$.
Therefore, the remaining step is to pass from the Lebesgue measure $m(\varphi = n)$ 
to the SRB-measure $\mu(\varphi = n)$.
\medskip

\begin{proofof}{Theorem~\ref{thm:regvarmu_special}}
Every local unstable leaf $W^u$ intersects the local stable leaf $W^s$ of $p$
in a unique point $(0,y)$, so we parametrise these local unstable leaves as
$W^u(y)$. Also the conditional measure $\mu^u_{W^u(y)}$ is absolutely continuous
w.r.t.\ Lebesgue, so we can write $d\mu^u_{W^u(y)} = h(x,y) dm^u_{W^u(y)}$,
and in fact $h(x,y)$ is $C^{\kappa+1}$ times differentiable in $x$
(see \cite[Proposition 3.1]{HY95}).
We decompose $d\mu = d\mu^u_{W^u(y)} d\mu^s$ and obtain
 \begin{eqnarray*}
  \mu(\varphi > n) &=& \int_{\eta_0}^{\eta_1}  \int_{W^u(y)} 1_{\{\varphi > n\}} 
  d\mu^u_{W^u(y)} \ d\mu^s(y)\\
  &=& \int_{\eta_0}^{\eta_1}  \int_{W^u(y)} 1_{\{\varphi > n\}}  h_{W^u(y)}(x,y) \ dm^u_{W^u(y)}(x) \ d\mu^s(y).
 \end{eqnarray*}
Use the Taylor expansion to approximate the inner integral.
Define $h_j(y) := \frac{\partial^j}{\partial x^j} h_{W^u(y)} (0,y)$,
so that $h_{W^u(y)}(x,y) = \sum_{j=0}^{\kappa-1} \frac{1}{j!} h_j(y) x^j + O(x^\kappa)$.
Integration over $0 \leq x \leq \xi(y,n) = \xi_0(y) n^{-\beta_2}(1-\xi_1(y)n^{-1} + 
O(\max\{ n^{-2}, n^{-\kappa \beta_2} \}) )$
gives
\begin{align*}
 \int_0^{\xi(y,n)} & h_{W^u(y)}(x,y) \, dm^u_{W^u(y)}(x)  
 =\sum_{j=1}^\kappa \frac{1}{j!} h_{j-1}(y) \xi_0^j(y) n^{-j\beta_2} \\
 & -\sum_{j=1}^\kappa \frac{1}{(j-1)!} h_{j-1}(y) \xi_0^j(y)\xi_1(y) n^{-(j\beta_2+1)}
 + \ O(\max\{ n^{-(2+\beta_2)}, n^{-(\kappa+1) \beta_2} \} )).
\end{align*}
Next we integrate over $y \in (\eta_0, \eta_1)$ and obtain
$$
\mu(\varphi > n) =
\sum_{j=1}^\kappa H_j n^{-j\beta_2} \\
-\sum_{j=1}^\kappa \hat H_j n^{-(j\beta_2+1)} + \ O(\max\{ n^{-(2+\beta_2)}, n^{-(\kappa+1) \beta_2}\})),
$$
for $H_j = \frac{1}{j!}\int_{\eta_0}^{\eta_1} h_{j-1}(y) \xi_0^j(y)\, d\mu^s(y)$
and $\hat H_j = \frac{1}{(j-1)!}\int_{\eta_0}^{\eta_1} h_{j-1}(y) \xi_0^j(y) \xi_1(y)\, d\mu^s(y)$.
This completes the proof.
\end{proofof}

\subsection{The effect of small perturbations}\label{sec:perturb}

To prove that the regular variation established in Proposition~\ref{prop:regvar}
is robust under perturbations of the vector field, we perturb $X$ from \eqref{eq:vf}
to obtain
\begin{equation}\label{eq:perturb}
\tilde X = \begin{pmatrix} \tilde X_1 \\ \tilde X_2 \end{pmatrix}
= \begin{pmatrix}
   x(a_0 x^\kappa + a_2 y^\kappa + O(|(x,y)|^{\kappa+1})) \\
  -y(b_0 x^\kappa + b_2 y^\kappa + O(|(x,y)|^{\kappa+1}))
  \end{pmatrix},
\end{equation}
so that $\tilde X - X = O(|(x,y)|^{\kappa+1})$.
The quantity $\xi(\eta, T)$ then becomes $\tilde \xi(\eta, T)$ and the goal is to show
that $\tilde \xi(\eta, T)$ is still regularly varying.

\begin{prop}\label{prop:perturb}
Consider a $C^{\kappa+1}$ vector field of local form \eqref{eq:perturb} with $a_0, a_2, b_0, b_2 \geq 0$
and $\Delta \neq 0$.
Recall that $\beta_2 = \frac{a_2+b_2}{\kappa b_2}$ and 
$\beta_* = \frac{1}{\kappa}
\min\left\{ 1,\frac{a_2}{b_2}, \frac{b_0}{a_0}\right \}$.  
Then the asymptotics of the perturbed version of $\xi(\eta,T)$ is
$$
\tilde \xi(\eta,\tilde T) = \xi_0(\eta) \tilde T^{-\beta_2} (1 + O(\tilde T^{-\beta_*}, T^{-\frac{1}{\kappa}} \log T)).
$$ 
as $T \to \infty$, and $\xi_0(\eta)$ is as in Proposition~\ref{prop:regvar}.
\end{prop}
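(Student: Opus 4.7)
The plan is to repeat the proof of Proposition~\ref{prop:regvar} for the perturbed vector field $\tilde X$, tracking the additional error terms generated by the higher order contribution $\tilde X - X = O(|z|^{\kappa+1})$. First, I would derive the perturbed ODE in the reduced coordinate $M = y/x$: a direct computation using $\dot M = \tilde X_2/\tilde x - M\,\tilde X_1/\tilde x$ yields
\[
\dot M = -M(c_0 + c_2 M^\kappa)\, \tilde x^\kappa \bigl(1 + O(|\tilde z|)\bigr),
\]
so that the $M$-equation differs from \eqref{eq:M0} only by a multiplicative factor $1 + O(|\tilde z|)$.

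Since $\nabla L \cdot X \equiv 0$ by Lemma~\ref{lem:lf}, the function $L$ is no longer exactly conserved under $\tilde X$, but $\dot L = \nabla L \cdot (\tilde X - X)$. Using the explicit formulas for $\partial_x L$ and $\partial_y L$ derived in the proof of Lemma~\ref{lem:lf} together with $\tilde X - X = (O(\tilde x\,|\tilde z|^{\kappa+1}),\, O(\tilde y\,|\tilde z|^{\kappa+1}))$, one checks that $|\partial_x L|\leq C L/\tilde x$ and $|\partial_y L|\leq C L/\tilde y$, hence $|\dot L|\leq C L\,|\tilde z|^{\kappa+1}$; i.e., the relative drift of $L$ along the perturbed orbit is controlled by $|\tilde z|^{\kappa+1}$. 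Substituting the approximate first-integral relation $L(\tilde x(t),\tilde y(t)) = L(\tilde\xi,\eta)\bigl(1 + O(\int_0^t |\tilde z(s)|^{\kappa+1}\,ds)\bigr)$ into the $M$-equation, separating variables, and integrating produces the perturbed analogue of \eqref{eq:M4}:
\[
\int_{\tilde \omega/\zeta_0}^{\eta/\tilde \xi} \frac{dM}{M^{1-1/\beta_0}(c_0 + c_2 M^\kappa)^{1/(\kappa\beta_0) + 1/(\kappa\beta_2)}}
 = G(\tilde \xi,\eta)\,\tilde T \bigl(1 + \mathcal E(\tilde \xi,\tilde T)\bigr),
\]
where $\mathcal E$ combines the contributions from the multiplicative error in the $M$-equation and the drift of $L$. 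Inverting this relation exactly as in the derivation of \eqref{eq:xi0} then gives $\tilde \xi(\eta,\tilde T) = \xi_0(\eta)\tilde T^{-\beta_2}\bigl(1 + O(\mathcal E)\bigr)$, so the leading constant $\xi_0(\eta)$ is untouched by the perturbation.

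The main obstacle is to bound $\mathcal E$ by $O(\tilde T^{-\beta_*},\tilde T^{-1/\kappa}\log \tilde T)$. The idea is to approximate the perturbed trajectory by the unperturbed one on the level set $\{L = L(\tilde\xi,\eta)\}$ (legitimate because the relative drift of $L$ is small), and then use the asymptotics of Proposition~\ref{prop:regvar} to split the crucial integral $\int_0^{\tilde T}|\tilde z(t)|^{\kappa+1}\,dt$ into three phases: an initial phase near $\{y \approx \eta\}$, where $\tilde x$ grows from $\tilde\xi \asymp \tilde T^{-\beta_2}$; a middle phase where both coordinates are of order $t^{-1/\kappa}$; and a terminal phase near $\{x \approx \zeta_0\}$, where $\tilde y$ decays toward $\tilde \omega \asymp \tilde T^{-\beta_0}$. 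Careful bookkeeping shows that the relative contributions of the three phases are of order $\tilde T^{-a_2/(\kappa b_2)}$, $\tilde T^{-1/\kappa}$ and $\tilde T^{-b_0/(\kappa a_0)}$ respectively, so that $\mathcal E = O(\tilde T^{-\beta_*})$; the borderline situation in which the middle-phase integrand is exactly of order $1/t$ (i.e., when $\beta_* = 1/\kappa$) generates the extra logarithmic factor $\log \tilde T$ appearing in the statement.
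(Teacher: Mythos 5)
Your overall strategy (pass to the $M$-equation, treat $L$ as an approximate first integral of $\tilde X$, and control the error by the drift of $L$ along perturbed orbits) is close in spirit to the paper's, but it collapses at the one step that carries all of the difficulty. The bound $|\dot L|\leq CL|\tilde z|^{\kappa+1}$ is correct, but the integral $\int_0^{\tilde T}|\tilde z(t)|^{\kappa+1}\,dt$ is \emph{not} $o(1)$: it converges to a strictly positive constant. Indeed, in the initial phase $|\tilde z|\asymp\tilde y$ and $dt=d\tilde y/\dot{\tilde y}\asymp-d\tilde y/(b_2\tilde y^{\kappa+1})$, so that phase alone contributes $\asymp\int_{\delta}^{\eta}b_2^{-1}\,dy\to\eta/b_2$; symmetrically the terminal phase contributes $\asymp\zeta_0/a_0$, and only the middle phase is $O(T^{-1/\kappa}\log T)$. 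Your claimed outer-phase contributions $\tilde T^{-a_2/(\kappa b_2)}$ and $\tilde T^{-b_0/(\kappa a_0)}$ are therefore wrong, and the relation $L(\tilde x(t),\tilde y(t))=L(\tilde\xi,\eta)(1+o(1))$ fails: by the time the perturbed orbit reaches the bottleneck near the origin --- where essentially all of $\tilde T$ accrues --- its $L$-value has drifted by a factor $1+\Theta(\eta)$. Since the bottleneck traversal time scales like $G^{-1}\propto L^{-\kappa/(u+v+\kappa)}$, a constant relative error in $L$ feeds a constant relative error into $\tilde T$ and hence into the leading coefficient of $\tilde\xi$, which is exactly what the proposition must exclude. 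This cannot be fixed by sharper bookkeeping of the same integral: for an explicit perturbation such as $\dot x=x(a_0x^\kappa+a_2y^\kappa+\hat a_0y^{\kappa+1})$, $\dot y=-y(b_0x^\kappa+b_2y^\kappa)$ one checks that the drift of $\log L$ over the initial phase really is of order $\eta$, not merely bounded by it.

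The paper proceeds differently at precisely this point. It builds an \emph{exact} first integral $\tilde L$ of $\tilde X$ by transporting values from the diagonal along $\tilde X$-orbits, and quantifies $\tilde L/L$ by comparing where the $X$-orbit and the $\tilde X$-orbit through the \emph{same} point cross the diagonal ($\delta$ versus $\tilde\delta$). That comparison is performed on the orbits viewed as graphs $x(y)$ versus $\tilde x(y)$, integrating the difference of \eqref{eq:vfq1} and \eqref{eq:vfq2} in the $y$-variable: the relevant quantity is the transverse displacement of the orbit at the level $y=\delta$, which is much finer information than the accumulated change of $L$ along the way. The paper also fixes the initial point and compares exit times via $\xi(\eta,T)=\tilde\xi(\eta,\tilde T)$, splitting the trajectory at the scale $\rho(\delta)=\delta\log(1/\delta)$, so that the $O(1)$-long outer phases only cost an error that is small relative to the total time $T\asymp\delta^{-\kappa}$. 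To salvage your argument you would have to replace the Lie-derivative estimate by such an orbit (graph) comparison; and I warn you that even there the delicate issue is the Gronwall amplification of the displacement $\tilde x(y)-x(y)$ over the initial phase, i.e.\ the homogeneous term $\partial_x\bigl(x'(y)\bigr)\cdot(\tilde x-x)$ in the variational equation --- the very term that your drift bound, and also the naive integration of $\tilde x'-x'=x'(q_0x+q_2y)$, fails to capture.
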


\begin{proof}
As before, let $\xi = \xi( \eta,T)$ be such that for the unperturbed flow, 
$\Phi^T (\xi, \eta) = (\zeta_0, \omega(\eta,T))$.
Proposition~\ref{prop:regvar} gives the asymptotics of $\xi(\eta,T)$ as $T \to \infty$.
At the same time, under the perturbed flow associated to \eqref{eq:perturb},
$\Phi^{\tilde T} (\xi, \eta) = (\zeta_0, \tilde \omega(\eta,\tilde T))$ for some $\tilde T$.
Therefore we can write $\xi(\eta,T)  = \tilde \xi(\eta, \tilde T)$,
and once we estimated $\tilde T$ as function of $T$, we can express $\tilde \xi(\eta, \tilde T)$
explicitly as function of $\tilde T$. We follow the argument of the proof of Proposition~\ref{prop:regvar}, 
keeping track of the effect of the perturbations. \\[3mm]
{\bf The perturbed first integral:} To start, we construct a first integral 
 $\tilde L$ on $Q = [0,\zeta_0] \times [0,\eta_0]$ by defining 
 $$
 \tilde L(\tilde \Phi^t(\delta, \delta)) = L(\delta,\delta) = 
 \begin{cases}
  \delta^{u+v+\kappa}(\frac{a_0}{v} + \frac{b_2}{u})  & \text{ if } \Delta > 0,\\
  \delta^{-(u+v+\kappa)}(\frac{a_0}{v} + \frac{b_2}{u})^{-1}  & \text{ if } \Delta < 0,
 \end{cases}
 $$ 
 for $0 < \delta \leq \min\{\zeta_0,  \eta_0\}$ and $t \in \R$.
 (We continue the argument for the case $\Delta > 0$; the other case goes analogously.)
 
 By construction, $\tilde L$ is constant on integral curves
 of $\dot z = \tilde X(z)$.
 Because $\tilde X$ is $C^{\kappa+1}$, the integral curves are $C^{\kappa+1}$ curves,
 and form a $C^{\kappa+1}$ foliation of $P_0$, see e.g.\ \cite[Theorem 2.10]{Teschl}.
 Note that the coordinate axes consist
 of the stationary point $(0,0)$ and its stable and unstable manifold;
 we put $\tilde L(x,0) = \tilde L(0,y) = 0$.
 Then $\tilde L$ is continuous on $Q$ and $C^{\kappa+1}$ on the interior of $Q$.
 
 Now we compare $\tilde L$ with $L$ on a small neighbourhood $U$ of 
 $\Phi^{-1}(Q) \cup Q \cup \Phi^1(Q)$.
 Take $y_0 = \eta_0$ and $x_0 = x_0(\delta)$ such that the integral curve
 of $\dot z = X(z)$ through $z_0 := (x_0, y_0)$  intersects the diagonal at $(\delta, \delta)$.
 Then the integral curve
 of $\dot z = \tilde X(z)$ through $z_0$ intersects the diagonal at $(\tilde \delta, \tilde \delta)$
 for some $\tilde \delta = \tilde \delta(\delta)$, see Figure~\ref{fig:deltas}.
%
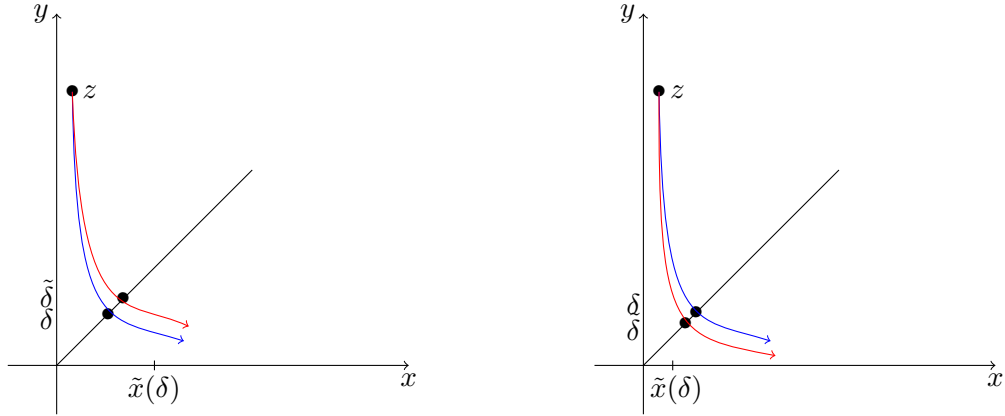
\begin{figure}[ht]
\begin{center}
\begin{tikzpicture}[scale=1.3]
\node at (3.6,-0.15) {\small $x$};\node at (-0.16,3.6) {\small $y$};
\draw[->] (-0.5,0)--(3.6,0);
\draw[->] (0,-0.5)--(0,3.6);
\draw[-] (0,0)--(2,2);
\node at (0.16, 2.8) {\small $\bullet$}; \node at (0.34,2.8) {\small $z$};
\node at (0.525, 0.525) {\small $\bullet$}; \node at (-0.1,0.48) {\small $\delta$};
\node at (0.68, 0.68) {\small $\bullet$}; \node at (-0.1,0.74) {\small $\tilde\delta$};
\draw[-] (1,-0.05)--(1,0.05); \node at (1,-0.25) {\small $\tilde x(\delta)$};
\draw[->, draw=blue] (0.16,2.8) .. controls (0.2,0.3) and (0.5,0.5) .. (1.3,0.25);
\draw[->, draw=red] (0.16,2.8) .. controls (0.25,0.3) and (0.7,0.7) .. (1.35,0.4);
\node at (9.6,-0.15) {\small $x$};\node at (5.84,3.6) {\small $y$};
\draw[->] (5.5,0)--(9.6,0);
\draw[->] (6,-0.5)--(6,3.6);
\draw[-] (6,0)--(8,2);
\node at (6.16, 2.8) {\small $\bullet$}; \node at (6.35,2.8) {\small $z$};
\node at (6.54, 0.54) {\small $\bullet$}; \node at (5.9,0.63) {\small $\delta$};
\node at (6.43, 0.43) {\small $\bullet$}; \node at (5.9,0.4) {\small $\tilde\delta$};
\draw[-] (6.3,-0.05)--(6.3,0.05); \node at (6.34,-0.25) {\small $\tilde x(\delta)$};
%
\draw[->, draw=blue] (6.16,2.8) .. controls (6.2,0.3) and (6.5,0.5) .. (7.3,0.25);
\draw[->, draw=red] (6.16,2.8) .. controls (6.15,0.3) and (6.3,0.3) .. (7.35,0.1);
\end{tikzpicture}
\caption{Solutions of \eqref{eq:vfq1} and \eqref{eq:vfq2}, starting from the same point
$z = (x, y)$. The left and right panel refer to the cases $\tilde\delta > \delta$
and $\tilde \delta < \delta$ respectively.}
\label{fig:deltas}
\end{center}
\end{figure} 

Therefore
\begin{equation}\label{eq:LL}
\tilde L(z) \ = \ \tilde L(\tilde \delta, \tilde\delta)\ =\  L(\delta,\delta)
\left(\frac{\tilde\delta}{\delta}\right)^{u+v+\kappa}
\ =  \ L(z) \left( \frac{\tilde\delta}{\delta}\right)^{u+v+\kappa} \!\!\!\!\!\!\! \!\!\!\!\!\!\!\!\!\!.
\end{equation}
\noindent
{\bf Estimating $\tilde \delta/\delta$:} Parametrise the integral curve of $X$ through $z_0$ as
$(x(y), y)$ for $\min\{ \delta, \tilde \delta\} \leq y \leq y_0$.
(So $x \leq y$; the case $y \leq x$ can be dealt with by switching the roles of $x$ and $y$.)
Then by \eqref{eq:vf}:
\begin{equation}\label{eq:vfq1}
 x'(y) = -\frac{x(a_0 x^{\kappa} + a_2 y^{\kappa})}{y(b_0 x^{\kappa} + b_2 y^{\kappa})}.
\end{equation}
For the perturbed vector field \eqref{eq:perturb} we parametrise the integral curve of through $z_0$ as
$(\tilde x(y), y)$ and we have the analogue of \eqref{eq:vfq1}:
\begin{equation}\label{eq:vfq2}
 \tilde x'(y) = -\frac{\tilde x(a_0 \tilde x^{\kappa} + a_2 y^{\kappa} 
 + \sum_{j=0}^{\kappa+1} \hat a_j \tilde x^j y^{\kappa+1-j} + o(|(\tilde x,y)|^{\kappa+1}))}
 {y(b_0 \tilde x^{\kappa} + b_2 y^{\kappa} + \sum_{j=0}^{\kappa+1} \hat b_j \tilde x^j y^{\kappa+1-j}
 + o(|(\tilde x,y)|^{\kappa+1}))}.
\end{equation}
Sincee $x \leq y$, the $O$-terms can be written as $O(y^{\kappa+1})$.
Combining \eqref{eq:vfq1} and \eqref{eq:vfq2} we obtain
$$
\tilde x'(y)  =  -\frac{\tilde x(a_0 \tilde x^{\kappa} + a_2 y^{\kappa})}{y(b_0\tilde x^{\kappa} + b_2 y^{\kappa})}
(1+ q_0 \tilde x + q_2 y +  o(|(\tilde x,y)|))
= x'(y)(1+q_0 x + q_2 y +  o(|(\tilde x,y)|)) ).
$$
We will neglect the term $o(|(\tilde x,y)|)$ because they can be absorbed
in the big-$O$ terms at the end of the estimate.
Integration over $[\delta,y_0]$ gives
$$
\tilde x(y_0) - \tilde x(\delta) = x(y_0) - x(\delta)  + q_0 \int_\delta^{y_0} x'(y) x(y)\ dy
+ q_2 \int_\delta^{y_0} x'(y) y\ dy.
$$
Since $\tilde x(y_0) = x(y_0) = x_0$ and $x(\delta) = \delta$, this simplifies to
\begin{eqnarray}\label{eq:dtilded}
\tilde x(\delta) - \delta &=&  -  \frac{q_0}{2} \int_\delta^{y_0} (x^2(y))' \ dy -
q_2 \int_\delta^{y_0} (x(y) y)' \ dy + q_2 \int_\delta^{y_0} x(y) \ dy  \nonumber \\
&=& \frac{q_0}{2}(\delta^2 - x_0^2) + q_2 \left( \delta^2 - x_0 y_0 + \int_\delta^{y_0} x(y) \ dy \right).
\end{eqnarray}
We solve for $x$ from 
$x^u y^v (\frac{a_0}{v} x^\kappa + \frac{b_2}{u}y^\kappa) = L(x, y) = L(\delta, \delta) 
= \delta^{u+v+\kappa}(\frac{a_0}{v}+ \frac{b_2}{u})$:
\begin{eqnarray}\label{eq:x(y)}
 x = x(y) &=& \delta^{\frac{u+v+\kappa}{u}} y^{-\frac{v+\kappa}{u}} (1+\frac{u a_0}{v b_2})^{\frac1u}
 (1+ \frac{c_0}{c_2} \frac{x^{\kappa}}{y^{\kappa}})^{-\frac1u} \nonumber \\
  &=& \underbrace{(c_2+c_0)^{\frac1u} 
  (c_2 + c_0 \frac{x^{\kappa}}{y^{\kappa}})^{-\frac1u}}_{U(y)}   \delta^{1+\frac{a_2}{b_2}} y^{-\frac{a_2}{b_2}}.
\end{eqnarray}
In particular,
$$
x_0 = x(y_0) = \delta^{1+\frac{a_2}{b_2}}(1+\frac{c_0}{c_2})^{\frac1u} y_0^{-\frac{a_2}{b_2}} 
  (1+O(\delta^{\kappa(1+\frac{a_2}{b_2})})). 
$$
Combine the first two factors of \eqref{eq:x(y)} to
$U(y) :=  (c_2+c_0)^{\frac{1}{u}} (c_2 + c_0 \frac{x^{\kappa}}{y^{\kappa}})^{-\frac1u} 
\in [1, (1+\frac{c_0}{c_2})^{\frac{1}{u}}]$.
Note that $\lim_{y \to \delta} U(y) = 1$, and $U(y)$ is differentiable.
Using \eqref{eq:vfq1} and \eqref{eq:x(y)} we compute the derivative
$$
U'(y) = \frac{\Delta c_0}{b_2}\ \frac{U(y)}{b_2+b_0 (\frac{x}{y})^\kappa}\ \frac1y\ (\frac{x}{y})^\kappa
= \frac{\Delta c_0}{b_2}\ \delta^{\kappa(1+\frac{a_2}{b_2})}\  \frac{U(y)^{\kappa+1}}{b_2+b_0 (\frac{x}{y})^\kappa} 
\ y^{-\kappa(1+\frac{a_2}{b_2})-1}.
$$
Next we integrate by parts (assuming first that $\frac{a_2}{b_2} \neq 1$):
\begin{eqnarray*}
\int_\delta^{y_0} x(y) \ dy &=& \delta^{1+\frac{a_2}{b_2}} \int_\delta^{y_0} U(y) y^{-\frac{a_2}{b_2}} dy
= \frac{b_2}{b_2-a_2}
\left( U(y_0) \delta^{1+\frac{a_2}{b_2}} y_0^{1-\frac{a_2}{b_2}} - \delta^2 \right) \\
&&  \qquad \underbrace{- \frac{\Delta c_0}{b_2-a_2}\ \delta^{(\kappa+1)(1+\frac{a_2}{b_2})}\  
\int_\delta^{y_0} \frac{U(y)^{\kappa+1}}{b_2+b_0 (\frac{x}{y})^\kappa} \  
y^{1-(\kappa+1)(1+\frac{a_2}{b_2})} dy}_I.
\end{eqnarray*}
Since $\frac{1}{b_2+b_0} \leq  \frac{U(y)^{\kappa+1}}{b_2+b_0 (\frac{x}{y})^\kappa}
\leq \frac{1}{b_2}(1+\frac{c_0}{c_2})^{\frac{\kappa+1}{u}}$
and $\frac{U(y)^{\kappa+1}}{b_2+b_0 (\frac{x}{y})^\kappa} \to \frac{1}{b_2+b_0}$ as $y \to \delta$,
there are constants $\hat C_1, \hat C_2 \in \R$ 
such that the final term in the above expression is
$$
I = \hat C_1 \delta^2 + \hat C_2 \delta^{(\kappa+1)(1+\frac{a_2}{b_2})}
y_0^{2-(\kappa+1)(1+\frac{a_2}{b_2})} + O(\delta^3).
$$
For the case $\frac{a_2}{b_2} = 1$, a similar computation gives
$$
\int_\delta^{y_0} x(y) \ dy = \hat C_3 \delta^2 \log \delta + \hat C_4 \delta^2 \log y_0
+ \hat C_5 \delta^{2(\kappa+1)} y_0^{-2\kappa} \log y_0
+  \hat C_6 \delta^{2(\kappa+1)} y_0^{-2\kappa} + O(\delta^3 \log \delta),
$$
for some generically nonzero $\hat C_3, \hat C_4, \hat C_5, \hat C_6 \in \R$.

By \eqref{eq:vfq2}, the derivative $\tilde x'(\delta) = \frac{a_0+a_2}{b_0+b_2} + O(\delta)$.
Since $\tilde \delta$ lies between $\delta$ and $\tilde x(\delta)$ (see Figure~\ref{fig:deltas}), 
we have 
\begin{equation}\label{eq:xdelta}
|\tilde x(\delta) - \delta| = |\tilde x(\delta) - \tilde \delta| + |\tilde \delta - \delta|
= \left(1+ \frac{a_0+a_2}{b_0+b_2} + O(\delta)\right) |\tilde \delta - \delta|
= \frac{c_0+c_2 + O(\delta) }{b_0+b_2}|\tilde \delta - \delta|.
\end{equation}
Later in the proof we need the quantity
$$
\psi(\delta) := \left(\frac{\tilde\delta}{\delta}\right)^{u+v+\kappa} - 1 = 
(u+v+\kappa) \frac{\tilde \delta - \delta}{\delta} + O\left(\frac{|\tilde \delta - \delta|^2}{\delta^2}\right).
$$
Writing $|\tilde \delta-\delta|$ in terms of $|\tilde x(\delta)-\delta|$ using
\eqref{eq:xdelta}, and
combining with the above estimates for $|\tilde x(\delta)-\delta|$, we find 
\begin{eqnarray}\label{eq:psidelta}
\psi(\delta) &=& C_1 \delta + C_2 \delta^{\frac{a_2}{b_2}} y_0^{1-\frac{a_2}{b_2}} 
 + C_3 \delta^{1+\frac{a_2}{b_2}} y_0^{-\frac{a_2}{b_2}} 
 + C_4 \delta^{1+\frac{2a_2}{b_2}} y_0^{-\frac{2a_2}{b_2}} \nonumber  \\
&& +\ C_{\log} \delta \log \delta + O(\delta^2, \delta^{\frac{2a_2}{b_2}} )
\end{eqnarray}
for (generically nonzero) constants $C_1, C_2, C_3, C_4 \in \R$
and $ C_{\log}$ is only nonzero if $\frac{a_2}{b_2} = 1$.

For the region $\{ x \geq y \}$ (containing the point $(x_1, y_1) := (\zeta_0, \tilde \omega(\eta,\tilde T))$)
we reverse the roles  $a_2,b_2,x_0, y_0 \leftrightarrow b_0, a_0, y_1, x_1$.
This gives
\begin{eqnarray*}
\psi(\delta) &=& \hat C_1 \delta + \hat C_2 \delta^{\frac{b_0}{a_0}} x_1^{1-\frac{b_0}{a_0}} 
 + \hat C_3 \delta^{1+\frac{b_0}{a_0}} x_1^{-\frac{b_0}{a_0}} 
 + \hat C_4 \delta^{1+\frac{2b_0}{a_0}} x_1^{-\frac{2b_0}{a_0}} \nonumber  \\
&& +\ \hat C_{\log} \delta \log \delta + O(\delta^2, \delta^{\frac{2b_0}{a_0}} ),
\end{eqnarray*}
for (generically nonzero) constants $\hat C_1, \hat C_2, \hat C_3, \hat C_4 \in \R$
and $\hat C_{\log}$ is only nonzero if $\frac{b_0}{a_0} = 1$.
Combining with \eqref{eq:psidelta} gives
\begin{equation}\label{eq:psidelta2}
 \psi(\delta) =
\begin{cases}
O(\delta \log 1/\delta) & \text{ if }  \min\{\frac{a_2}{b_2}, \frac{b_0}{a_0}\} = 1,\\
 O(\delta^{a_*}) & \text{ otherwise, with } a_* = \min\{ 1, \frac{a_2}{b_2}, \frac{b_0}{a_0}\}.
\end{cases}
\end{equation}

\noindent
{\bf Estimate of $\tilde T$:} Now let $z_0 = (x_0, y_0) = (\xi(\eta, T), \eta) = (\tilde\xi(\eta, \tilde T), \eta)$ be the point such that
$\Phi^T(z_0) = (\zeta_0, \omega(\eta,T))$ under the {\bf unperturbed}
flow and $\tilde \Phi^{\tilde T}(z_0) = (\zeta_0, \tilde \omega(\eta,\tilde T))$ under the {\bf perturbed}
flow.
We estimate $\tilde T$ in terms of $T$.

Combining the estimate for $\xi(\eta, T)$ from Proposition~\ref{prop:regvar}
with $L(\delta,\delta) = L(\xi(\eta,T), \eta)$, we can find the relation
between $\delta$ and $T$:
\begin{equation}\label{eq:deltaT}
 \delta = \delta_0 T^{-\frac{1}{\kappa}} (1 +\frac{\xi_1}{\kappa\beta_2} T^{-1}
 + O(T^{-2}, T^{-\kappa \beta_2})),
\end{equation}
for $\delta_0 = \xi_0^{\frac{1}{\kappa \beta_2}} \eta^{1-\frac{1}{\kappa \beta_2}}
(\frac{c_2}{c_0+c_2})^{\frac{1}{u+v+\kappa}}$.

For $M = y/x$, computations analogous to \eqref{eq:M0} show that there is
$\Psi = \Psi(x,M) = O(1+M^{\kappa+1})$ such that
$$
\dot M = - M (c_0 + c_2 M^\kappa + x \Psi) x^\kappa.
$$
For every $(x,y) = (x,xM)$ on the $\tilde \Phi$-trajectory of $z_0$ (i.e., level set of $\tilde L$),
we have
$$
\xi^u \eta^v (\frac{a_0}{v} \xi^\kappa + \frac{b_2}{u} \eta^\kappa)(1+\psi(\xi,\eta)) = 
x^{u+v+\kappa} M^\kappa (\frac{a_0}{v} + \frac{b_2}{u} M^\kappa) (1+\psi(x,xM)).
$$
This gives the analogue of \eqref{eq:M3}:
\begin{equation}\label{eq:M3a}
 \dot M = - C_\xi M^{1-\frac{1}{\beta_0}}
 \left(c_0+c_2M^\kappa + x\Psi(x,M) \right)^{\frac{1}{\kappa \beta_0} + \frac{1}{\kappa \beta_2}}
\left( \frac{ 1+\psi(x,y) }{1+\psi(\xi,\eta) } \right)^{1-\frac{1}{\kappa \beta_0} - \frac{1}{\kappa \beta_2}},
\end{equation}
where $C_\xi$ is as in \eqref{eq:Exi}. 
To estimate $\tilde T$, we take some increasing function 
$\delta \leq \rho(\delta) \leq \delta^{1/2}$ such that $\delta = o(\rho(\delta))$ and divide the 
trajectory $\tilde \Phi^t(z_0) = (\tilde x(t), \tilde y(t))$  of $z_0$ into three parts:
\begin{equation}\label{eq:T1T2}
\tilde T_1 = \min\{ t > 0 : \tilde y(t) = \rho(\delta) \},
\qquad
\tilde T_2 = \max\{ t < \tilde T :  \tilde x(t) = \rho(\delta) \},
\end{equation}
and let $T_1, T_2$ be the analogous quantities for the unperturbed trajectory.
We compute
$$
T_1 = \int_{y_0}^{\rho(\delta)} \frac{dy}{\dot y} = 
\int_{y_0}^{\rho(\delta)} \frac{dy}{-y(a_2 x(y)^{\kappa} + b_2 y^{\kappa})} 
= O(\rho(\delta)^{-\kappa}).
$$
Similarly, using $\tilde x(y)/x(y) = 1+O(\psi(\delta))$ as in \eqref{eq:x(y)},
\begin{eqnarray*}
\tilde T_1 - T_1 &=& 
\int_{y_0}^{\rho(\delta)}\frac{1+O(y)}{-y(a_2 \tilde x(y)^{\kappa} + b_2 y^{\kappa})} 
-  \frac{1}{-y(a_2 x(y)^{\kappa} + b_2 y^{\kappa})} \ dy \\
&=& \int_{y_0}^{\rho(\delta)}\frac{O(y)( 1+O(\psi(\delta)))}{-y(a_2 x(y)^{\kappa} + b_2 y^{\kappa})} \ dy = O(\rho(\delta)^{1-\kappa}).
\end{eqnarray*}
This gives
\begin{equation}\label{eq:tildeT1}
\tilde T_1 = T_1(1+ O(T_1^{-\frac1{\kappa}})) \quad \text{ and } \quad
\tilde T - \tilde T_2 = (T-T_2) (1+O(\rho( (T-T_2)^{-\frac1{\kappa}}) ))
\end{equation}
by a similar computation for $T-T_2 = \int_{\rho(\delta)}^{\zeta_0}\frac{dx}{\dot x}$, etc.

Finally, for $\tilde T_1 < t < \tilde T_2$, we have $\psi(x, y) = O(\delta^{\alpha_*}, \delta \log(1/\delta))$ by \eqref{eq:psidelta2},
and $x\Psi(x,M) = O(x + y M^\kappa) = (1+M^\kappa) O(\rho(\delta))$.
Therefore 
\begin{eqnarray*}
\tilde T_2 - \tilde T_1 &=& \int_{M(\tilde T_2)}^{M(\tilde T_1)} 
\frac{ \left(1+\frac{ O(x+yM^\kappa) }{c_0+c_2 M^\kappa} \right)^{\frac{1}{\kappa \beta_0} + \frac{1}{\kappa \beta_2}} }
{ C_\xi M^{1-\frac{1}{\beta_0}} (c_0+c_2 M^\kappa)^{\frac{1}{\kappa \beta_0} + \frac{1}{\kappa \beta_2}} }
\left( \frac{ 1+\psi(x,y) }{1+\psi(\xi,\eta) } \right)^{\frac{1}{\kappa \beta_0} + \frac{1}{\kappa \beta_2}-1}\ dM\\
&=& \int_{M(\tilde T_2)}^{M(\tilde T_1)} 
\frac{ 1+ O(\rho(\delta)) }
{ C_\xi M^{1-\frac{1}{\beta_0}} (c_0+c_2 M^\kappa)^{\frac{1}{\kappa \beta_0} + \frac{1}{\kappa \beta_2}} }
(1+O(\delta^{\alpha_*}, \delta \log(1/\delta) ) \ dM\\
&=& (T_2-T_1)(1+O(\rho(\delta), \delta^{\alpha_*}, \delta \log(1/\delta))  ).
\end{eqnarray*}
Choosing $\rho(\delta) = \delta \log(1/\delta)$,
and using \eqref{eq:deltaT} gives
$$
\tilde T_2 - \tilde T_1 = 
(T_2-T_1)(1+O(T^{-\frac{\alpha_*}{\kappa}} , T^{-\frac{1}{\kappa}} \log T) ).
$$
Combining this with \eqref{eq:tildeT1} 
gives
$\tilde T = T(1+O(T^{-\beta_*}, T^{-\frac{1}{\kappa}}  \log T))$ for 
$\beta_* =  \frac{1}{\kappa} \min\{1, \frac{a_2}{b_2}, \frac{b_0}{a_0} \}$.
The estimate of Proposition~\ref{prop:regvar} now gives 
$\tilde \xi(\eta, \tilde T) = 
\xi_0(\eta) \tilde T^{-\beta_2}(1+O(\tilde T^{-\beta_*}, \tilde T^{-\frac{1}{\kappa}} \log \tilde T))$ 
as claimed.
\end{proof}

\subsection{Regular variation of $\mu(\varphi > n)$: proof of Theorem~\ref{thm:regvarmu} }
\label{sec:proof}

In the next lemma, we make the step from $C^\infty$ diffeomorphism of the previous
section to $C^{\kappa+2}$ diffeomorphisms. 
The need for this approximation argument is that we do not know a priori
if a $C^{\kappa+2}$ diffeomorphism of form \eqref{eq:diffeo}
is indeed the time-$1$ map of $C^{\kappa+1}$ vector field of the form \eqref{eq:vfq2}.
It may well be so in specific cases, see \cite{DRR}.

\begin{lemma}\label{lem:xi}
 Let $f$ be $C^{\kappa+2}$ almost Anosov diffeomorphism of local form \eqref{eq:diffeo},
 with $a_0, a_2, b_0, b_2 > 0$ and $\Delta \neq 0$.
 Recall that $\beta_* = \frac{1}{\kappa}\min\left\{1,\frac{a_2}{b_2}, \frac{b_0}{a_0}\right \}$. 
  Then
$$
\tilde \xi(\eta,\tilde T) = \xi_0(\eta) \tilde T^{-\beta_2} 
(1 + O(\tilde T^{-\beta_*}, \tilde T^{-\frac{1}{\kappa}}\log T)).
$$ 
as $\N \owns T \to \infty$, and $\xi_0(\eta)$ is as in Proposition~\ref{prop:regvar}.
\end{lemma}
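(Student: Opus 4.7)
The plan is to bridge the $C^{\kappa+1}$ vector field setting of Proposition~\ref{prop:perturb} with the $C^{\kappa+2}$ diffeomorphism setting of this lemma by means of a smooth approximation. First I would choose a $C^\infty$ diffeomorphism $g:\bT^2\to\bT^2$ whose $(\kappa+2)$-jet at $(0,0)$ matches that of $f$, so that $|f(z)-g(z)|=O(|z|^{\kappa+3})$ on a fixed neighbourhood $U$ of the fixed point, with analogous bounds on derivatives up to order $\kappa+1$; such a $g$ is built via a Borel-type construction and a smooth cutoff. Applying Lemma~\ref{lem:straight} with $r=\infty$ keeps the local stable and unstable manifolds of $(0,0)$ for $g$ aligned with the coordinate axes. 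By \cite[Theorem B, consequence 1(ii)]{DRR}, write $g=\hat\Phi^1$ as the time-$1$ map of a $C^\infty$ vector field $\hat X$ of the form \eqref{eq:vfhot}; comparing jets shows that the leading coefficients of $\hat X$ are precisely the $a_0,a_2,b_0,b_2$ attached to $f$. Proposition~\ref{prop:perturb} applied to $\hat X$ then yields
\begin{equation*}
\hat\xi(\eta,\tilde T)=\xi_0(\eta)\tilde T^{-\beta_2}\bigl(1+O(\tilde T^{-\beta_*},\tilde T^{-1/\kappa}\log \tilde T)\bigr)
\end{equation*}
for the exit curves $\hat\xi$ of $g$, with $\xi_0(\eta)$ independent of the approximation.

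The remaining work is to compare $\tilde\xi(\eta,n)$ (for $f$) with $\hat\xi(\eta,n)$ (for $g$) at integer times. I would fix $z_0=(\hat\xi(\eta,n),\eta)$ and run the telescoping identity
\begin{equation*}
f^n(z_0)-g^n(z_0)=\sum_{i=0}^{n-1}\bigl[g^{n-1-i}\!\circ f(z_i)-g^{n-1-i}\!\circ g(z_i)\bigr],\qquad z_i:=g^i(z_0),
\end{equation*}
estimating each term via the mean value theorem. By Proposition~\ref{prop:perturb} applied to $\hat X$, the orbit satisfies $|z_i|\asymp\max(i+1,n-i)^{-1/\kappa}$, so $|f(z_i)-g(z_i)|=O(|z_i|^{\kappa+3})$. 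Combined with polynomial bounds on $\|Dg^{k}\|$ along the orbit (coming from the almost-Anosov structure of $g$, in which $Dg$ is close to the identity and the stable/unstable cone fields are uniformly transverse), summing the Duhamel-type bounds yields $|f^n(z_0)-g^n(z_0)|$ of strictly smaller order than $n^{-\beta_2-\beta_*}$. Inverting the relation $\tilde T\mapsto\hat\xi(\eta,\tilde T)$ then translates this into $|\tilde\xi(\eta,n)-\hat\xi(\eta,n)|=o(n^{-\beta_2-\beta_*})$, which is absorbed into the error already present, yielding the claimed asymptotic for $\tilde\xi$.

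The main obstacle is the cocycle estimate in the second paragraph: near the neutral fixed point $Dg$ is $C^0$-close to the identity, so perturbations propagate forward with only polynomial damping along the stable direction and polynomial amplification along the unstable one, rather than exponentially, and naive summation of $\|Dg^{n-1-i}\|\cdot|z_i|^{\kappa+3}$ risks dominating the target error. I would handle this by decomposing the perturbation $f(z_i)-g(z_i)$ into components along and transverse to the level sets of the first integral $L$ of Lemma~\ref{lem:lf}: the component along the $L$-level set only shifts the transit time $\tilde T\mapsto \tilde T+O(\cdot)$ (which is harmless once one uses the correct relation between $\tilde T$ and $T$, as in the proof of Proposition~\ref{prop:perturb}), while the transverse component is controlled by the H\"older regularity of $L$ near the axes and produces only a subdominant shift in the exit coordinate. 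This separation of the perturbation into ``time'' and ``transverse'' pieces is what forces the accumulated error into the allowed error bands $O(\tilde T^{-\beta_*},\tilde T^{-1/\kappa}\log \tilde T)$.
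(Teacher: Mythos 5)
Your route is genuinely different from the paper's, and it has a genuine gap at its centre. The paper does not compare $f$-orbits with $g$-orbits over $n$ iterates at all: it takes a \emph{sequence} $f_j\to f$ in the $C^{\kappa+2}$ topology, writes each $C^\infty$ map $f_j$ as a time-$1$ map via \cite{DRR}, observes that the error constants in Proposition~\ref{prop:perturb} depend only on bounds for the first $\kappa+1$ derivatives of the vector field and are therefore uniform in $j$, and then, for each fixed $T$, chooses $j=j(T)$ so large that $\xi_{0,j}$ and $\tilde\xi_j(\cdot,T)$ are within $T^{-1}$ of their limits; the only orbit comparison needed is finite-time continuity of $f_j^T$ in $j$ for fixed $T$. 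You instead fix a single smooth model $g$ and must then control the discrepancy between $f$- and $g$-orbits over the whole passage of length $n$ past the neutral fixed point. That is exactly the step your proposal does not deliver: the telescoping bound of your second paragraph fails, since $\|Dg^{n-1-i}\|$ along the passage grows polynomially in $n$ (the unstable expansion over one passage is of order $n^{1+\beta_2}$) while $|f(z_i)-g(z_i)|$ near the entry is of size $o(\eta^{\kappa+2})$, independent of $n$; so the asserted conclusion that $|f^n(z_0)-g^n(z_0)|$ is of smaller order than $n^{-\beta_2-\beta_*}$ is unsupported, and you concede as much in your third paragraph. The proposed repair --- splitting each per-step discrepancy into a time shift along the level sets of $L$ and a transverse component --- is precisely the hard technical core: it amounts to redoing, in discrete time, the entire $\psi(\delta)$ estimate in the proof of Proposition~\ref{prop:perturb} (where the transverse drift must be tracked down to the closest approach $|z|\sim n^{-1/\kappa}$ through the unstable amplification $(\eta/\delta)^{a_2/b_2}$, and where the dangerous contribution comes from the entry region, whose discrepancy does not shrink with $n$ for a fixed $g$). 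None of this is carried out, so the proof is incomplete at its decisive step.

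Two further slips. First, since $f$ is only $C^{\kappa+2}$, matching the $(\kappa+2)$-jet of $g$ to that of $f$ gives $f-g=o(|z|^{\kappa+2})$ by Taylor's theorem, not $O(|z|^{\kappa+3})$, and the corresponding derivative bounds degrade accordingly; the stronger bound would require $f\in C^{\kappa+3}$. Second, the orbit asymptotics should read $|z_i|\asymp\min(i+1,n-i)^{-1/\kappa}$: with your $\max$ one would get $|z_0|\asymp n^{-1/\kappa}$, whereas the entry point satisfies $|z_0|\asymp\eta$. If you wish to keep the single-model scheme you must supply the discrete analogue of \eqref{eq:psidelta}--\eqref{eq:psidelta2} and explain why the resulting error decays in $n$ rather than merely being small in $\|f-g\|_{C^{\kappa+2}(U)}$; otherwise the economical fix is to adopt the paper's device of letting the approximation improve with $T$.
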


\begin{proof}
 Let $f$ be $C^{\kappa+2}$ almost Anosov diffeomorphism with local form \eqref{eq:diffeo}.
 Then there is a sequence $f_j$ of almost Anosov diffeomorphisms
 which coincide with $f$ outside $U \supset \Phi^{-1}(Q) \cap \Phi^1(Q)$, and
 are $C^\infty$ with local form \eqref{eq:diffeo} and converge to $f$ in $C^{\kappa+2}$-topology inside $U$.
 By Lemma~\ref{lem:straight} we can assume $\partial Q$ consist of local stable and unstable leaves
 of $f_j$ for each $j$.
 By \cite[Theorem B and consequence 1(ii) on page 36)]{DRR},
 there are $C^\infty$ vector fields of local form \eqref{eq:vfhot}, such 
 that the $f_j$ are the time-$1$ maps of their flows.
 By Proposition~\ref{prop:perturb},
 $\tilde\xi_j(\eta, T) = \xi_{0,j}(\eta) T^{-\beta_2} (1 + O(T^{-\beta_*}))$,
 where (as one can verify from the proof) the $O(T^{-\beta_*})$-terms depend only on the first $\kappa+1$ 
 derivatives of the vector field.
 Since $f_j \to f$ in the $C^{\kappa+2}$-topology, these terms are uniform in $j$, 
 say they are $ \leq A \max\{ T^{-\beta_*},T^{-\frac{1}{\kappa}} \log T\}$ for some $A > 0$ 
 independent of $j$ and $T$.
 Also $\xi_{0,j} \to \xi_0$ uniformly in $\eta$.
 
 Take $j_n$ so large that $|\xi_{0,j} - \xi_0|_\infty < T^{-1}$ for all $j \geq j_n$.
 Then the triangle inequality gives
 $$
 |\tilde\xi(\eta,T) - \xi_0(\eta) T^{-\beta_2}| \leq (A+1) \max\{ T^{-\beta_*},T^{-\frac{1}{\kappa}} \log T\}
 $$
 as required.
 \end{proof}

\begin{proofof}{Theorem~\ref{thm:regvarmu}}
Recall the definition of $\eta_0$ and $\eta_1$ from the proof 
of Theorem~\ref{thm:regvarmu_special}.
Lemma~\ref{lem:xi} yields the estimates for $\xi(\eta,n)$ for $\eta_0 \leq \eta \leq \eta_1$.
Every local unstable leaf $W^u$ intersects the stable leaf $W^s$ of $p$
in a unique point $(0,y)$, so we parametrise these local unstable leaves as
$W^u(y)$. Also the conditional measure $\mu^u_{W^u(y)}$ is absolutely continuous
w.r.t.\ Lebesgue, so we can write $d\mu^u_{W^u(y)} = h(x,y) dm^u_{W^u(y)}$,
and in fact $h(x,y)$ is differentiable in $x$.
We can decompose $d\mu = d\mu^u_{W^u(y)} d\mu^s$.
Then we get
 \begin{eqnarray*}
  \mu(\varphi > n) &=& \int_{\eta_0}^{\eta_1}  \int_{W^u(y)} 1_{\{\varphi > n\}} d\mu^u_{W^u(y)} \ d\mu^s \\
  &=& \int_{\eta_0}^{\eta_1}  \int_{W^u(y)} h_{W^u(y)}(x,y) \ dm^u_{W^u(y)}(x) \ d\mu^s \\
  &=& \int_{\eta_0}^{\eta_1}  \int_0^{\xi_0(y)n^{-\beta_2}(1+O(n^{-\beta_*}, n^{-\frac1\kappa}\log n))} 
  h_{W^u(y)}(0,y) + \\
&& \qquad  \frac{\partial}{\partial x} h_{W^u(y)}(0,y) x +O(x^2) \ dm(x) \ d\mu^s \\
  &=& n^{-\beta_2} \left(1+O(\max\{ n^{-\beta_*}, n^{-\beta_2}, n^{-\frac1\kappa}\log n \}) \right) 
  \int_{\eta_0}^{\eta_1} \xi_0(y) h_{W^u(y)}(0,y)\, d\mu^s  .
 \end{eqnarray*}
This proves the result for 
$C_0 = \int_{\eta_0}^{\eta_1} \xi_0(y) h_{W^u(y)}(0,y) \ d\mu^s$.
\end{proofof}

\section{Banach spaces estimates}\label{sec-Bspace}

In this section we verify the hypotheses in Section~\ref{sec:set-up} for the maps described in Section~\ref{sec-LT}.

\paragraph{Convention on the use of constants:} Unless otherwise specified, throughout this section
$C$ will denote a positive constant that might vary from line to line.

\vspace{-1ex}

\subsection{Notation and definitions}\label{subsec-admleaves}

Since $f:{\bT}^2 \to {\bT}^2$ is Markov, $F=f^\varphi:Y \to Y$ is also Markov. Indeed, let $\cP$ be the finite  Markov partition for $f$ 
(into rectangles, including $P_0$).
Then $\cY=\cup_{n\geq 1}\{\varphi=n\} \cap \cP^n$, where $\cP^n$ is the $n$-th refinement of $\cP$, 
is a Markov partition for $F$.
We let $Y_j$ be the elements of $\cY$ indexed such that there is $j_0$ such that
$\{ \varphi = n \} = Y_{j_0+n}$.
Note that these sets are small "rectangles" with 'wavy' boundaries,
namely two pieces of stable and two pieces of unstable curve. 
The stable lengths of the elements $Y_j$ are bounded away from zero, and so will be the admissible leaves
below.
Also, the image partition $F(\cY):=\{Y_j'\}$ consists of "rectangles" with wavy boundaries,
again two pieces of stable and two pieces of unstable curve.

For $n\geq 0$, let  $\cY_n=\{Y_{n,j}\}$ be the Markov partition associated with $(Y, F^n)$.  
Since $F$ is invertible, we have $F^{-n}(\{Y_{n,j}'\})=\{Y_{n,j}\}$. 
The map $F^n$ is smooth in the interior of each element of $\cY_n$. 

Singularities for the map $F$ are solely created via inducing and are placed on the 'wavy' boundaries. 
We let $\cS^{\pm n}$ be the set of such singularities for $F^{\pm n}$.

\paragraph{Admissible leaves, distance between leaves:}

Throughout, $\Sigma$ denotes the  set of {\em admissible leaves}, which consists of 
maximal stable leaves $W_{Y_j}^s$ in  partition elements $Y_j$ of $\cY$.
Such leaves can be conveniently described via charts $\chi_j:[0,L_u(Y_j)] \times [0,1] \to Y_j$, where $Y_j$ 
is an element of the Markov partition $\cY$.
Let $L_u(Y_j)$ be the length of the (largest) unstable leaf in $Y_j$ and  assume that $\chi_j^{-1}$ maps 
the 'wavy' boundaries to the boundary of the rectangle $[0,L_u(Y_j)]\times [0,1]$. 
Therefore we can stipulate that the distortion of $\chi_j$ is bounded, uniformly in $j$.
The other stable and unstable leaves in $Y_j$ map
under $\chi_j^{-1}$ to roughly "vertical and horizontal"\footnote{These are still somewhat 'wavy' curves 
in the square $[0,L_u(Y_j)]\times [0,1]$.}.
More precisely, for any leaf $W_{Y_j}^s$ in a partition element $Y_j$, 
\begin{equation}\label{eq-chart}
\chi_j^{-1}(W_{Y_j}^s)=\{(g_{Y_j,W_{Y_j}^s}(\eta),\eta): \eta\in [0,1]\},
\end{equation}
where $g_{Y_j,W^s_{Y_j}}:[0,1]\to[0,L_u(Y_j)]$ is $C^4$ (since by assumption $f$
is four times differentiable). 
Writing $W^u$ for maximal unstable leaves, also let
\begin{equation}\label{eq:L}
L := \inf_j \inf_{W^u \subset Y_j} |F(W^u)|,
\end{equation}
which is positive because the images of the elements $\{ \phi = n\}$ have uniformly long unstable lengths.

When there is no risk of confusion, we write $W:=W_{Y_j}^s$. 
Also, we note that in the above notation,
for any $(x,y)\in W_{Y_j}^s$, there exists $\eta\in [0,1]$ such that
$\chi_j^{-1}(W_{Y_j}^s)(x,y)=(g_{Y_j,W_{Y_j}^s}(\eta),\eta)$. 
When there is no risk of confusion, we write $g:=g_{Y_j,W_{Y_j}^s}$.

This definition of $\Sigma$ differs from the one in \cite{DemersLiverani08} (being closer to the simplification in~\cite{LT}) and allows for simpler arguments
similar to the ones in~\cite{LT}. This is possible  due to the Markov structure of $F$.

Given the representation~\eqref{eq-chart}, we define the distance between leaves $W,\tilde W\in\Sigma$ 
such that $W \in Y_k$ and $\tilde W \in Y_{\tilde k}$, by
$$
d(W,\tilde W)= 
 \begin{cases}
 \sup_{\eta\in [0,1]}|g(\eta)-\tilde g(\eta)| & \text{ if } k = \tilde k;\\
 \sup_{\eta\in [0,1]}|g(\eta)-L_u(Y_k)-\tilde g(\eta)| + \sum_{j=k+1}^{\tilde k-1} L_u(Y_j) & \text{ if } j_0+2 \leq k < \tilde k;\\
 \infty & \text{ otherwise.}
 \end{cases}
$$
Recall here that $j_0$ is such that $\{ \varphi = n \} = Y_{j_0+n}$ and an empty sum $\sum_{j=k+1}^k$ is $0$ by convention.
\vspace{-1ex}
\paragraph{Uniform contraction/expansion, distortion properties:}
Since $f$ satisfies Definition~\ref{def-AlmAn} and Remark~\ref{rem:add_def}, $F$ is hyperbolic. That is, there exist two 
transversal families of stable and unstable cones $y\to C^s(y), C^u(y)$ such that items i) and ii) hold with $F$ instead of $f$,
for all $y\in Y\setminus\cS^{+1}$.
There exist $\lambda>1$ and $C>0$ such that for all $n\geq 0$, $j \geq 1$
for all $y\in Y_{n,j}$.

\begin{equation}\label{eq-dist}
\|DF^n(y)v\|\geq C \lambda^n\|v\|,\forall v\in C^u(y)\ \mbox{ and }\ 
\|DF^{-n}(y)v\|\geq C \lambda^n\|v\|,\forall v\in C^s(y),
\end{equation}
where $\|\,\|$ is the Euclidean norm on the tangent space $T_y(Y)$.

Let $J_W F^n$ be the Jacobian of $F^n$ along the stable leaf $W$ and let $J_uF^n$ 
be the Jacobian of $F^n$ in the unstable direction.
Note that for any $y\in Y\setminus S^{+n}$, 
\begin{equation}\label{eq:Ctheta}
|DF^n(y)| := |\det(DF^n(y))|=C_\theta(y)J_W F^n(y)J_uF^n(y),
\end{equation}
where $C_\theta(y)$ is a number depending on the angle $\theta$ between the stable 
leaf $W$ and unstable leaf
 at the point $y$.

Since the family of admissible leaves $\Sigma$ is transversal to the unstable leaves
(with a uniform lower bound on their angle), 
there exist $\lambda\in (0,1)$ and $C>0$ independent of $y$ and $W$ 
such that for all $n\geq 0$ such that $|DF^n|$ is defined,
\begin{equation}\label{eq-dist1}
\Big| |DF^n|^{-1} J_W F^n \Big|_\infty\leq C\lambda^{-n}, \quad |J_W F^n |_\infty\leq C\lambda^{-n}
\end{equation}
Because $F$ is hyperbolic, uniformly on all $Y_j$, we have by \eqref{eq-dist} and
e.g.\ \cite[Appendix A]{DemersLiverani08} that there exists some $C, C', C''>0$
such that for all $Y_j$ and all $z,w$ in the same connected component of $Y_{n,j} \setminus\cS^{+n} \subset Y_j$ and 
all $W\in\Sigma$, 
\begin{equation}\label{eq-distort1}
\begin{cases}
\Big| \frac{|DF^n(z)|}{|DF^n(w)|}-1\Big| \leq C\max\{d(z,w), d(F^n(z),F^n(w))\},\\[3mm]
\Big| \frac{J_WF^n(z)}{J_WF^n(w)}-1\Big|\leq C'\max\{d(z,w), d(F^n(z),F^n(w))\},\\[3mm]
\Big| \frac{J_uF^n(z)}{J_uF^n(w)}-1\Big|\leq C''\max\{d(z,w), d(F^n(z),F^n(w))\},
\end{cases}
\end{equation}
where
\begin{equation}\label{eq-distance}
d(z,w)=\|\chi_j^{-1}(z)-\chi_j^{-1}(w)\|
\end{equation}
with $\|\,\|$ denoting the Euclidean distance on $[0, L_u(Y_j)] \times [0,1]$. 
Recall that $\cY_n=\{Y_{n,j}\}$ is the Markov partition for $F^n$.

\begin{lemma}
There is a constant $C > 0$ such that for every $W \in \Sigma$ and 
$W_j = F^{-n}(W) \cap Y_{n,j}$,
\begin{equation}\label{eq-dist3}
\Big| |DF^n|^{-1}J_{W_j} F^n\Big|_{\infty}\leq C m(Y_{n,j})\ 
\mbox{ and }\ \sum_{W_j\in{\cY_n}}\Big| |DF^n|^{-1}J_{W_{n,j}} F^n\Big|_{\infty}<\infty.
\end{equation}
\end{lemma}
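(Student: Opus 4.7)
My plan is to reduce everything to two scalar estimates: controlling $(J_u F^n)^{-1}$ by the unstable diameter of $Y_{n,j}$, and then controlling that diameter by $m(Y_{n,j})$. First, using \eqref{eq:Ctheta}, I would rewrite
$$
|DF^n|^{-1} J_{W_j} F^n = \frac{1}{C_\theta \, J_u F^n}.
$$
Transversality of the cones (Remark~\ref{rem:add_def}) keeps the angle $\theta$ bounded away from $0$, hence $C_\theta$ is bounded below by a uniform positive constant. So it suffices to bound $(J_u F^n)^{-1}$ by a multiple of $m(Y_{n,j})$ on $W_j$.

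The Markov property supplies the first of these bounds. Any maximal unstable leaf $\widetilde W^u \subset Y_{n,j}$ is mapped by $F^n$ onto a maximal unstable leaf of $Y'_{n,j}$, whose length is $\geq L$ by \eqref{eq:L}. The bounded distortion estimate for the unstable Jacobian in \eqref{eq-distort1} (applied on the connected component $Y_{n,j}$) then gives, for every $y \in W_j$,
$$
J_u F^n(y) \, |\widetilde W^u| \;\geq\; \tilde c \, |F^n(\widetilde W^u)| \;\geq\; \tilde c L,
$$
so that $(J_u F^n(y))^{-1} \leq C \, |W^u(Y_{n,j})|$, where $|W^u(Y_{n,j})|$ denotes the length of the maximal unstable leaf through any point of $Y_{n,j}$ (these are all comparable by bounded distortion).

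For the passage from the unstable length to Lebesgue measure, the crucial observation is that $\cY_n = \bigvee_{k=0}^{n-1} F^{-k}\cY$ refines $\cY$ only in the unstable direction: pulling back a Markov rectangle under $F^{-1}$ expands the stable direction and contracts the unstable one, so each $F^{-k}\cY$ subdivides $Y$ only transversally to the stable foliation. Consequently the stable length of $Y_{n,j}$ equals the stable length of the $\cY$-element $Y_{j_0}$ containing it, and by the assumption stated at the beginning of Section~\ref{subsec-admleaves} this is uniformly bounded below. Combined with uniform transversality of the two cone families, this yields $m(Y_{n,j}) \geq c''\, |W^u(Y_{n,j})|$, and chaining with the previous step produces the first claim of the lemma.

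The second claim is then immediate: the sets $Y_{n,j}$ for which $W_j = F^{-n}(W) \cap Y_{n,j}$ is nonempty are pairwise disjoint subsets of $Y \subset \bT^2$, so the pointwise bound gives
$$
\sum_{W_j \in \cY_n} \big| |DF^n|^{-1} J_{W_j} F^n\big|_\infty \;\leq\; C \sum_j m(Y_{n,j}) \;\leq\; C \, m(Y) < \infty.
$$
The main technical point I expect to have to justify carefully is the preservation of stable lengths under the refinement $\cY \leadsto \cY_n$, which rests on invertibility of $F$ and the uniform angle between the two foliations; everything else is a routine chain of three bounds.
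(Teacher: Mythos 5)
Your proof is correct and follows essentially the same route as the paper's: factor $|DF^n|^{-1}J_{W_j}F^n$ through \eqref{eq:Ctheta} into $(C_\theta J_uF^n)^{-1}$, bound $(J_uF^n)^{-1}$ by $L_u(Y_{n,j})$ using that $F^n(Y_{n,j})$ has unstable length $\geq L$ together with the distortion control \eqref{eq-distort1}, compare $L_u(Y_{n,j})$ with $m(Y_{n,j})$ via the uniformly bounded-below stable lengths of the $\cY_n$-elements and transversality, and sum over the disjoint $Y_{n,j}$. Your write-up actually spells out the comparability $m(Y_{n,j})\approx L_u(Y_{n,j})$ in more detail than the paper does, but the argument is the same.
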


\begin{proof}
The Markov partition $\mathcal Y$ of the induced map $F: Y \to Y$
has elements of the form $\{ \varphi = n\}$, $n \geq 2$, so still
the stable lengths of partition elements are $\geq L$ (see \eqref{eq:L}), and this remains true
for elements in ${\mathcal Y}_n$. Analogously, the unstable lengths of the 
elements of the image partition ${\mathcal Y}'_n$ are also $\geq L$.

Therefore $m(Y_{n,j}) \approx L_u(Y_{n,j})$ and $L_u(F^n(Y_{n,j})) \approx 1$. 
Let $W \in \Sigma$ be a stable leaf and $W_j := F^{-n}(W) \cap Y_{n,j}$.
Recall that $|\det(DF^n(y))|=C_\theta(y)J_W F^n(y)J_uF^n(y)$,
where $C_\theta(y)$ is as in \eqref{eq:Ctheta}.
These together with the distortion control \eqref{eq-distort1} of the unstable derivatives, give
$$
\Big| |DF^n|^{-1} J_{W_j}F^n \Big|_\infty \approx \Big| \Big( J_uF^n|_{W_j} \Big)^{-1} \Big|_\infty \approx
\frac{L_u(Y_{n,j})}{L} \approx L_u(Y_{n,j}) \approx m(Y_{n,j}).
$$
Now to sum over all pieces $W_j$ of $F^{-n}(W)$, note that each $W_j$ lies in a separate
element $Y_{n,j}$. Therefore, there is $C > 0$ such that
$$
\sum_j \Big| |DF^n|^{-1} J_{W_j}F^n \Big|_\infty \leq C \sum_j m(Y_{n,j}) \leq m(Y) < \infty,
$$
as required.
\end{proof}

\vspace{-1ex}
\paragraph{Test functions:}
In what follows, for $W\in\Sigma$ and $q\leq 1$ we denote by $C^q(W)$ the Banach space of complex
valued functions on $W$ with  H\"older exponent $q$ and norm
\[
|\phi|_{C^q(W)}=\sup_{z\in W} |\phi(z)|+\sup_{z,w\in W}\frac{|\phi(z)-\phi(w)|}{d(z,w)^q}.
\]
Note that for a given $W_j\in\Sigma \cap Y_j$, $C^q(W)$ is isomorphic to $C^q([0,1])$ 
via the identification of the domain given by the representation via charts in~\eqref{eq-chart}. 
Throughout we will use such an identification without further notice. In particular, given $\phi\in C^q([0,1])$ 
we still call $\phi$ the corresponding function in $C^q(W_{Y_j}^s)$ and
using~\eqref{eq-chart} we write
\begin{equation}
\label{eq-integrleave}
\int_{W_j} h\phi\, dm=\int_{0}^{1} h\circ \chi_j(g(\eta) ,\eta) \phi_g(\eta)\sqrt{1+g'(\eta)^2}\, d\eta,
\end{equation}
where $\phi_g(\eta)=\phi(g(\eta),\eta)$ and $\sqrt{1+g'(\eta)^2}\, d\eta$ is 
essentially the arc length (Lebesgue) measure on $\chi^{-1}_j(W_j)$.
\begin{remark}
Note that we use $m$ both for the one dimensional and two dimensional Lebesgue measure. 
\end{remark}
\vspace{-1ex}

\paragraph{Definition of the norms:}

Given $h\in C^1(Y,\C)$, define the \emph{weak norm} by
\begin{equation}\label{eq-weaknorm}
\|h\|_{\cB_w}:=\sup_{W\in\Sigma}\;\sup_{|\phi|_{ C^1(W)}\leq 1 }\int_W h\phi\, dm.
\end{equation}

Given $q\in [0,1)$ we define the \emph{strong stable norm} by

\begin{equation}\label{eq-strongnormst}
\|h\|_s:=\sup_{W\in\Sigma}\;\sup_{|\phi|_{ C^q(W)}\leq 1 }\int_W h\phi\, dm.
\end{equation}

Finally, recalling~\eqref{eq-dist} we define the \emph{strong unstable norm} by
\begin{equation}\label{eq-strongnormunst}
\|h\|_u:=\sup_\ell \sup_{W,\tilde W\in\Sigma \cap Y_\ell}
\sup_{\stackrel{|\phi|_{C^1(W)}, |\phi|_{C^1(\tilde W)} \leq 1}{d(\phi,\tilde\phi) \leq d(W,\tilde W)}}
\frac{1}{d(W,\tilde W)}\left|\int_{W} h\phi\, dm -\int_{\tilde W} h\phi\, dm\right|,
\end{equation} 
where 
$$
d(\phi,\tilde\phi) = | \phi \circ \chi_\ell( g(\eta), \eta) - \tilde\phi \circ \chi_\ell(\tilde g(\eta), \eta)|_{C^1([0,1])}.
$$
The \emph{strong norm} is defined by $\|h\|_{\cB}=\|h\|_s+\|h\|_u$.

\paragraph{Definition of the Banach spaces:}
We will see in Lemma~\ref{lemma-embed} that $\|h\|_{\cB_w}+\|h\|_{\cB}\leq C\|h\|_{C^1}$. We then define $\cB$ to be the completion of $C^1$ in the strong norm and $\cB_w$ to be the completion in the weak norm.

The spaces $\cB$ and $\cB_w$ defined above are simplified versions of functional spaces defined
in~\cite{DemersLiverani08} (adapted to the setting of~\eqref{eq:diffeo}).
The main difference in the present setting is the simpler definition of admissible leaves and the absence of a control on short leaves. This is possible due to the Markov structure of the diffeomorphism.

\subsection{Embedding properties: verifying (H1)(i)}

The next result shows that (H1)(i) holds for  $\cB, \cB_w$ as described above.

\begin{lemma}{\cite[Lemma 7.2]{LT}}
\label{lemma-embed} For all $q\in (0,1)$ in definition \eqref{eq-strongnormst} we have\footnote{Here the inclusion is meant to signify a continuous embedding of Banach spaces.}
\[
C^1\subset \cB\subset \cB_w\subset(C^1)'.
\]
Moreover, the unit ball of $\cB$ is relatively compact in $\cB_w$.
\end{lemma}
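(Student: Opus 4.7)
The four inclusions plus the compactness each require a separate argument, the first three being routine and the last being the core technical step.

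\emph{The chain of continuous embeddings.} For $C^1\subset\cB$: given $h\in C^1$, the stable norm is controlled by $|\int_W h\phi\,dm|\leq |h|_\infty|\phi|_\infty\,|W|\leq C|h|_{C^1}$ since admissible leaves have uniformly bounded length. For the unstable norm, parametrise $W,\tilde W\in\Sigma\cap Y_\ell$ by graphs $(g(\eta),\eta),(\tilde g(\eta),\eta)$ via \eqref{eq-chart}, expand to first order using that both $h$ is $C^1$ and $\phi,\tilde\phi$ match up to distance $d(W,\tilde W)$, and bound
\[
\left|\int_W h\phi\,dm-\int_{\tilde W}h\tilde\phi\,dm\right|\leq C|h|_{C^1}\,d(W,\tilde W).
\]
For $\cB\subset\cB_w$, the comparison $|\phi|_{C^q(W)}\leq C|\phi|_{C^1(W)}$ gives $\|h\|_{\cB_w}\leq C\|h\|_s\leq C\|h\|_\cB$. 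For $\cB_w\subset (C^1)'$, disintegrate Lebesgue measure on each Markov rectangle $Y_j$ along its stable foliation via the chart $\chi_j$, so that
\[
\int_{Y_j}h\phi\,dm=\int_0^{L_u(Y_j)}\Bigl(\int_{W_s} h\phi\,dm_{W_s}\Bigr)ds,
\]
bound the inner integral by $\|h\|_{\cB_w}|\phi|_{C^1}$, and sum over $j$ using $\sum_j m(Y_j)=m(Y)<\infty$ combined with the uniform lower bound on stable lengths; then extend by density of $C^1$ in $\cB_w$.

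\emph{Relative compactness of the unit ball.} Given $(h_n)\subset\cB$ with $\|h_n\|_\cB\leq 1$, the plan is a two-step diagonal extraction to produce a subsequence Cauchy in $\cB_w$. First, on any fixed admissible leaf $W$, the functionals $\phi\mapsto\int_W h_n\phi\,dm$ lie in the ball of $(C^q(W))^*$ of radius $\|h_n\|_s\leq 1$; because the inclusion $C^1(W)\hookrightarrow C^q(W)$ is compact by Arzelà--Ascoli on $[0,1]$, their restrictions to $C^1(W)$ form a relatively compact subset of $(C^1(W))^*$, and a subsequence converges in that dual norm. Second, within each Markov element $Y_\ell$ the graphs $g_{Y_\ell,W}$ satisfy a uniform $C^4$ bound coming from the smoothness of the stable foliation, so $\Sigma\cap Y_\ell$ is totally bounded in the metric $d$; for any $\delta>0$ one selects a finite $\delta$-net $\{W_{\ell,1},\dots,W_{\ell,N(\ell,\delta)}\}$, and the unstable norm gives
\[
\Bigl|\int_W h_n\phi\,dm-\int_{W_{\ell,i}}h_n\phi_i\,dm\Bigr|\leq\|h_n\|_u\,d(W,W_{\ell,i})\leq\delta
\]
with a matching test function $\phi_i$. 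A diagonal extraction over the countable union of nets $\{W_{\ell,i}\}_{\ell,i}$, using Step A on each reference leaf, then yields a subsequence $(h_{n_k})$ with $\|h_{n_k}-h_{n_l}\|_{\cB_w}\to 0$ after letting $\delta\to 0$.

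\emph{Main obstacle.} The delicate point is Step B, because $\Sigma$ splits into infinitely many components $\Sigma\cap Y_\ell$ (leaves in distinct $Y_\ell$ are at infinite $d$-distance) and the unstable norm controls only intra-component variation. The compactness argument therefore must handle the tail $\bigcup_{\ell>N}Y_\ell$ uniformly in $\ell$; this is carried out as in \cite{DemersLiverani08,LT} by observing that the $C^q$-equicontinuity coming from $\|h_n\|_s\leq 1$ applies uniformly across all leaves irrespective of which $Y_\ell$ they sit in, so the finite $\delta$-nets inside each $Y_\ell$ combine into a countable totally-bounded collection in the quotient where the $d$-metric is collapsed over components, and the diagonal extraction goes through globally. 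The remaining verification is routine book-keeping of the uniform constants.
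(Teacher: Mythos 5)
Your three embeddings are fine and follow essentially the same route as the paper: the norm comparison for $\cB\subset\cB_w$, the first-order expansion along graphs $(g(\eta),\eta)$ for $C^1\subset\cB$ (this is the paper's estimate \eqref{eq-emb1}), and disintegration along stable leaves for $\cB_w\subset(C^1)'$ (the paper delegates this to Proposition~\ref{prop-embed}). The problem is in the compactness step, precisely at the point you flag as the ``main obstacle'', and your proposed resolution does not close it. A diagonal extraction over a \emph{countable} union of finite $\delta$-nets gives convergence of $\int_{W_{\ell,i}}h_{n_k}\phi_i\,dm$ for each fixed reference leaf, but $\|\cdot\|_{\cB_w}$ is a supremum over \emph{all} leaves; without a uniform control of the tail $\bigcup_{\ell>N}\Sigma\cap Y_\ell$ you cannot pass from leafwise convergence to Cauchyness in $\cB_w$. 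The appeal to ``$C^q$-equicontinuity applying uniformly across all leaves'' only bounds the size of $\int_W h_n\phi\,dm$ on each leaf (by $\|h_n\|_s\le 1$); it says nothing about how these values on a leaf deep in the tail relate to the values on any reference leaf, so the extraction cannot be made uniform this way. As stated, your argument would equally well ``prove'' compactness for a family indexed by a non-totally-bounded set of leaves, which is false.

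The ingredient you are missing is that your premise ``leaves in distinct $Y_\ell$ are at infinite $d$-distance'' is wrong for the tail components. By the second clause in the definition of $d(W,\tilde W)$, leaves lying in different strips $Y_k=\{\varphi=k-j_0\}$ and $Y_{\tilde k}$ with $j_0+2\le k<\tilde k$ are at distance at most $\sup_\eta|g(\eta)-L_u(Y_k)-\tilde g(\eta)|+\sum_{j=k+1}^{\tilde k-1}L_u(Y_j)$, and since the strips $\{\varphi=n\}$ accumulate on a single stable leaf with $\sum_n L_u(\{\varphi=n\})<\infty$, the whole tail $\bigcup_{n\ge N}\Sigma\cap\{\varphi=n\}$ has diameter $<\ve$ for $N$ large. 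This is exactly what allows the paper to choose a single \emph{finite} $\ve$-net $\cN_\ve$ for all of $\Sigma$ (with $\#\cN_\ve\le K/\ve$), after which the argument is a standard total-boundedness proof: a finite set of test functions from Arzel\`a--Ascoli, the finite-dimensional map $K_\ve(h)=(\int_{\tilde W_j}h\phi_i\,dm)_{i,j}$, and a covering of the unit ball by preimages of small balls, each of which is a $4(C+1)\ve$-ball in $\cB_w$. Your sequential-extraction formulation would work once you replace the countable union of nets by this single finite net; without that replacement the proof is incomplete.
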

\begin{proof}
By the definition of the norms it follows that $\|\cdot\|_{\cB_w}\leq \|\cdot\|_s\leq\|\cdot\|_{\cB}$.
From this the inclusion $\cB\subset \cB_w$ follows. 

Using~\eqref{eq-integrleave}, for each $h, \phi\in C^1$ and for each $j$ and $W, \tilde W\in Y_j\in\cY$, 
\begin{align}\label{eq-emb1}
\nonumber &|\int_{W} h\phi\, dm-\int_{\tilde W} h\phi\, dm| \\
\nonumber &=
\Big|\int_{0}^{1} h\circ \chi_j(g(\eta),\eta))\sqrt{1+g'(\eta)^2}\, \phi_g(\eta)
-h\circ \chi_j(\tilde g(\eta),\eta))\sqrt{1+\tilde g'(\eta)^2}\, \phi_{\tilde g}(\eta)\, d\eta\Big|\\
\nonumber &\leq 
\Big|\int_{0}^{1} \Big(h\circ \chi_j(g(\eta),\eta))-h\circ \chi_j(\tilde g(\eta),\eta)\Big)\sqrt{1+g'(\eta)^2}\, \phi_g(\eta)\, d\eta\Big|\\
\nonumber & \quad +\Big|\int_{0}^{1} h\circ \chi_j(\tilde g(\eta),\eta)\Big(\sqrt{1+g'(\eta)^2}-\sqrt{1+\tilde g'(\eta)^2}\Big) \phi_g(\eta)\, d\eta\Big|\\
\nonumber & \quad +\Big|\int_{0}^{1} h\circ \chi_j(\tilde g(\eta),\eta) \sqrt{1+\tilde g'(\eta)^2}(\phi_g(\eta)-\phi_{\tilde g}(\eta))\, d\eta\Big|\\
\nonumber &\leq C\|h\|_{C^1}\|\phi\|_{C^0}(\sup_{\eta\in [0,1]}|\chi(g(\eta),\eta)-\chi(\tilde g(\eta),\eta)|+|g'-\tilde g'|_\infty)\\
&\nonumber \quad + C' \| h \|_{C^0} \sup_{\eta \in [0,1]}|\phi_g(\eta) - \phi_{\tilde g}(\eta)|\\
&
\leq C( \|h\|_{C^1}\|\phi\|_{C^0} + \| h \|_{C^0} \| \phi \|_{C^1} )d(W, \tilde W).
\end{align}
The above implies that $\|h\|_u\leq C \|h\|_{C^1}$. Thus $C^1\subset \cB$.

The other inclusion is an immediate consequence of Proposition~\ref{prop-embed}, an analogue of~\cite[Lemma 3.3]{DemersLiverani08}.
The injectivity follows from the injectivity of the standard inclusion of $C^1$ in $(C^1)'$.

Next, we need to show that the unit ball $B_1$ of $\cB$ has compact closure in $\cB_w$. 
Note that it suffices to show that it is 
totally bounded, i.e., for each $\ve>0$, it can be covered by finitely many $\ve$-balls in the $\cB_w$ norm. 

For any $\ve>0$, let $\cN_\ve$ be a finite collection of leaves in $\Sigma$ 
such that for any $W\in\Sigma$, there exists $\tilde W\in \cN_\ve$ with $d(W,\tilde W)\le\ve$. 
Let $N_\ve := \#(\cN_\ve) \leq K/\ve$ for some constant $K$ independent of $\ve$.
Say $\cN_\ve = \{ \tilde W_j \}_{j=1}^{N_\ve}$.
By~\eqref{eq-emb1} together with $\|h\|_u\leq C |h|_{C^1}$, for every $W\in\Sigma$ and test 
function $\phi\in C^1$ with  $|\phi|_{C^1}\leq 1$,
there exists $\tilde W\in \cN_\ve$ such that
\[
\left|\int_Wh\phi\, dm-\int_{\tilde W}h\phi\, dm \right|\leq \ve\|h\|_u.
\]
On the other hand, by the Arzel\`a-Ascoli theorem, for each $\ve > 0$ there exist a
finite collection $\{\phi_i\}_{i=1}^{M_\ve}\subset C^1([0,1])$ which is $C^q$ $\ve$-dense in the unit ball of $C^1$.
Accordingly, for each $\phi\in C^1$ with $|\phi|_{C^1}\leq 1$ and for every $W\in\Sigma$ and $\tilde W\in \cN_\ve$ 
such that $d(W,\tilde W)\le\ve$, there exists $\phi_i$ such that
\begin{align}\label{eq:compact-short}
\nonumber\left|\int_Wh\phi\, dm-\int_{\tilde W}h\phi_i\, dm \right|
&\leq\left|\int_W h(\phi-\phi_i)\, dm\right|+ \left|\Big(\int_W-\int_{\tilde W}\Big)h\phi_i\, dm\right|\\
&\leq C \|h\|_{\cB_w}\, |\phi-\phi_i|_{C^1}
+C \ve\|h\|_u\leq 2C \ve\|h\|_{\cB}.
\end{align}
Let $K_\ve:\cB_w\to\C^{M_\ve N_\ve}$ 
be defined by $[K_\ve(h)]_i=\int_{\tilde W_j}h\phi_i\,dm$ for some $j \in \{ 1, \dots, N_\ve\}$. 
Clearly, $K_\ve$ is a continuous map.
Since the image of the unit ball $B_1$ under $K_\ve$ is contained in 
$\{a \in \C^{M_\ve N_\ve}\;:\; |a_{ij}|\leq 1\}$, it has a compact closure. 
Hence, there exists finitely many $a^k \in \C^{M_\ve N_\ve}$ such that the sets
\[
U_{k,\ve}=\left\{h\in\cB_w\;:\; \left|\int_{\tilde W_j}h\phi_i\, dm-a_{ij}^k\right|\leq \ve, \,\forall i,j\right\}
\]
cover $B_1$. To conclude note that if $h_1,h_2\in U_{k,\ve}$, then, by \eqref{eq:compact-short}, 
there exist $i$ and $\tilde W$ such that
\[
\begin{split}
\left|\int_W(h_1-h_2)\phi\, dm \right|
&\leq \left|\int_{\tilde W_j}(h_1-h_2)\phi_i\, dm \right|+2C\ve\|h_1-h_2\|_{\cB}\\
&\leq \left|\int_{\tilde W_j}h_1\phi_i\, dm-a_{ij}^k\right|+
\left|\int_{\tilde W_j}h_2\phi_{i}\, dm-a_{ij}^k\right|+4C\ve\\
&\leq 4(C+1)\ve.
\end{split}
\]
This means that each $U_{k,\ve}$ is contained in a $4(C+1)\ve$-ball in the $\cB_w$ norm and the 
conclusion follows.
\end{proof}

\subsection{Verifying (H2)}
The first step in verifying (H2) is

\begin{prop}\label{prop-embed}
For any $j\geq 1$, let  $Y_j\in\cY$. Let $W,\tilde W\in \Sigma\cap Y_j$ and 
write $W=\{\chi_j(g(\eta),\eta) : \eta\in [0,1]\}$ 
and  $\tilde W=\{ \chi_j(\tilde g(\eta),\eta) : \eta\in [0,1]\}$.
Let $E=\{\chi_j(y,\eta): g(\eta)\leq y \leq \tilde g(\eta), \eta\in [0,1]\}$ 
and note that $\Id_E|_{W^*}$ is constant on any $W^*\in \Sigma\cap Y_j$.
Then for any such set $E$,  for all $\phi\in C^1(Y_j)$
and for all $h\in\cB_w$, we have $\Id_E h\in \cB_w$. Moreover, 
\[
|\langle\Id_E h, \phi\rangle| \leq  \|h\|_{\cB_w} |\phi |_{C^1} m(E).
\]
\end{prop}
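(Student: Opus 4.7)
The plan is to prove this by disintegrating the two-dimensional Lebesgue measure on $Y_j$ along the admissible stable foliation inside $E$, and then applying the weak-norm bound \eqref{eq-weaknorm} leaf by leaf. By density of $C^1$ in $\cB_w$ (Lemma~\ref{lemma-embed}), and since the bound depends only on $\|h\|_{\cB_w}$, it suffices to establish the estimate for $h\in C^1$; the linear map $h\mapsto(\phi\mapsto\int_E h\phi\, dm)$ then extends continuously from $C^1$ to $\cB_w$, defining $\Id_E h$ as an element of $(C^1)'$. Membership of $\Id_E h$ in $\cB_w$ follows from the hypothesis that $\Id_E|_{W^\ast}$ is constant on every $W^\ast\in\Sigma\cap Y_j$: for any admissible $W\in\Sigma$ either $W\subset E$ or $W\cap E=\emptyset$, and in either case the weak-norm seminorm over $W$ is dominated by $\|h\|_{\cB_w}$.

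For $h\in C^1$, I would work in the chart $\chi_j^{-1}$ and write the admissible leaves foliating $E$ as $W_\tau^\ast=\{(G_\tau(\eta),\eta):\eta\in[0,1]\}$, with $\tau\in[\tau_0,\tau_1]$ so that $G_{\tau_0}=g$ and $G_{\tau_1}=\tilde g$. The $C^4$ smoothness of stable leaves together with bounded distortion of $\chi_j$ (both from Section~\ref{subsec-admleaves}) imply that $(\tau,\eta)\mapsto(G_\tau(\eta),\eta)$ is a $C^1$ diffeomorphism onto $\chi_j^{-1}(E)$ whose Jacobian $\partial_\tau G_\tau(\eta)$ is bounded above and below by constants independent of $j$. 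Using this change of variables and the formula \eqref{eq-integrleave} yields
\[
\int_E h\phi\, dm\;=\;\int_{\tau_0}^{\tau_1}\int_{W_\tau^\ast} h\,\psi_\tau\, dm_{W_\tau^\ast}\, d\tau,
\]
where $\psi_\tau=\phi\cdot\partial_\tau G_\tau/\sqrt{1+(G_\tau')^2}$ satisfies $|\psi_\tau|_{C^1(W_\tau^\ast)}\le C|\phi|_{C^1(Y_j)}$ uniformly in $\tau$ and $j$. Applying \eqref{eq-weaknorm} to each inner integral and integrating in $\tau$, together with the comparison $\tau_1-\tau_0\asymp m(E)$ coming from the bounded Jacobian, produces the claimed bound.

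The main obstacle is the uniformity in $j$: I need to verify that the chart distortion, the Jacobian $\partial_\tau G_\tau$, and the arc-length factor $\sqrt{1+(G_\tau')^2}$ are all bounded above and below by constants independent of the partition element $Y_j\in\cY$, so that the implicit constant in the final bound does not degenerate across the countable Markov partition. This is precisely where the uniform lower bound \eqref{eq:L} on stable and unstable lengths, the uniform hyperbolicity \eqref{eq-dist}, and the distortion properties \eqref{eq-distort1} of $F$ come into play, all of which are available from the setup of Section~\ref{subsec-admleaves}.
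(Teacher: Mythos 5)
Your argument is correct and follows essentially the same route as the paper: disintegrate Lebesgue measure on $Y_j$ along the stable foliation saturating $E$, apply the weak-norm bound \eqref{eq-weaknorm} leaf by leaf, and recover the factor $m(E)$ from the total transverse mass, with membership $\Id_E h\in\cB_w$ coming from the dichotomy $W\subset E$ or $W\cap E=\emptyset$ for admissible leaves. The only difference is cosmetic: the paper asserts abstractly the existence of a disintegration measure $\nu$ on the leaf space and treats $\Id_E$ as a leafwise-constant weight with $\|\Id_E\|_{L^1}=m(E)$, whereas you realise the disintegration concretely through the chart parametrisation $\tau\mapsto G_\tau$; the uniformity issues you flag are exactly the constants the paper absorbs into $\nu$ and into the suppressed multiplicative constant.
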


\begin{proof}
First note that on any $Y_j\in\cY$, the Lebesgue measure $m$ can be decomposed according to 
the collection $\cW_j=\{W_\ell\}$ of stable leaves on $Y_j$.
That is, there exists a measure $\nu$ on $\cW_j$ such that for any $\psi\in L^1(m)$
\begin{align*}
\int_{Y_j} h\, dm=\int_{\cW_j}d\nu\int_{W_\ell} h\, dm.
\end{align*}
Let  $h\in C^1$ and consider $v\in L^\infty$ such that $v|_W$ is constant for any $W\in\Sigma$. 
Using~\eqref{eq-integrleave}, we have that
\begin{align*}
\left|\int_Y  h v\phi\, dm \right|&\leq\sum_j \int_{\cW_j} |v| d\nu\left|\int_{W_\ell} h\phi\,dm \right|\\
&\leq \sum_j \int_{\cW_j} |v| d\nu_j \left|\int_0^1  h\circ \chi_j(g(\eta),\eta))\tilde\phi(\eta) 
\sqrt{ 1 + g'^2(\eta)} \, d\eta\right|\\
&\leq \|h\|_{\cB_w} |\phi |_{C^1} \|v\|_{L^1}.
\end{align*}
To see that $\Id_E h\in\cB_w$ note that $E$ is a union of unstable leaves (between $W$ and $\tilde W$) and that the following are the only possibilities:

\begin{itemize}
\item If $W\in\Sigma$ but $W\notin E$ then  $\int_W \Id_E h\phi \,dm=0$;
\item If $W\in\Sigma\cup E$ then $\int_W 1_E h\phi \,dm=\int_W h\phi \,dm$, so  $\|\Id_E h\|_{\cB_w}\leq  \|h\|_{\cB_w}$.
\end{itemize}
Also, from the previous displayed equation with $v=\Id_E$, we obtain that
\[
|\langle\Id_E h, \phi\rangle|\leq \|h\|_{\cB_w}\|\phi\|_{C^1(Y_j)} m(E)
\]
as required.~\end{proof}

Note that the connected components of $\{\varphi = n\}$ satisfy the assumption on the set $E$ in 
the statement of Proposition~\ref{prop-embed}. Therefore
\[
\left|\int_E h\, dm\right|\leq  \|h\|_{\cB_w} m(E).
\]
We still need to argue that the same is true with $\mu$ instead of $m$ on the right hand side. 
Let $\cW^u$, $\cW^s$ be the collection of unstable, stable, respectively, leaves restricted to $E$.
There exists  measures $\nu^s$ on $\cW^s$ and $\nu^u$ on $\cW^u$ such that for any $W^s\in \cW^s$ and 
$W^u\in \cW^u$,
\begin{align*}
\Big|\int_E h \, dm\Big|&=\Big|\int_{\cW^s}\, d\nu^s \int_{W^s} h\, dm|_{W^s}\Big|\leq \|h\|_{\cB}\Big|
\int_{\cW^s}\, \int_{W^s} \, dm|_{W^s}\, d\nu ^s\Big|\\
&= \|h\|_{\cB}\Big|\int_E \, dm\Big|=  \|h\|_{\cB}\Big|\int_{\cW^u}\, \int_{W^u} \, dm|_{W^u}\, d\nu^u\Big|
\end{align*}
Since $\mu$ is absolutely continuous w.r.t.\ $m$ on the unstable leaves with density bounded away from $0$, 
there exist $C$ such that $C^{-1}\leq \frac{d\mu|_{W^u}}{dm|_{W^u}}$. Thus,
\begin{align*}
\Big|\int_E h \, dm \Big|\leq \|h\|_{\cB}\Big|\int_{\cW^u}\, \int_{W^u} \, d\mu|_{W^u}\, d\nu^u\Big|
=C\|h\|_{\cB}\Big|\int_E \, d\mu\Big|=C\|h\|_{\cB}\mu(E).
\end{align*}

\subsection{Transfer operator: definition}\label{subsec-Trop}

If $h\in L^1(m)$, then $R:L^1(m)\to L^1(m)$ acts on $h$ by
\[
\int_Y Rh\cdot v\, dm=\int_Y h\cdot v\circ F\, dm,\quad v\in L^\infty.
\]
By a change of variables we have
\begin{equation}\label{eq:transfer-L1}
Rh=\Id_Y h\circ F^{-1}|DF^{-1}|,
\end{equation}
where $|DF^{-1}|=|\det(DF^{-1})|$.
Note that, in general, $R C^1\not\subset C^1$, so it is not obvious that the operator $R$ has any chance of being well defined in $\cB$.
The next lemma addresses this problem, using the existence of the Markov partition.

\begin{lemma}\label{lem:R-well-def}
With the above definition, $R(C^1)\subset \cB$.
\end{lemma}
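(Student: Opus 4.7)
Since $\cB$ is the completion of $C^1$ in the strong norm, showing $Rh\in\cB$ amounts to (i) bounding $\|Rh\|_{\cB}$ in terms of the $C^1$-norm of $h$, and (ii) exhibiting a sequence $h_n\in C^1$ with $\|h_n-Rh\|_{\cB}\to 0$. The plan is to first run the standard change of variables along an admissible leaf to split $Rh$ as a sum over Markov preimages, control each piece by the distortion and contraction estimates from Section~\ref{subsec-admleaves}, and then truncate $Rh$ away from the countably many partition boundaries to get a $C^1$ approximation.

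First, for $W\in\Sigma\cap Y_k$ and $\phi\in C^1(W)$, I would use \eqref{eq:transfer-L1} together with the Markov structure to write
\[
\int_W Rh\,\phi\,dm_W=\sum_{j}\int_{W_j} h\cdot(\phi\circ F)\cdot\frac{J_{W_j}F}{|DF|}\,dm_{W_j},
\]
where $W_j=F^{-1}(W)\cap Y_j\in\Sigma\cap Y_j$ is the admissible preimage leaf. The weight $\psi_j:=(\phi\circ F)\cdot J_{W_j}F/|DF|$ is $C^1$ on $W_j$, and using contraction of $F$ on stable leaves together with the distortion bounds \eqref{eq-distort1} one gets $|\psi_j|_{C^q(W_j)}\leq C\lambda^{-q}|\phi|_{C^q(W)}\cdot\big|J_{W_j}F/|DF|\big|_\infty$ for any $q\in[0,1]$. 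Summing via \eqref{eq-dist3} (applied with $n=1$) then yields
\[
\|Rh\|_s\leq C\|h\|_s\sum_j m(Y_{1,j})\leq C'\|h\|_s,
\]
which also controls $\|Rh\|_{\cB_w}$.

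For the unstable norm I would fix $W,\tilde W\in\Sigma\cap Y_\ell$ together with admissible test functions $\phi,\tilde\phi$ with $d(\phi,\tilde\phi)\leq d(W,\tilde W)$. The Markov structure guarantees that the set of preimage indices $j$ for $W$ and for $\tilde W$ is the same; identifying the corresponding preimage leaves $W_j,\tilde W_j\in\Sigma\cap Y_j$ (which satisfy $d(W_j,\tilde W_j)\leq C\lambda^{-1}d(W,\tilde W)$) and adding and subtracting a common test function transplanted from $W_j$ to $\tilde W_j$, I would split the relevant difference into one piece controlled by $\|h\|_u$ and a piece controlled by $\|h\|_s$. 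The $C^1$ closeness of the weights $\psi_j,\tilde\psi_j$ follows from distortion. Summing over $j$, the factor $\sum_j |J_{W_j}F/|DF||_\infty$ is again finite by \eqref{eq-dist3}, giving $\|Rh\|_u\leq C\|h\|_{\cB}$, and therefore $\|Rh\|_{\cB}\leq C\|h\|_{\cB}\leq C'\|h\|_{C^1}$ by the embedding in Lemma~\ref{lemma-embed}.

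Finally, for (ii), note that on the interior of each image partition element $F(Y_j)$ the function $Rh$ is $C^1$, since $h$ is $C^1$ and $F|_{Y_j}$ is a smooth diffeomorphism onto $F(Y_j)$; the only obstruction to $C^1$ smoothness lies on the stable/unstable boundaries of the elements $Y_j'\in F(\cY)$. I would then introduce $C^1$ cutoff functions $\chi_\varepsilon$, supported outside the $\varepsilon$-neighbourhood of $\bigcup_j \partial F(Y_j)$ and equal to $1$ outside the $2\varepsilon$-neighbourhood, with $|\chi_\varepsilon|_{C^1}=O(\varepsilon^{-1})$, and set $h_\varepsilon:=\chi_\varepsilon\cdot Rh\in C^1$. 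The difference $(1-\chi_\varepsilon)Rh$ is supported on a union of thin strips; its intersection with any admissible leaf has one-dimensional Lebesgue measure $O(\varepsilon)$, and $Rh$ is uniformly bounded, so both the $\|\cdot\|_s$ and $\|\cdot\|_u$ contributions tend to $0$ as $\varepsilon\to 0$. This exhibits $Rh$ as a strong-norm limit of $C^1$ functions and completes the proof.

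The main obstacle is (ii): ensuring the cutoff approximation converges in the strong norm \emph{uniformly} across the countable family of partition boundaries, so that the contribution from the union of $\varepsilon$-neighbourhoods is still $O(\varepsilon)$. This is where the fact that the image partition $F(\cY)$ has unstable lengths bounded below by \eqref{eq:L} and the summability \eqref{eq-dist3} are essential: they guarantee that the total ``boundary length'' seen by admissible leaves is finite, and hence the cutoff procedure can be carried out simultaneously on all partition elements.
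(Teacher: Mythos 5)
Your overall strategy---split $Rh$ over the Markov preimages, control the resulting sums via \eqref{eq-dist3}, and exhibit $Rh$ as a strong-norm limit of $C^1$ functions obtained by cutting off near its singularity set---is the same as the paper's, and your part (i) is essentially the $n=1$ case of Proposition~\ref{prop-LSinequality} (it is not actually needed for membership in the completion; only the approximation step matters). The gaps are in the implementation of the cutoff in part (ii). First, $Rh=h\circ F^{-1}|DF^{-1}|$ is \emph{not} uniformly bounded: on $F(Y_j)$ the Jacobian $|DF^{-1}|$ is comparable to $m(Y_j)/m(F(Y_j))$, and the widths of $\{\varphi=n\}$ and of $F(\{\varphi=n\})$ decay at different polynomial rates (exponents $1+\beta_2$ and $1+\beta_0$), so this ratio need not be bounded over the countably many elements. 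The summable quantity is $\bigl|\,|DF|^{-1}J_{W_j}F\bigr|_\infty$ from \eqref{eq-dist3}, which only appears after changing variables to the preimage leaves; the error therefore has to be estimated there, not pointwise on the image side. Second, the $\varepsilon$-neighbourhood of $\bigcup_j\partial F(Y_j)$ does not meet every admissible leaf in a set of measure $O(\varepsilon)$: the unstable boundary curves of the $F(Y_j)$ accumulate (every strip of stable width less than $2\varepsilon$ is swallowed whole), and an admissible leaf lying within $\varepsilon$ of a \emph{stable} boundary piece of some $F(Y_j)$ is contained entirely in the neighbourhood, so $\|(1-\chi_\varepsilon)Rh\|_s$ is not $O(\varepsilon)$ as claimed.

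Third, and most seriously, the unstable norm of $(1-\chi_\varepsilon)Rh$ does not tend to zero with your cutoff. In \eqref{eq-strongnormunst} one compares two nearby leaves $W,\tilde W$ at the same chart parameter $\eta$ and divides by $d(W,\tilde W)$; since $|\chi_\varepsilon|_{C^1}=O(\varepsilon^{-1})$, the variation of $\chi_\varepsilon$ between corresponding points contributes a factor $O(\varepsilon^{-1})$ on the support of its gradient, whose trace on a leaf has measure of order $\varepsilon$ times the number of strips of width greater than $2\varepsilon$ --- a number that diverges as $\varepsilon\to0$ --- so the product does not vanish. The paper's construction is designed precisely to avoid this: the cutoff $\bar\psi_n(\eta)$ is a function of the chart parameter $\eta$ alone, applied identically on every preimage leaf $W_j$ and then pushed forward by $F$. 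It therefore cancels exactly in the unstable-norm difference between $W_j$ and $\tilde W_j$, and the total error factors as $\int_0^1|1-\bar\psi_n(\eta)|\,d\eta$ times $\sum_j\bigl|\,|DF|^{-1}J_{W_j}F\bigr|_\infty$, which tends to zero by \eqref{eq-dist3}. If you replace your two-dimensional $\varepsilon$-neighbourhood cutoff by such a leaf-parameter cutoff, applied only near the unstable boundaries of the image elements, your argument can be repaired along the lines of the paper's proof.
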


\begin{proof}
Using the notation introduced at the beginning of Section~\ref{subsec-admleaves}, 
we have $F(Y)=\cup_j Y_j'$. Moreover, both $F^{-1}$ and $\det(DF^{-1})$ are $C^1$ on each $\overline{Y_j'}$.
For each $j\in\bN$, $Y_j'$ is bounded by the stable curves $\gamma_0$, $\gamma_1$ defined via~\eqref{eq-chart} by
\[
\gamma_0(\eta)=\chi_j((g_j^0(\eta),\eta)), \quad \gamma_1(\eta)=\chi_j((g_j^1(\eta),\eta)), \eta\in[0,1].
\]
In particular, the above representation implies  that $|\gamma_0'|_\infty, |\gamma_1'|_\infty<\infty$. Thus,
we can consider a sequence of $\bar\psi_{n}\in C_0^1(\bR, [0,1])$ that converges monotonically to 
$\Id_{[0,1]}$.

Next, note that for every stable leaf $W \in \Sigma$,  $F^{-1}W=\cup_j W_j$ 
where $W_j=F^{-1}W\cap Y_j$ are (possibly infinitely many) stable leaves. With this notation, given the sequence $\bar\psi_n$
introduced above,  we define $\tilde \psi_{n,W_j}(\chi_j(g(\eta)),\eta) := \bar\psi_n(\eta)$.
With these specified, we further define the function 
$\psi_n=\tilde \psi_{n,W_j}(g(\eta),\eta)\circ F^{-1}\cdot \Id_{F(Y)}$ and note that 
$\psi_n$ is smooth and converges monotonically to $\Id_W$. For $h\in C^1$, 
let
\[
\cH_n = \psi_n h \circ F^{-1} |DF^{-1}| \in C^1,
\]
and compute that\footnote{ Since $W_j\subset Y$,  $F(W_j)\subset F(Y)$. 
Thus, when restricted on $W_j$, $\Id_{F(Y)}\circ F|_{W_j}=1_{W_j}$.}
\[
\begin{split}
\left|\int_W[Rh-\cH_n]\phi\right|\, dm &\leq \sum_j\int_{W_j} |h|\, \Big||DF|^{-1}J_{W_j}F\Big|\, |\phi\circ F|\, 
|1_{W_j}-\tilde \psi_{n,W_j}|\, dm \\
&\leq |h|_\infty\|\phi\|_\infty\sum_j  \Big||DF|^{-1}J_{W_j}F\Big|_\infty\int_{W_j} 
|1_{W_j}-\tilde\psi_{n, W_j}|\, dm.
\end{split}
\]
By~\eqref{eq-integrleave} and the fact that $\tilde \psi_{n,W_j}(g(\eta),\eta)=\bar\psi_{n}(\eta)$,
\[
\int_{W_j} |1_{W_j}-\tilde \psi_{n,W_j}|\leq \int_{0}^{1} |1-\bar\psi_n(\eta)|\sqrt{1+g'(\eta)^2}\, d\eta\leq C\int_{0}^{1} |1-\bar\psi_n(\eta)|\, d\eta.
\]
Thus,
\[
\begin{split}
\left|\int_W[Rh-\cH_{n}]\phi\, dm\right|
\leq C\|h\|_\infty\|\phi\|_\infty\int_{0}^{1} |1-\bar\psi_n(\eta)|\, d\eta\sum_j \, 
\Big||DF|^{-1}J_{W_j}F\Big|_\infty.
\end{split}
\]
By~\eqref{eq-dist3}, the sum is convergent and thus, $|\int_W[Rh-\cH_n]\phi\, dm|$  converges to zero as $n\to\infty$. 
As a consequence, $\cH_n$ converges to $Rh$ in $\cB_w$ and $\lim_{n\to\infty}\|Rh-\cH_n\|_s=0$. 

It remains to check the unstable norm. Let $\phi$ such that $|\phi|_{C^1(Y)}\leq 1$. Using~\eqref{eq-integrleave},
for any $W,\tilde W\in Y_j$ and for any $j\geq 1$ we compute that
\[
\begin{split}
&\Big|\int_{W}[Rh-\cH_n]\phi\, dm -\int_{\tilde W}[Rh -\cH_n]\phi\, dm \Big|\\
&\leq\sum_{j}\int_0^1\Big|h\circ\chi_j(g_j(\eta),\eta)|DF\circ\chi_j(g_j(\eta),\eta)|^{-1}J_{W_j}F\circ\chi_j(g_j(\eta),\eta)
\phi\circ F \circ\chi_j(g_j(\eta),\eta)\\
&\quad - h\circ\chi_j(\tilde g_j(\eta),\eta)|DF\circ\chi_j(\tilde g_j(\eta),\eta)|^{-1}J_{W_j}
F\circ\chi_j(\tilde g_j(\eta),\eta)\phi\circ F(\tilde g_j(\eta),\eta)\Big|\,\left|1-\bar\psi_n(\eta)\right|\,d\eta.
\end{split}
\]
Using that $h,\phi\in C^1$, recalling~\eqref{eq-dist1}\footnote{Here we also use that for $\eta\in [0,1]$,
$|DF(g_j(\eta),\eta)|^{-1}J_{W_j}F(g_j(\eta),\eta)|\leq \Big||DF|^{-1}J_{W_j}F\Big|_\infty$.}
 and the fact that $|g-\tilde g|_\infty=d(W,\tilde W)$ we obtain
that for some $C>0$,
\[
\left|\int_{W}[Rh-\cH_n]\phi\, dm -\int_{\tilde W}[Rh-\cH_n]\phi\, dm \right|
\leq Cd(W,\tilde W) \int_0^1\left|1-\bar\psi_n(\eta)\right|\, d\eta\, \sum_j  \Big||DF|^{-1}J_{W_j}F\Big|_\infty
\]
and we conclude using~\eqref{eq-dist3}.
\end{proof}

\subsection{Lasota-Yorke inequality and compactness: verifying (H5)(i) and (H1)(ii).}

The next lemma is the basic result on which all the theory rests.

\begin{prop}[Lasota--Yorke inequality] \label{prop-LSinequality} 
For each $z\in\overline\bD$, $n\in\bN$ and $h\in C^1(Y)$ we have  
\[
\begin{split}
&\|R(z)^n h\|_{\cB_w}\leq C|z|^n\|h\|_{\cB_w},\\[1mm]
&\|R(z)^n h\|_{\cB} \leq  \lambda^{-nq}|z|^n \|h\|_{\cB}+C|z|^n \|h\|_{\cB_w}.
\end{split}
\]
\end{prop}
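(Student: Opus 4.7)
The plan is to express $R(z)^n$ via the change-of-variables formula and estimate each piece separately in the weak and strong norms. For $W \in \Sigma$ and $\phi \in C^1(W)$, writing $F^{-n}(W) = \bigcup_j W_{n,j}$ with $W_{n,j} \subset Y_{n,j}$, and noting that $\varphi$ is constant on each element of $\cY$, hence $S_n\varphi := \sum_{k=0}^{n-1}\varphi\circ F^k$ is constant on each $Y_{n,j}$, I would write
\[
\int_W R(z)^n h\cdot\phi\,dm = \sum_j \int_{W_{n,j}} h\cdot\tilde\phi_j\,dm, \qquad \tilde\phi_j := z^{S_n\varphi}\,|DF^n|^{-1}J_{W_{n,j}}F^n\,\phi\circ F^n.
\]
The distortion controls~\eqref{eq-distort1} together with $|J_{W_{n,j}}F^n|_\infty\leq C\lambda^{-n}$ then give
\[
|\tilde\phi_j|_{C^1(W_{n,j})} \leq C|z|^n \big||DF^n|^{-1}J_{W_{n,j}}F^n\big|_\infty\, |\phi|_{C^1(W)},
\]
with an additional gain $\lambda^{-nq}$ when $C^1$ is replaced by $C^q$, coming from $|\phi\circ F^n|_{C^q(W_{n,j})} \leq (J_{W_{n,j}}F^n)^q |\phi|_{C^q(W)}$.

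For the weak norm, I would apply the definition of $\|\cdot\|_{\cB_w}$ to each pullback integral and sum using the summability in~\eqref{eq-dist3}, obtaining $\|R(z)^n h\|_{\cB_w} \leq C|z|^n \|h\|_{\cB_w}$. For the strong stable norm, the same computation with $C^q$ test functions carries the factor $\lambda^{-nq}$ through the summation, yielding $\|R(z)^n h\|_s \leq C\lambda^{-nq}|z|^n \|h\|_s$. For the strong unstable norm, I would take $W,\tilde W\in\Sigma\cap Y_\ell$ and test functions $\phi,\tilde\phi$ with $|\phi|_{C^1},|\tilde\phi|_{C^1}\leq 1$ and $d(\phi,\tilde\phi)\leq d(W,\tilde W)$, and use that $F^n$ expands the unstable direction by $\lambda^n$ to ensure that corresponding preimage pieces lie in a common $Y_{n,j}$ and satisfy $d(W_{n,j},\tilde W_{n,j}) \leq C\lambda^{-n} d(W,\tilde W)$. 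Introducing the transport $\tilde\phi_j^\sharp$ of $\tilde\phi_j$ onto $\tilde W_{n,j}$ via the common chart $\chi_{j'}$ (so that $\tilde\phi_j^\sharp$ and $\tilde\phi_j$ have the same expression as functions of $\eta\in[0,1]$, hence $d(\tilde\phi_j,\tilde\phi_j^\sharp)=0$), I would decompose
\[
\int_{W_{n,j}} h\tilde\phi_j - \int_{\tilde W_{n,j}} h\tilde{\tilde\phi}_j = \Big(\int_{W_{n,j}} h\tilde\phi_j - \int_{\tilde W_{n,j}} h\tilde\phi_j^\sharp\Big) + \int_{\tilde W_{n,j}} h(\tilde\phi_j^\sharp-\tilde{\tilde\phi}_j),
\]
estimate the first summand using $\|h\|_u$ (which supplies the factor $d(W_{n,j},\tilde W_{n,j}) \leq C\lambda^{-n} d(W,\tilde W)$) and the second using $\|h\|_{\cB_w}$, with $|\tilde\phi_j^\sharp-\tilde{\tilde\phi}_j|_{C^1(\tilde W_{n,j})} \leq C|z|^n ||DF^n|^{-1}J_{\tilde W_{n,j}}F^n|_\infty\,d(W,\tilde W)$ following from $d(\phi,\tilde\phi)\leq d(W,\tilde W)$ together with the distortion bounds~\eqref{eq-distort1}. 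Summing over $j$, dividing by $d(W,\tilde W)$ and invoking~\eqref{eq-dist3} would then yield
\[
\|R(z)^n h\|_u \leq C\lambda^{-n}|z|^n\,\|h\|_u + C|z|^n\,\|h\|_{\cB_w},
\]
and combined with the strong-stable estimate (noting that $\lambda^{-n}\leq \lambda^{-nq}$ for $q\leq 1$) this gives the claimed bound.

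The main obstacle is the second summand in the unstable-norm decomposition. Under pullback by $F^n$ the transversal separation $d(W_{n,j},\tilde W_{n,j})$ contracts by $\lambda^{-n}$, but the variation of the pulled-back test functions, inherited from $d(\phi,\tilde\phi)\leq d(W,\tilde W)$, does not shrink at the same rate, so the pair $(\tilde\phi_j,\tilde{\tilde\phi}_j)$ does not satisfy the matching hypothesis $d(\cdot,\cdot)\leq d(W_{n,j},\tilde W_{n,j})$ required by $\|h\|_u$; the weak-norm correction is precisely what absorbs this residual. Once the two inequalities are established for $h\in C^1$, extension to all $h\in\cB$ follows by density, using Lemma~\ref{lem:R-well-def} to guarantee that the pullback representation and the bounds extend continuously.
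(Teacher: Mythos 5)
Your weak-norm estimate and your treatment of the strong unstable norm follow the paper's argument closely: the transported test function $\tilde\phi_j^\sharp$ with $d(\tilde\phi_j,\tilde\phi_j^\sharp)=0$ plays exactly the role of the function $\tilde\psi$ in \eqref{eq:psi}, the $\|h\|_u$ term picks up the factor $d(W_{n,j},\tilde W_{n,j})\le C\lambda^{-n}d(W,\tilde W)$, and the mismatch is absorbed by the weak norm. That part is fine.

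There is, however, a genuine error in your strong stable estimate. You claim $\|R(z)^n h\|_s\leq C\lambda^{-nq}|z|^n\|h\|_s$, with the gain $\lambda^{-nq}$ coming from $|\phi\circ F^n|_{C^q(W_{n,j})}\leq (J_{W_{n,j}}F^n)^q|\phi|_{C^q(W)}$. This inequality is false for the full $C^q(W)$ norm as defined in \eqref{eq-strongnormst}: composition with $F^n$ contracts only the H\"older \emph{seminorm} $\text{H\"ol}_q(\phi\circ F^n)\le C|J_{W_{n,j}}F^n|_\infty^q\,\text{H\"ol}_q(\phi)$, while the sup-norm part satisfies only $\sup_{W_{n,j}}|\phi\circ F^n|\le\sup_W|\phi|$ with no gain whatsoever. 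Indeed the bound you state cannot hold: it would force $\|R^n h\|_s\to0$ for every $h$, contradicting $R^n 1\to\mu$ with $\|\mu\|_s>0$. The missing idea is the paper's decomposition $\phi\circ F^n=(\phi\circ F^n-\bar\phi_j)+\bar\phi_j$ with $\bar\phi_j=|W_{n,j}|^{-1}\int_{W_{n,j}}\phi\circ F^n\,dm$: the oscillation satisfies $|\phi\circ F^n-\bar\phi_j|_{C^0(W_{n,j})}\le\text{H\"ol}_q(\phi\circ F^n)|W_{n,j}|^q\le C\lambda^{-nq}$, which yields the contracted term $C\lambda^{-nq}|z|^n\|h\|_s$, while the constant part $\bar\phi_j$ is paired with the weak norm and produces the additional term $C|z|^n\|h\|_{\cB_w}$ that appears in the statement. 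Without this averaging step the stable part of the Lasota--Yorke inequality does not close.
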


\begin{proof}
Write $\vf_n=\sum_{k=0}^{n-1}\vf\circ F^k$ and note that $R(z)^n h=R^n(z^{\vf_n} h)$. 
Given $W\in\Sigma$, write $F^{-n}(W)=\cup_{j\in\bN} W_j$. Let $\cW=\{W_j\}_{j\in\bN}\subset \Sigma$ 
be the collections of the connected components of $F^{-n}(W)$;
each $Y_{n,j}\in\cY_n$ contains at most one $W_j$. 
For a test function $\phi\in C^1$, we have
\[
\begin{split}
\int_W (R(z)^n h)\phi\, dm&=\int_Wz^{\vf_n \circ F^{-n} }\Id_{F^n(Y)}h\circ F^{-n}|DF^{-n}|\phi\, dm \\
&=\sum_{W_j\in \cW}\int_{W_j} z^{\vf_n }\Id_{F^n(Y)}h\frac{J_{W_j}F^n}{|DF^n|}\phi\circ F^n\, dm.
\end{split}
\]
Since $\vf_n$ is constant on the elements $Y_{n,j}$ of $\cY_n$ and  $\vf_n\geq n$, we have $|z^{\vf_n}|\leq |z|^n$. The same is true for  $z^{\vf_n\circ F^n}$,
that is $|z^{\vf_n\circ F^n}|\leq |z|^n$. Let $\phi\in C^1$ such that $|\phi |_{C^1(W)}\leq 1$ and compute that
\[
\begin{split}
\left|\int_W (R(z)^n h)\phi\, dm\right|&\leq 
\sum_{W_j\in\mathcal{W}} \left|\int_{W_j} z^{\vf_n} h \phi\circ  F^n\frac{J_{W_j}F^n}{|DF^n|} \,dm\right|\\
&\leq \sum_{Y_{n,j}\in\cY_n}\|h\|_{\cB_w}\left|\phi\circ  F^n\frac{J_{W_j}F^n}{|DF^n|}\right|_{C^1(W_j)} |z|^n .
\end{split}
\]
{\bf Weak norm:} 
W.r.t.\ the $C^0$ norm, we have
\[
\Big|\frac{J_{W_j}F^n}{|DF^n|}\phi\circ  F^n\Big|_{C^0(W_j)}\leq |\phi\circ  F^n|_{C^0(W_j)}\Big||DF^n|^{-1}J_{W_j}F^n\Big|_\infty.
\]
W.r.t.\ the $C^1$ norm, we have
\begin{align*}
\Big|\frac{J_{W_j}F^n}{|DF^n|}\phi\circ  F^n\Big|_{C^1(W_j)}&\leq 
||DF^n|^{-1}J_{W_j}F^n|_{C^1(W_j)}|\phi\circ  F^n|_{C^0(W_j)}\\
&\quad + ||DF^n|^{-1}J_{W_j}F^n |_{C^0(W_j)}|\phi\circ  F^n|_{C^1(W_j)}.
\end{align*}
By definition,
\[
|\phi\circ F^n|_{C^1(W_j)}=\sup_{z\in W_j} |\phi\circ F^n(z)|+
\sup_{z,w\in W_j}\frac{|\phi\circ F^n(z)-\phi\circ F^n(w)|}{d(F^n(z),F^n(w))}.
\]
But for $z,w\in W_j$, 
\begin{equation}\label{eq-Cphi1}
\frac{|\phi\circ F^n(z)-\phi\circ F^n(w)|}{d(F^n(z),F^n(w))}\frac{d(F^n(z),F^n(w))}{d(z,w)}
\leq C|\phi|_{C^1(W)} |J_{W_j}F^n|_{C^1(W_j)}\leq \lambda^{-n} |\phi|_{C^1(W)},
\end{equation}
where in the last inequality we have used~\eqref{eq-dist1}. Hence, $|\phi\circ  F^n|_{C^1(W_j)}\leq C  |\phi|_{C^1(W_j)}$.
Putting the above together,
\begin{equation}\label{eq:weak-ly-test}
\Big|\frac{J_{W_j}F^n}{|DF^n|}\phi\circ  F^n\Big|_{C^1(W_j)}\leq C \Big||DF^n|^{-1}J_{W_j}F^n\Big|_\infty
\, | \phi|_{C^1(W)}.
\end{equation}
As a consequence, we can estimate the weak norm as follows:
\begin{align*}\label{eq:weak-ly}
\left|\int_W (R(z)^n h)\phi\, dm\right|\leq C\|h\|_{\cB_w}|z|^n\sum_j \Big||DF^n|^{-1}J_{W_j}F^n\Big|_\infty\leq C\|h\|_{\cB_w}|z|^n,
\end{align*}
where we have used~\eqref{eq-dist3}. This ends the proof of the first claimed inequality in $\cB_w$.
\\[2mm]
{\bf Strong stable norm:} 
Given  $|\phi|_{C^q(W)}\leq 1$, we have
\begin{equation}
\label{eq-stab}
\begin{split}
\left|\int_W (R(z)^n h)\phi\, dm\right|&\leq \sum_j \left|\int_{W_j} h \frac{J_{W_j}F^n}{|DF^n|}\phi\circ  F^n\,dm\right|\,|z|^n\\
&\leq  \sum_j \left|\int_{W_j} h \hat \phi_j\,dm\right|\,|z|^n+\left|\int_{W_j} h \frac{J_{W_j}F^n}{|DF^n|}\bar \phi_j\,dm\right|\,|z|^n,
\end{split}
\end{equation}
where
\[
\hat \phi_j=\frac{J_{W_j}F^n}{|DF^n|}(\phi\circ  F^n-\bar\phi_j), \qquad \bar \phi_j=|W_j|^{-1}\int_{W_j}\phi\circ  F^n\, dm.
\]
Proceeding as in~\eqref{eq-Cphi1},
\begin{align*}
\frac{\phi\circ  F^n(z)-\phi\circ  F^n(w)}{d(z,w)^q}\leq  C|\phi|_{C^q(W)} |J_{W_j}F^n|_{C^1(W_j)}^q.
\end{align*}
Thus, if $\text{H\"ol}_q(\phi)$ is the H\"older constant of $\phi$, then the distortion bounds \eqref{eq-distort1}
together with the above inequality imply that
\[
\text{H\"ol}_q(\phi\circ F^n)\leq C |J_{W_j}F^n|_{C^0(W_j)}^q \text{H\"ol}_q(\phi).
\]
As a consequence, using \eqref{eq-dist1} we obtain
\begin{align*}
|\phi\circ  F^n-\bar\phi_j|_{C^0(W_j)}&\leq |\sup_{W_j}\phi\circ  F^n-\inf_{W_j}\phi\circ  F^n|
\leq \text{H\"ol}_q(\phi\circ F^n)|W_j|^q\\
&\leq C |J_{W_j}F^n|_{C^0(W_j)}^q \text{H\"ol}_q(\phi)\leq C |J_{W_j}F^n|_\infty^q\leq C\lambda^{-nq}.
\end{align*}
Next, note that
\begin{align*}
|\hat \phi_j|_{C^q(W_j)}& \leq |\phi\circ  F^n-\bar\phi_j|_{C^q(W_j)}||DF^n|^{-1}J_{W_j}F^n|_{C^0(W_j)}\\
&\quad + |\phi\circ  F^n-\bar\phi_j|_{C^0(W_j)}|\, |DF^n|^{-1}J_{W_j}F^n|_{C^q(W_j)}\\
&\leq 2|\phi\circ  F^n-\bar\phi_j|_{C^0(W_j)}|\, |DF^n|^{-1}J_{W_j}F^n|_\infty.
\end{align*}
Putting the above together,
\[
|\hat \phi_j|_{C^q(W_j)}\leq  C\lambda^{-nq} ||DF|^{-1}J_{W_j}F^n|_\infty,
\]
which together with~\eqref{eq-stab} implies that
\[
\left|\int_W (R(z)^n h)\phi\, dm\right|\leq C\|h\|_s\lambda^{-nq}\,|z|^n +C\|h\|_{\cB_w}\,|z|^n.
\]
{\bf Strong unstable norm:} 
Let $W,\tilde W\in \Sigma\cap Y_\ell$. Using the chart $\chi_\ell$ there is a natural bijection
\begin{equation}\label{eq:v}
v: \tilde W \to W, \qquad v \circ \chi_\ell(\tilde g(\eta), \eta)) = \chi_\ell(g(\eta), \eta)).
\end{equation}
Write $F^{-n}(W)=\cup_{j\in\bN} W_j$, 
$F^{-n}(\tilde W)=\cup_{j\in\bN} \tilde W_j$ with $W_j, \tilde W_j \subset Y_{n,j}$
and let $v_j:\tilde W_j \to W_j$, $y \mapsto F^{-n} \circ v \circ F^n(y)$ be the corresponding
bijection between the preimage leaves.

Let $\phi:W \to \R$ and $\tilde \phi:\tilde W \to \R$ be such that $|\phi|_{C^1(W)}, |\tilde\phi|_{C^1(\tilde W)} \leq 1$.
Define $\tilde \psi:\tilde W \to \R$ by
\begin{equation}\label{eq:psi}
\tilde\psi(y) = \phi(v(y)) \frac{ (J_{W_j}F^n \, |DF^n|^{-1}|) \circ F^{-n}(v(y))}
{ (J_{\tilde W_j}F^n \, |DF^n|^{-1}|) \circ F^{-n}(y)}.
\end{equation}
This choice of $\tilde\psi$ is such that
$$
d( J_{W_j}F^n \, |DF^n|^{-1}\, \phi \circ F^n, J_{\tilde W_j}F^n \, |DF^n|^{-1}\, \tilde\psi \circ F^n) = 0,
$$
so that normalising $\phi$ and $\tilde \psi$ by some factor doesn't change the distance
between these quantities.

Compute that
\begin{align*}
\Big|\int_{W} R(z)^nh\phi\, dm&-\int_{\tilde W} R(z)^nh\phi\, dm\Big| \\
\le& |z|^n \sum_j \Big| \int_{W_j}h\frac{J_{W_j}F^n}{|DF^n|}\phi\circ  F^n \, dm
-\int_{\tilde W_j} h\frac{J_{W_j}F^n}{|DF^n|}\tilde\psi\circ  F^n \, dm\Big| \\
&+ |z|^n \sum_j \Big|\int_{\tilde W_j} \frac{J_{W_j}F^n \circ v_j}{|DF^n|}
h (\tilde\phi - \tilde\psi)\circ  F^n\Big| \, dm = S_1+S_2.
\end{align*}
Now by \eqref{eq-dist1} and \eqref{eq:weak-ly-test},
\begin{align*}
S_1\leq \|h\|_u |z|^n \sum_j d(W_j,\tilde W_j) | |DF^n|^{-1} J_{W_j}F^n|_\infty
\leq  C \lambda^{-n} \|h\|_u |z|^n \sum_j d(W_j,\tilde W_j).
\end{align*}
Recall $L > 0$ from \eqref{eq:L} and that $L_u(Y_{n,j})$ is the largest unstable length of $Y_{n,j}$ 
and note that 
\[
d(W_j,\tilde W_j)\leq  C \frac{L_u(Y_{n,j})}{L} d(W,\tilde W).
\]
Hence, renaming $C/L$ to $C$,
\begin{align*}
S_1\leq C \lambda^{-n} \|h\|_u  |z|^n  \sum_j L_u(Y_{n,j})\, d(W,\tilde W) 
\leq C\|h\|_u  |z|^n d(W,\tilde W) .
\end{align*}
For $S_2$, using the definition of $\tilde\psi$, we split 
$$
(\tilde \phi - \tilde\psi) \circ F^n = (\tilde \phi - \phi \circ v) \circ F^n
+ \Big( \frac{|DF^n|^{-1}  J_{W_j} F^n \circ v_j}
{|DF^n|^{-1}  J_{\tilde W_j} F^n} - 1 \Big)  \phi \circ v \circ F^n.
$$
For the first term, \eqref{eq:weak-ly-test} gives
\begin{equation}\label{eq:w1}
| J_{\tilde W_j} F^n |DF^n|^{-1} (\tilde \phi - \tilde\psi) \circ F^n |_{C^1(\tilde W_j)}
\leq C | J_{\tilde W_j} F^n |DF^n|^{-1}|_\infty |\tilde \phi - \phi \circ v|_{C^1(\tilde W)}
\end{equation}
and $|\tilde \phi - \phi \circ v|_{C^1(\tilde W)} =  d(\tilde \phi, \phi) \leq d(W, \tilde W)$. 
Using the weak norm and summing over $j$ we get
$$
\sum_j \Big|\int_{\tilde W_j} \frac{J_{W_j}F^n \circ v_j}{|DF^n|}
h (\tilde\phi - \phi \circ v)\circ  F^n\Big| 
\leq C \| h \|_{\cB_w} d(W, \tilde W).
$$
Finally, since $\Big( \frac{|DF^n|^{-1}  J_{W_j} F^n \circ v_j}
{|DF^n|^{-1}  J_{\tilde W_j} F^n} - 1\Big) \leq C' d(W,\tilde W)$ by \eqref{eq-distort1},
using the weak norm and summing again over $j$ we get
\begin{equation}\label{eq:w2}
\sum_j \int_{\tilde W_j} \frac{J_{W_j}F^n \circ v_j}{|DF^n|} h \phi \circ v \circ  F^n 
\Big| \frac{|DF^n|^{-1}  J_{W_j} F^n \circ v_j} {|DF^n|^{-1}  J_{\tilde W_j} F^n} - 1 \Big|
\leq C \| h \|_{\cB_w} d(W, \tilde W).
\end{equation}
Thus, $S_2 \leq 2C\|h\|_{\cB_w} |z|^n d(W,\tilde W)$.
Putting together the estimates for $S_1$ and $S_2$,
\[
\|R(z)^n h\|_{u} \leq C\|h\|_{\cB} \lambda^{-n}|z|^n + C \| h \|_{\cB_w}|z|^n,
\] 
and the conclusion follows.~\end{proof}

 Proposition~\ref{prop-LSinequality} and Lemma~\ref{lem:R-well-def} imply that $R(z)\in L(\cB,\cB)$, i.e., 
 Hypothesis (H1)(ii) holds true. 
As in~\cite{LT}, we note that Proposition~\ref{prop-LSinequality} alone would not suffice.
The fact that a function has a bounded norm does not imply that it belongs to $\cB$: for this, it is necessary to prove that it can be approximated  by $C^1$ functions in the topology of the Banach space.

The proof of Lemma~\ref{lem:R-well-def} holds essentially unchanged also for the operator $R(z)$, 
thus $R(z)\in L(\cB,\cB)$. We can then extend, by denseness, the statement of 
Proposition~\ref{prop-LSinequality} to all $h\in \cB$, whereby proving hypothesis (H5)(i).

\subsection{Verifying (H1)(iv) and (H5)(ii)}

The following result is an immediate consequence of Lemma~\ref{prop-LSinequality} and the compact embedding stated 
in Lemma~\ref{lemma-embed} (e.g.\ see \cite{Hen}).
\begin{lemma}\label{lem:compactenss}
For each $z\in\overline\bD$ the operator $R$ is quasi-compact with spectral radius bounded by $|z|$ and 
essential spectral radius bounded by $|z|\lambda^{-q }$.
\end{lemma}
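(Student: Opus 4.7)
The plan is to apply a standard Hennion-type (Doeblin--Fortet / Ionescu-Tulcea--Marinescu) quasi-compactness theorem, for which all the ingredients have already been assembled in the preceding subsections. Specifically, Hennion's theorem asserts that if a bounded operator $T$ on a Banach space $(\cB,\|\cdot\|_{\cB})$ satisfies
\[
\|T^n h\|_{\cB} \leq r_1^n \|h\|_{\cB} + K_n \|h\|_{\cB_w}
\]
for some auxiliary seminorm/norm $\|\cdot\|_{\cB_w}$ on $\cB$ such that the unit ball of $\cB$ is relatively compact in $\cB_w$, and if in addition the spectral radius of $T$ on $\cB$ is bounded by some $r_0 \geq r_1$, then $T$ is quasi-compact with essential spectral radius at most $r_1$ and spectral radius at most $r_0$.

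First I would identify the operator, the two norms, and the rates. Apply the second inequality of Proposition~\ref{prop-LSinequality} with $T = R(z)$, $r_1 = |z|\lambda^{-q}$, and $K_n = C|z|^n$; the constants $C,\lambda,q$ are independent of $n$ and $h$, so the iterated Lasota--Yorke inequality is of exactly the form required. Then I would invoke Lemma~\ref{lemma-embed}, which gives the compact embedding of the unit ball of $\cB$ into $\cB_w$ needed to run the Hennion argument. Next, I would extract an upper bound on the spectral radius of $R(z):\cB\to\cB$ from the first inequality of Proposition~\ref{prop-LSinequality}: since $\|R(z)^n h\|_{\cB_w} \leq C|z|^n\|h\|_{\cB_w}$ and $\|\cdot\|_{\cB_w}\leq\|\cdot\|_{\cB}$, combined with the second Lasota--Yorke inequality this yields $\|R(z)^n\|_{\cB} \leq (\lambda^{-q}|z|)^n + C|z|^n$, whose $n$-th root tends to $|z|$, so the spectral radius is at most $|z|$.

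Putting these pieces together via Hennion's theorem (as applied e.g.\ in~\cite{Hen}), one concludes that $R(z):\cB\to\cB$ is quasi-compact, with spectral radius at most $|z|$ and essential spectral radius at most $|z|\lambda^{-q}$, which is precisely the statement of the lemma. The only thing to verify carefully is that the hypotheses of Hennion's theorem are literally met -- i.e., that $R(z)$ indeed preserves $\cB$ (already noted after Lemma~\ref{lem:R-well-def} via the denseness argument extending Proposition~\ref{prop-LSinequality} to all of $\cB$) and that the compact embedding is continuous (the content of Lemma~\ref{lemma-embed}).

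I do not expect any genuine obstacle: once Proposition~\ref{prop-LSinequality} and Lemma~\ref{lemma-embed} are in hand, the conclusion is an immediate citation of Hennion's theorem, and the only subtlety is cosmetic bookkeeping of the two rates $|z|$ (spectral) and $|z|\lambda^{-q}$ (essential spectral).
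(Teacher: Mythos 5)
Your proposal is correct and is exactly the argument the paper intends: the lemma is stated there as an immediate consequence of the Lasota--Yorke inequality in Proposition~\ref{prop-LSinequality} together with the compact embedding of Lemma~\ref{lemma-embed}, via Hennion's theorem \cite{Hen}. Your write-up merely makes explicit the bookkeeping (identifying $r_1=|z|\lambda^{-q}$, $K_n=C|z|^n$, and the spectral radius bound $|z|$) that the paper leaves to the reader.
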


Note that $1$ belongs to the spectrum $\sigma(R)$ of $R$ (since the composition with $F$ is the 
dual operator to $R$ and $1\circ F=1$). By the spectral decomposition of $R$ it follows that 
$\frac 1n\sum_{i=0}^{n-1} R^i$ converges (in uniform topology) to the eigenprojector $P$ associated 
to the eigenvalue $1$. Recall that $\mu=P1$. 

The result below gives the characterization of the peripheral spectrum and it goes 
the same with both statement and proof as in~\cite{LT}.
\begin{lemma}{~\cite[Lemma 7.6]{LT} }
\label{lem:peripheral}
Let $\nu\in\sigma(R(z))$ with $|\nu|=1$. Then any associated eigenvector $h$ is a complex measure. Moreover, 
such measures are all absolutely continuous with respect to $\mu$ and have bounded Radon-Nikod\'ym derivatives.
\end{lemma}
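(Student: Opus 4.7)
By quasi-compactness (Lemma~\ref{lem:compactenss}), every $\nu\in\sigma(R(z))$ with $|\nu|=1$ is an isolated eigenvalue of finite multiplicity, so fix $h\in\cB$ with $R(z)h=\nu h$. My plan has three steps: (i) upgrade $h$ from a distribution in $\cB\subset(C^1)'$ to a complex Radon measure on $Y$; (ii) deduce $F$-invariance of $|h|$; and (iii) identify $|h|$ with a multiple of $\mu$ via uniqueness of the SRB measure.

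For step (i) the idea is to iterate the eigenvalue identity and exploit stable contraction in forward time. Writing $R(z)^n = R^n(z^{\varphi_n}\cdot)$ and changing variables, for any $W\in\Sigma$ and $\phi\in C^1$,
\[
\nu^n\int_W h\phi\, dm \;=\; \sum_j z^{\varphi_n|_{W_j}}\int_{W_j} h\cdot\Big(\phi\circ F^n\cdot \frac{J_{W_j}F^n}{|DF^n|}\Big)\, dm,
\]
with $W_j = F^{-n}(W)\cap Y_{n,j}$. The same estimates that drove Proposition~\ref{prop-LSinequality} yield
\[
\Big|\phi\circ F^n\cdot\frac{J_{W_j}F^n}{|DF^n|}\Big|_{C^1(W_j)} \;\le\; C\, m(Y_{n,j})\Big(|\phi|_\infty + \lambda^{-n}|\phi|_{C^1}\Big),
\]
combining \eqref{eq-dist3} (for the $C^0$-part) with the distortion bounds \eqref{eq-distort1} on the Jacobian ratio and the stable contraction $\lambda^{-n}$ on $\phi\circ F^n$. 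Pairing each summand against $h$ through $\|\cdot\|_{\cB_w}$, summing with $\sum_j m(Y_{n,j})\le m(Y)$, and using $|z|\le|\nu|=1$, one gets
\[
\Big|\int_W h\phi\, dm\Big| \;\le\; C\|h\|_{\cB_w}|\phi|_\infty + C\lambda^{-n}\|h\|_{\cB_w}|\phi|_{C^1}.
\]
Sending $n\to\infty$ kills the $|\phi|_{C^1}$ term, so $h|_W$ extends to a complex Radon measure of total variation at most $C\|h\|_{\cB_w}$. Disintegrating $m$ along stable leaves inside each Markov rectangle then assembles these local measures into a global complex Radon measure representing $h$.

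For step (ii), interpreting $R(z)$ on measures as $R(z)\eta = F_*(z^\varphi\eta)$, the triangle inequality and $|z|\le 1$ give $|h| = |\nu|^{-1}|R(z)h| \le F_*|h|$; integrating against $\Id_Y$ forces equality, so $|h|$ is a finite $F$-invariant non-negative Radon measure on $Y$. For step (iii), the $\cB$-regularity of $h$ transfers to an SRB-type disintegration of $|h|$ (absolutely continuous conditional measures on unstable leaves with bounded densities); by uniqueness of the SRB probability of $F$, $|h|=c\mu$ for some $c\ge 0$, whence $h\ll\mu$ and $|dh/d\mu|\equiv c$ $\mu$-a.e.

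The main obstacle will be step (i): upgrading an a priori $(C^1)'$-bound on $h$ to a Radon-measure bound $|\langle h,\phi\rangle|\le C|\phi|_\infty$. This is exactly where the Lasota--Yorke-type splitting pays off, with stable contraction $\lambda^{-n}$ beating the $C^1$-growth of $\phi\circ F^n$ and the Markov structure keeping the Jacobian-weighted sum $\sum_j m(Y_{n,j})$ finite. A subsidiary care point is that the uniqueness invoked in step (iii) must not presume (H1)(iv), since this lemma is what will be used to verify (H1)(iv) and (H5)(ii); it is handled via $\mu=P1$ and ergodicity of $\mu$ on its support.
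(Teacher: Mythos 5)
The paper does not reprove this lemma; it imports it verbatim from \cite[Lemma~7.6]{LT}, where the argument is the standard Ces\`aro-average domination: since the peripheral eigenspace is finite-dimensional (Lemma~\ref{lem:compactenss}) and $C^1$ is dense in $\cB$, every eigenvector can be written as $P_\nu g$ with $g\in C^1$, where $P_\nu=\lim_n\frac1n\sum_{k<n}\nu^{-k}R(z)^k$; then $|\langle P_\nu g,\phi\rangle|\le |g|_\infty\lim_n\frac1n\sum_{k<n}\langle R^k1,|\phi|\rangle=|g|_\infty\,\mu(|\phi|)$, which yields in one stroke that $h$ is a measure, that $h\ll\mu$, and that $|dh/d\mu|\le|g|_\infty$. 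Your route is genuinely different. Your step (i) is sound and is essentially the same Lasota--Yorke splitting used for $\mu=P1$ in Section~1.3; step (ii) (total-variation/mass argument giving $F$-invariance of $|h|$) is also fine, modulo the technicality that the identity $R(z)h=F_*(z^\varphi h)$ must be extended from $C^1$ to measure-valued $h$ by pairing against the only piecewise-$C^1$ function $z^\varphi\phi\circ F$.

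The genuine gap is in step (iii). The assertion that ``the $\cB$-regularity of $h$ transfers to an SRB-type disintegration of $|h|$'' is exactly the crux, and it is not immediate: the norms of $\cB$ and $\cB_w$ control integrals \emph{along stable leaves}, so they give no direct handle on the conditional measures of $|h|$ on \emph{unstable} leaves (note also that $h\in\cB$ does not imply $|h|\in\cB$). What the norms do give, via Proposition~\ref{prop-embed}, is $|h(E)|\le\|h\|_{\cB_w}\,m(E)$ for stable-saturated sets $E$, i.e.\ absolute continuity of the \emph{quotient} of $|h|$ along the stable foliation; combined with $\bar F$ being Gibbs--Markov (unique a.c.\ invariant measure on the quotient) and the fact that an $F$-invariant measure is determined by its stable quotient, this can be repaired --- but that chain of reasoning is precisely the missing content, and your appeal to ``uniqueness of the SRB probability of $F$'' presupposes regularity of $|h|$ that you have not established. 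The same vagueness affects your closing remark: ergodicity of $\mu$ for $F$ is itself an input that must come from somewhere (in \cite{LT} it comes from the Gibbs--Markov quotient). By contrast, the averaging argument sidesteps all of this by dominating directly with $R^k1\to\mu$.
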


Now suppose that $R h=e^{i\theta}h$. Then, by Lemma~\ref{lem:compactenss}, there exists 
$v\in L^\infty(\mu)$ such that $h=v\mu$. Hence
\[
\mu (v\phi)=h(\phi)=e^{-i\theta}h(\phi\circ F)=e^{-i\theta}\mu( v\phi\circ F)=e^{-i\theta}\mu(\phi v\circ F^{-1})
\]
implies $v=e^{i\theta}v\circ F\,$ $\mu$-almost surely. By similar arguments, 
if $z=e^{i\theta}$ and $R(z)h=R(e^{i\theta\vf}h)=h$, then there exists $v\in L^\infty(\mu)$ such 
that $ve^{i\theta \varphi}=v\circ F\,$ $\mu$-a.e.

\begin{prop} Hypotheses (H1)(iv) and (H5)(ii) hold true. 
\end{prop}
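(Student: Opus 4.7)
The plan is to deduce both (H1)(iv) and (H5)(ii) from Lemma~\ref{lem:peripheral} together with three standard ergodic-theoretic properties of the induced uniformly hyperbolic Markov system $(Y,F,\mu)$: ergodicity, weak mixing, and aperiodicity of the return time $\varphi$. All three follow from the Markov structure, the distortion estimates~\eqref{eq-distort1}, and the description of partition elements in Section~\ref{subsec-admleaves}, exactly as in~\cite{LT}; I would invoke them as a black box rather than re-deriving them.

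For (H1)(iv), quasi-compactness from Lemma~\ref{lem:compactenss} reduces matters to peripheral eigenvalues. If $Rh=h$ in $\cB$, then Lemma~\ref{lem:peripheral} and the computation displayed just after it give $h=v\mu$ with $v\in L^\infty(\mu)$ and $v=v\circ F$ $\mu$-a.e.; ergodicity of $F$ forces $v$ to be constant, so the $1$-eigenspace is one-dimensional. If instead $Rh=e^{i\theta}h$ with $\theta\in(0,2\pi)$, the same derivation yields $v=e^{i\theta}v\circ F$, so $|v|$ is $F$-invariant and hence constant; after normalising $|v|=1$, the function $\bar v$ becomes an $L^\infty$ eigenfunction of the Koopman operator of $F$ on $L^2(\mu)$ at the non-trivial eigenvalue $e^{i\theta}$, contradicting weak mixing. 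Thus $1$ is simple and isolated in the peripheral spectrum.

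For (H5)(ii), the case $|z|<1$ is immediate from the spectral-radius bound in Lemma~\ref{lem:compactenss}. For $z=e^{i\theta}$ with $\theta\in(0,2\pi)$, suppose $R(z)h=h$ with $h\in\cB\setminus\{0\}$; Lemma~\ref{lem:peripheral} and the analogous line following it yield $h=v\mu$, $v\in L^\infty(\mu)$, with $ve^{i\theta\varphi}=v\circ F$ $\mu$-a.e. Ergodicity renders $|v|$ constant, and after normalising $|v|=1$ the relation becomes the multiplicative cocycle equation $e^{i\theta\varphi}=(v\circ F)\bar v$. Aperiodicity of $\varphi$ now enters: in our set-up both $\{\varphi=1\}$ (points of $Y$ whose image stays in $Y$) and $\{\varphi=2\}$ (points entering $P_0$ for exactly one iterate) have positive $\mu$-mass, so $\gcd\{n:\mu(\{\varphi=n\})>0\}=1$; the standard Gou\"ezel-type argument, based on iterating the cocycle equation along Birkhoff sums and testing against $1_{\{\varphi=k\}}$ using mixing of $F$, then forces $\theta\in 2\pi\Z$, a contradiction.

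The main obstacle is the aperiodicity step: confirming that the piecewise-continuous hyperbolic $F$ really is ergodic and weakly mixing with respect to the SRB measure $\mu$, and that the cocycle equation $e^{i\theta\varphi}=(v\circ F)\bar v$ with $v\in L^\infty(\mu)$ admits no non-trivial solutions whenever the essential values of $\varphi$ generate $\Z$. Both ingredients are classical for uniformly hyperbolic Markov systems carrying a Gibbs-type SRB measure, and follow here by quoting~\cite{LT} combined with a short geometric inspection of the first-return times from Figure~\ref{fig:leaves}.
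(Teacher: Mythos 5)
Your overall strategy (quasi-compactness plus Lemma~\ref{lem:peripheral} to reduce to an $L^\infty(\mu)$ solution $v$ of $v\circ F=e^{i\theta\varphi}v$, then rule out nontrivial solutions) starts the same way as the paper, but the way you close the argument has a genuine gap. The paper's essential step, which you skip entirely, is a regularization argument: using Lusin's theorem to write $v=\xi_n+\rho_n$ with $\xi_n\in C^1$, iterating the cocycle equation $m$ times, and using the uniform contraction of $F^m$ along stable leaves to conclude that $v=\bE_\mu(v\mid\bar x)$, i.e.\ $v$ is constant on stable leaves. Only then does the equation descend to the quotient Gibbs--Markov map $\bar F$, where \cite[Theorem 3.1]{AaronsonDenker01} kills all nontrivial solutions. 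Without some such regularity statement, your final step does not go through: for a merely measurable $v$ with $|v|=1$, the identity $\int \bar v\,(v\circ F^n)\,d\mu=\int e^{i\theta\varphi_n}\,d\mu$ obtained by ``iterating along Birkhoff sums'' gives nothing, because the left-hand side has no reason to have modulus one, so mixing of $F$ plus $\gcd\{n:\mu(\varphi=n)>0\}=1$ does not force $\theta\in2\pi\Z$. Aperiodicity of the distribution of $\varphi$ is simply not the right obstruction for a measurable coboundary equation; what is needed is either regularity of $v$ (Livsic-type, evaluated at periodic orbits) or the Gibbs--Markov rigidity on the quotient, and both require the stable-leaf constancy you omit.

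A second, related problem is that the ``black boxes'' you invoke --- ergodicity and weak mixing of $(Y,F,\mu)$ and triviality of the cocycle equation --- are exactly what this proposition is supposed to establish, and they are not available off the shelf for this piecewise-continuous hyperbolic induced map with its SRB measure. The reference \cite{LT} you quote proves the corresponding statements by precisely the quotient/Gibbs--Markov route sketched above, so quoting it as a black box is circular in spirit: you would still owe the reader the passage from $F$ to $\bar F$, which is the content of the paper's proof. If you add the Lusin-plus-contraction step showing $v$ is constant along stable leaves, your argument collapses into the paper's, and the appeal to aperiodicity becomes unnecessary.
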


\begin{proof} As the proof of the two hypotheses is essentially the same, we limit ourselves to the proof of (H5)(ii).
The proof below is a slight modification of~\cite[Proof of Proposition 7.9]{LT}, replacing  the map $F$ there with the quotient
map $\bar F$ of the map $F$ used here. We recall that $\bar F:\bar Y\to\bar Y$, where $\bar Y=Y/\sim$ with
$x\sim y$ if $x,y$ are on the same stable leaf $W^s\in\Sigma$ (more needed facts about $\bar F$   are recalled and used below).
The idea is to show that a negation of (H5)(ii) for $F$ leads to a similar statement for $\bar F$, which is known to be a contradiction,
since $\bar F$ is Gibbs Markov\footnote{see \cite[page 198]{AaronsonDenker01} for the definition of Gibbs-Markov maps}(this is recalled at the end of the present argument).

 Let $v:Y\to\C$ be a (non identically zero) measurable solution to the equation  $v\circ F=e^{i\theta\varphi}v$ 
 $\mu$-a.e.\ on $Y$, with
$\theta\in (0, 2\pi)$. By Lusin's theorem, $v$ can be approximated in $L^1(\mu)$ by a $C^0$ function, which in turn can be approximated by a
$C^\infty$ function. Hence, there exists a sequence $\xi_n$ of $C^1$ functions  such that $|\xi_n-v|_{L^1(\mu)}\to 0$, as $n\to\infty$. So,
we can write 
\[
v=\xi_n+\rho_n,
\] where $\|\rho_n\|_{L^1(\mu)}\to 0$, as $n\to\infty$.

Starting from $v=e^{-i\theta\varphi} v\circ F$ and iterating forward $m$ times 
(for some $m$ large enough to be specified later),
\begin{align*}
v=e^{-i\theta\sum_{j=0}^{m-1}\varphi\circ F^j}( \xi_n\circ F^m+\rho_n\circ F^m).
\end{align*}

Clearly,
\begin{align}\label{eq-rhon}
|e^{-i\theta\sum_{j=0}^{m-1}\varphi\circ F^j}\rho_n\circ F^m|_{L^1(\mu)}= |\rho_n\circ F^m|_{L^1(\mu)}= |\rho_n|_{L^1(\mu)}\to 0,
\end{align}
as $n\to\infty$.

Next, put $A_{n,m}:=e^{-i\theta\sum_{j=0}^{m-1}\varphi\circ F^j} \xi_n\circ F^m$ and let $d^s$ be 
the derivative in the stable direction. Note that for all $n$ and $m$
\[
|d^s A_{n,m}|\leq |d^s \xi_n|_{\infty}|d^s F^m|.
\]

By~\eqref{eq-dist}, there exists $0<\tau<1$ such that $|d^s F|=\tau$.
Hence, for any $\ve>0$ and any
$n\in\bN$, there exists $m\in\bN$ such that
\[
|d^s A_{n,m}|_\infty<\varepsilon.
\]
It is then convenient to use $\bE_\mu$ for the expectation with respect to $\mu$ and $\bE_\mu(\cdot\;|\; x)$ for the conditional expectation with respect to the $\sigma$-algebra generated by the set of admissible leaves.

Next, we note that the point $\bar x\in\bar Y$ can be written as $\bar x= W^s(x)$. 
With this notation, the previous displayed equation implies that
$|A_{n,m}(x,y)-\bE_\mu(A_{n,m}\;|\;\bar x)|\leq \varepsilon$.
For arbitrary $\psi\in L^\infty(\mu)$, we can then write

\begin{equation*}\label{eq-psiA}
\begin{split}
\bE(\psi v)&=\bE_\mu(\psi A_{n,m})+O(\ve)=\bE_\mu(\psi \bE_\mu(A_{n,m}\;|\;\bar x))+O(\varepsilon)\\
&=\bE_\mu( \bE_\mu(\psi\;|\;\bar x)A_{n,m})+O(\varepsilon)=\bE_\mu(\psi \bE_\mu( v\;|\;\bar x))+O(\ve).
\end{split}
\end{equation*}
Since $\ve$ and $\psi$ are arbitrary, it follows that $v=\bE_\mu( v\;|\;\bar x)$. 

Finally, recall the projection map $\pi: Y\to \bar Y$, $\pi\circ F=\bar F\circ \pi$ and write $\bar\varphi\circ\pi=\varphi$.
Since $v=\bE_\mu( v\;|\;\bar x)$, there exists $\bar v:\bar Y\to \C$ such that $\bar v\circ \pi$. Thus,
$v\circ F=e^{i\theta\varphi}v$ is equivalent to $\bar v\circ \bar F\circ\pi=e^{i\theta\bar\varphi\circ\pi}\bar v\circ\pi$.
So, $\bar v\circ \bar F=e^{i\theta\bar\varphi}\bar v$. But since $\bar F$ is Gibbs Markov, this
equation has only the trivial solution $v=0$ (see~\cite[Theorem 3.1]{AaronsonDenker01}).
\end{proof}

\subsection{Verifying (H4): bounds for $\|R_n\|_{\cB}$.}

\begin{lemma}\label{lem:asymptotic}
There is $C > 0$ such that $\|R_n \|_{\cB}\leq C m(\vf=n)$ for all $n \in \N$.
\end{lemma}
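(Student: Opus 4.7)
The plan is to imitate the proof of Proposition~\ref{prop-LSinequality} with $n=1$, observing that the cut-off $\Id_{\{\varphi=n\}}$ collapses the preimage sum to a single leaf. Since $\{\varphi=n\}=Y_{j_0+n}$ is a single element of the Markov partition $\cY$ and $F$ is invertible, for every $W\in\Sigma$ the intersection $F^{-1}(W)\cap\{\varphi=n\}$ is either empty or a single admissible leaf $\hat W\in\Sigma\cap Y_{j_0+n}$. Applied to this single leaf, \eqref{eq-dist3} gives
\[
\big|\,|DF|^{-1}J_{\hat W}F\,\big|_\infty\ \leq\ C\,m(Y_{j_0+n})\ =\ C\,m(\{\varphi=n\}),
\]
and this is the factor that will produce the claimed bound. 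The change of variables then reads
\[
\int_W R_n h\cdot\phi\,dm\ =\ \int_{\hat W} h\cdot(\phi\circ F)\,\frac{J_{\hat W}F}{|DF|}\,dm,
\]
so the problem is reduced to a single integral over $\hat W$ with weight of total $C^1$-size $O(m(\{\varphi=n\}))$.

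For the strong stable norm I would repeat the $(\hat\phi,\bar\phi)$ decomposition from the strong-stable portion of Proposition~\ref{prop-LSinequality} applied to $\hat W$, using \eqref{eq:weak-ly-test} to bound $\big|\tfrac{J_{\hat W}F}{|DF|}\phi\circ F\big|_{C^1(\hat W)}$ and \eqref{eq-distort1} for distortion control, yielding $\|R_n h\|_s\leq C\,m(\{\varphi=n\})\,\|h\|_{\cB}$. For the strong unstable norm I would fix $W,\tilde W\in\Sigma\cap Y_\ell$ with unique preimages $\hat W,\hat{\tilde W}\subset Y_{j_0+n}$, define the bijection $v:\tilde W\to W$ as in~\eqref{eq:v} and the auxiliary test function $\tilde\psi$ as in~\eqref{eq:psi}, and split
\[
\int_W R_n h\cdot\phi\,dm-\int_{\tilde W}R_n h\cdot\tilde\phi\,dm\ =\ S_1+S_2
\]
in exact analogy with Proposition~\ref{prop-LSinequality}. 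The piece $S_1$ is controlled by $\|h\|_u$ using $d(\hat W,\hat{\tilde W})\leq C\,d(W,\tilde W)$ (the Markov structure keeps both preimages in $Y_{j_0+n}$) together with \eqref{eq-distort1}; the piece $S_2$ is controlled by $\|h\|_{\cB_w}$ via \eqref{eq:w1}--\eqref{eq:w2}. Both carry the single factor $m(\{\varphi=n\})$, giving $\|R_n h\|_u\leq C\,m(\{\varphi=n\})\,\|h\|_{\cB}$.

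The only subtlety—rather than a genuine obstacle—is accounting. In Proposition~\ref{prop-LSinequality} the quantity $\big|\,|DF^n|^{-1}J_{W_j}F^n\,\big|_\infty$ is summed over $j$ and absorbed into a uniform constant by \eqref{eq-dist3}; here one must instead keep the single factor $\big|\,|DF|^{-1}J_{\hat W}F\,\big|_\infty$ visible as the decaying coefficient $m(\{\varphi=n\})$, with no hyperbolic gain $\lambda^{-n}$ available since $R$ is applied only once. A final density argument extends the resulting estimate from $h\in C^1$ to all $h\in\cB$, completing the verification of (H4).
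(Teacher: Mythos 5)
Your proposal is correct and follows essentially the same route as the paper: the cut-off $\Id_{\{\varphi=n\}}$ reduces $F^{-1}(W)$ to the single leaf in $Y_{j_0+n}$, \eqref{eq-dist3} supplies the factor $m(\varphi=n)$, and the unstable norm is handled by the same $S_1+S_2$ splitting with $v$ and $\tilde\psi$ as in Proposition~\ref{prop-LSinequality}. The only (harmless) cosmetic differences are that the paper extracts the $m(\varphi=n)$ factor in $S_1$ from the contraction $d(W_n,\tilde W_n)\approx \frac{L_u(Y_n)}{L}d(W,\tilde W)$ rather than from the Jacobian, and bounds the strong stable norm directly against $\|h\|_s$ without redoing the $(\hat\phi,\bar\phi)$ decomposition.
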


\begin{proof}
Note that $\vf$ is constant on each element of $\cY_1$. 
Thus, given $W\in\Sigma$ we can index the leaves in $F^{-1}(W)$ as $W_n = F^{-1}(W) \cap \{ \vf = n\}$.
Let $\phi\in C^q$ be such that $|\phi|_{C^q(W)}\leq 1$.
 Using~\eqref{eq-dist3} we compute that
\begin{align*}
\Big|\int_W (R_n h)\phi\, dm\Big|&\leq 
 \left|\int_{W_n} h |\, |DF^n|^{-1} J_{W_n}F|\phi\circ  F\,dm\right|\\
&\leq \|h\|_s \left||DF^n|^{-1} J_{W_n} F \right|_{C^q(W_n)} \leq C m(\vf=n).
\end{align*}
Hence, $\|R_n \|_{s}\leq C m(\vf=n)$. By the same argument (with $q=1$), 
$\|R_n \|_{\cB_w} \leq C m(\vf=n)$.

The estimate for the unstable norm follows by the argument used in the proof of 
Proposition~\ref{prop-LSinequality} (for estimating the unstable norm). 
We sketch the argument for completeness.
Let $W,\tilde W\in \Sigma \cap Y_j$, for some $j\geq 1$. 
As above, write $W_n=F^{-1}(W)\cap \{\vf=n\}$ and $\tilde W_n = F^{-1}(\tilde W) \cap \{\vf=n\}$.
Let $|\phi|_{C^1(Y_j)} \leq 1$. Compute that (with $v_n$ and $\tilde\psi$
analogous to \eqref{eq:v} and \eqref{eq:psi})
\begin{align*}
\Big|\int_{W} (R_nh)\phi\, dm -\int_{\tilde W} (R_nh)\phi\, dm\Big| 
\le& \Big| \int_{W_n}h\frac{J_{W_n}F}{|DF|}\phi\circ  F \, dm
-\int_{\tilde W_n} h\frac{J_{W_n}F}{|DF|}\tilde\psi\circ  F \, dm\Big| \\
&+ \Big|\int_{\tilde W_n} \frac{J_{W_n}F \circ v_n}{|DF|}
h (\tilde\phi - \tilde\psi)\circ  F\Big| \, dm = S_1+S_2.
\end{align*}
By the argument used  in the proof of Proposition~\ref{prop-LSinequality} together with \eqref{eq-dist3},
\begin{align*}
S_1 &\leq \|h\|_u d(W_n,\tilde W_n) | |DF|^{-1} J_{W_n}F|_\infty
\leq C \|h\|_u m(\vf=n)d(W, \tilde W),
\end{align*}
where we have used that $\frac{d(W_n,\tilde W_n)}{d(W, \tilde W)} \approx \frac{L_u(Y_n)}{L} 
\leq Cm(\vf = n)$, the last inequality being guaranteed by~\eqref{eq-dist3}.
Finally, using the analogues of \eqref{eq:w1} and \eqref{eq:w2} (for a single term in each), we find
\begin{eqnarray*}
\int_{\tilde W_n} \frac{J_{W_n}F \circ v_n}{|DF|} h (\tilde\phi - \tilde\psi)\circ  F \, dm
&\leq& C \| h \|_{\cB_w} |  J_{\tilde W_n}F |DF|^{-1}|_\infty |\tilde \phi-\phi \circ v|_{C^1(\tilde W)}\\
&&+ \, C  \| h \|_{\cB_w} |  J_{\tilde W_n}F |DF|^{-1}|_\infty d(W,\tilde W)\\
\leq& 2C  \| h \|_{\cB_w} \mu(\varphi = n) d(W, \tilde W).
\end{eqnarray*}
Combining these estimates we find $S_2\leq C\|h\|_{cB_w} m(\vf=n) d(W,\tilde W)$, 
ending the proof.~\end{proof}

\end{document}